  \theoremstyle{plain}
  \newtheorem{thm}{Theorem}[section]
  \newtheorem{prop}[thm]{Proposition}
  \newtheorem{lem}[thm]{Lemma}
  \newtheorem{defi}[thm]{Definition}
  \newtheorem{remark}[thm]{Remark}
\DeclareMathOperator{\tr}{Tr}
\DeclareMathOperator{\ric}{Ric}
\DeclareMathOperator{\diam}{Diam}
\DeclareMathOperator{\vol}{vol}
\DeclareMathOperator{\ve}{\varepsilon}
\DeclareMathOperator{\bino}{bin}
\DeclareMathOperator{\supp}{Supp}
\DeclareMathOperator{\degre}{deg}
\DeclareMathOperator{\EG}{ E \Gamma}
\DeclareMathOperator{\SEG}{SE\Gamma}
\DeclareMathOperator{\im}{im}
\DeclareMathOperator{\grad}{grad}
\title{$W_{1,+}$-interpolation of probability measures on graphs}
\author{Erwan Hillion \footnote{Department of Mathematics, University of Luxembourg, erwan.hillion@uni.lu}}
\begin{document}

\maketitle 

\begin{abstract}
We generalize an equation introduced by Benamou and Brenier in ~\cite{BBOriginal} and characterizing Wasserstein $W_p$-geodesics for $p>1$, from the continuous setting of probability distributions on a Riemannian manifold to the discrete setting of probability distributions on a general graph. \\
Given an initial and a final distributions $(f_0(x))_{x \in G}$, $(f_1(x))_{x \in G}$, we prove the existence of a curve $(f_t(k))_{t \in [0,1], k \in \mathbb{Z}}$ satisfying this Benamou-Brenier equation. We also show that such a curve can be described as a mixture of binomial distributions with respect to a coupling that is solution of a certain optimization problem. 
\end{abstract}

\section{Introduction}

Given some $p \geq 1$, we consider the space $\mathcal{P}_p(X)$ of probability distributions over a metric space $(X,d)$ having a finite $p$-th moment. On this space we define the Wasserstein distance $W_p$ by
\begin{equation}\label{eq:MKproblem}
W_p(\mu_0,\mu_1)^p := \inf_{\pi \in \Pi(\mu_0,\mu_1)} \int_{X \times X} d(x_0,x_1)^p d\pi(x_0,x_1),
\end{equation}
where the set $\Pi(\mu_0,\mu_1)$ is the set of couplings of $\mu_0$ and $\mu_1$, i.e. the set of probability distributions $\pi$ on $X \times X$ having $\mu_0$ and $\mu_1$ as marginals. 

\medskip

An comprehensive study of the minimization problem~\eqref{eq:MKproblem}, called Monge-Kantorovitch problem, can be found in Villani's textbooks~\cite{VillaniBook1} and \cite{VillaniBook2}. Let us recall what is important for our purposes: under very general assumptions, it is possible to prove the existence of a minimizer $\pi \in \Pi(\mu_0,\mu_1)$ for problem~\eqref{eq:MKproblem}, called optimal coupling, and that $W_p$ is indeed a metric on $\mathcal{P}_p(X)$. Moreover, if we suppose that $(X,d)$ is a geodesic space, i.e. if the distance $d(x_0,x_1)$ is exactly the length of the shortest curve joining $x_0$ to $x_1$, then the metric space $(\mathcal{P}_p(X),W_p)$ is also a geodesic space. In particular, each couple $\mu_0,\mu_1 \in \mathcal{P}_2(X)$ can be joined curve $(\mu_t)_{t \in [0,1]}$ of minimal length for $W_2$, called $W_2$-Wasserstein geodesic. 

\medskip

In their seminal papers \cite{SturmRicci01}, \cite{SturmRicci02} and \cite{LottVillani}, Sturm and independently Lott and Villani studied the links between the geometry of a measured geodesic space $(X,d,\nu)$ and the behaviour of the entropy functional along the $W_2$-Wasserstein geodesics on $\mathcal{P}_2(X)$. For instance, $(X,d,\nu)$ is said to satisfy the curvature condition $CD(K,\infty)$ for some $K \in \mathbb{R}$ if for each couple of probability distributions $\mu_0,\mu_1 \in \mathcal{P}_2(X)$ there exists a $W_2$-geodesic $(\mu_t)_{t \in [0,1]}$ such that
\begin{equation} \label{eq:CDKinfty}
\forall t \in [0,1] \ ,\ H_\nu(\mu_t) \leq (1-t) H_\nu(\mu_0)+ tH_\nu(\mu_1) - K\frac{t(1-t)}{2} W_2(\mu_0,\mu_1)^2,
\end{equation}
where the relative entropy functional $H_\nu(\cdot)$ is defined by 
\begin{equation} \label{eq:DefEntropyCont}
H_\nu(\rho \nu) := \int_X \rho(x) \log(\rho(x)) d\nu(x)
\end{equation} 
if $\mu = \rho \nu$ for some density $\rho$, and by $H_\nu(\mu):=\infty$ otherwise.

\medskip

If the measured geodesic space $(X,d,\nu)$ is a compact Riemannian manifold with its usual distance an normalized volume measure, the curvature condition $CD(K,\infty)$ is shown to be equivalent to the bound $\ric \geq K$ on the Ricci curvature tensor. Another important property is the stability of the condition $CD(K,\infty)$ under measured Gromov-Hausdorff convergence. 

\medskip

Moreover, if $CD(K,\infty)$ is satisfied for some $K > 0$, one can prove functional inequalities on $(X,d,\nu)$ such as the logarithmic Sobolev inequality, which asserts that
\begin{equation} \label{eq:LSIcont}
H_\nu(f d\nu) \leq \frac{1}{2K} \int_X \frac{|\nabla^{-} f|^2}{f} d\nu, 
\end{equation}
for any Lipschitz probability density $f$ and where $|\nabla^{-} f|$ is to be seen as a particular form of the norm of a gradient. As a corollary, it can be shown that under the condition $CD(K,\infty)$ for $K > 0$ a Poincar\'e inequality holds: for any Lipschitz funtion $h: X \rightarrow \mathbb{R}$ such that $\int_X h d\nu =0$, we have
\begin{equation}
\int_X h^2 d\nu \leq \frac{1}{2K} \int_X |\nabla^{-} h|^2 d\nu.  
\end{equation}

Since the pioneering works of Sturm and Lott-Villani, the theory of measured geodesic spaces satisfying $CD(K,\infty)$ has been thoroughly studied in a large number of papers, among which the most impressive are the works by Ambrosio, Gigli and Savar\'e (see for instance~\cite{AmbrosioGigliSavare}) and by Erbar, Kuwada and Sturm (\cite{ErbarKuwadaSturm}).

\medskip

Several obstacles prevent us from a direct generalization of Sturm-Lott-Villani theory to the framework of discrete metric spaces. Indeed, if $(X,d)$ is a graph with its usual distance, equation~\eqref{eq:MKproblem} still defines a metric on the space $\mathcal{P}_p(X)$, but if $p > 1$ then the length of non-trivial curves in $(\mathcal{P}(X),W_p)$ is $+ \infty$, which means that it is not a geodesic space. In particular, Wasserstein $W_2$-geodesics do not exist in general. 

\medskip

Several solutions have been proposed to overcome this difficulty, and there are now many different definitions of Ricci curvature bounds on discrete spaces. The most notable of them are the coarse Ricci curvature, defined by Ollivier in~\cite{OllivierRicci}, and the Erbar-Maas curvature, defined in~\cite{ErbarMaas}. The latter is based on the study of the gradient flow of the entropy and present some similarities with our own approach.

\medskip

In this paper, we place ourselves in the framework of a connected and locally finite graph $G$, endowed with its usual graph distance and the counting measure as the reference measure. In this framework, a probability distribution will be denoted by its density, i.e. by a function $f : G \rightarrow \mathbb{R}_+$ suwh that $\sum_{x \in G}f(x)=1$. Given two probability distributions $f_0$ and $f_1$ on $G$, we investigate the question of the generalization of the notion of $W_p$-geodesic joining $f_0$ to $f_1$ in a setting where such a curve does not exist. Our goal is to provide a way to chose, among the set of all $W_1$-geodesics joining $f_0$ to $f_1$, a curve which shares some properties satisfied by $W_p$-geodesics for $p >1$. Such curves will be called $W_{1,+}$ geodesics on the graph $G$.

\medskip

This article is to be seen as the first of a two-paper research work. A following article will investigate the convexity properties of the entropy functional along those particular $W_1$ geodesics, in the view of obtaining a discrete version of equation~\eqref{eq:CDKinfty} strong enough to imply discrete versions of log-Sobolev or Poincar\'e inequalities. This ultimate goal has to be kept in mind even in this present paper because it will motivate the definition of a $W_{1,+}$-geodesic between $f_0$ and $f_1$: along such a curve, some technical tools will allow us to give bounds on the second derivative of the entropy.

\medskip

Our starting point is the article~\cite{BenamouBrenier}, by Benamou and Brenier. In this paper, the authors reformulate the Monge-Kantorovitch problem in terms of velocity fields and prove the following:

\begin{thm} \label{th:WpBB}
Let $\mu_0, \mu_1$ be two probability distributions on a Riemannian manifold $(M,g)$ and $p > 1$. Then
\begin{equation} \label{eq:BBproblem}
W_p(\mu_0,\mu_1)^p = \inf \int_M \int_0^1 |v_t(x)|^p d\mu_t(x) dt,
\end{equation}
the infimum being taken over the families of probability distributions $(\mu_t) := (f_t d\vol)$ joining $\mu_0$ to $\mu_1$ and all velocity fields $(v_t(x))$ satisfying
\begin{equation*}\label{eq:TransportEquationCont}
\frac{\partial}{\partial t} f_t(x) = - \nabla \cdot (f_t(x) v_t(x)),
\end{equation*}
where $\nabla \cdot$ is the divergence operator on $M$. Moreover the minimizing curve $(\mu_t)_{t \in [0,1]}$ is the $W_p$-geodesic joining $\mu_0$ to $\mu_1$.
\end{thm}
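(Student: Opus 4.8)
The plan is to prove the two inequalities separately, writing $\mathcal{A}(\mu,v) := \int_M \int_0^1 |v_t(x)|^p \, d\mu_t(x) \, dt$ for the action of an admissible pair $(\mu_t,v_t)$ and showing that $\inf \mathcal{A} = W_p(\mu_0,\mu_1)^p$. The inequality $\inf \mathcal{A} \geq W_p(\mu_0,\mu_1)^p$ will follow from the metric structure of Wasserstein space, while the reverse inequality $\inf \mathcal{A} \leq W_p(\mu_0,\mu_1)^p$ will be obtained by exhibiting an explicit minimizing curve built from an optimal transport map.

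For the lower bound, I would first show that any pair $(\mu_t,v_t)$ solving the continuity equation $\partial_t f_t = - \nabla \cdot (f_t v_t)$ with $\int_0^1 \|v_t\|_{L^p(\mu_t)}\, dt < \infty$ defines an absolutely continuous curve in $(\mathcal{P}_p(M),W_p)$ whose metric derivative is controlled by $|\dot{\mu}_t| \leq \|v_t\|_{L^p(\mu_t)}$ for almost every $t$. Granting this, the length of the curve bounds the distance between its endpoints, so that
\[
W_p(\mu_0,\mu_1) \leq \int_0^1 |\dot{\mu}_t| \, dt \leq \int_0^1 \left( \int_M |v_t(x)|^p \, d\mu_t(x) \right)^{1/p} dt.
\]
An application of Jensen's inequality to the convex function $s \mapsto s^p$ on the probability space $([0,1],dt)$ then yields $W_p(\mu_0,\mu_1)^p \leq \mathcal{A}(\mu,v)$, and taking the infimum over admissible pairs gives the desired bound.

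For the upper bound, I would invoke the existence theory for optimal maps: since $p>1$ and $\mu_0 = f_0 \, d\vol$ is absolutely continuous, there is an optimal transport map $T$ with $T_\# \mu_0 = \mu_1$ and $\int_M d(x,T(x))^p \, d\mu_0(x) = W_p(\mu_0,\mu_1)^p$. I would then define the displacement interpolation $\mu_t := (T_t)_\# \mu_0$, where $T_t(x)$ follows at constant speed the minimizing geodesic from $x$ to $T(x)$, together with the velocity field $v_t$ obtained by differentiating this flow. One checks that $(\mu_t,v_t)$ solves the continuity equation and that the speed along each geodesic equals $d(x,T(x))$, so that a change of variables gives $\mathcal{A}(\mu,v) = \int_M d(x,T(x))^p \, d\mu_0(x) = W_p(\mu_0,\mu_1)^p$. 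Since the lower bound forces every admissible pair to have action at least $W_p(\mu_0,\mu_1)^p$, this particular curve attains the infimum, and identifying it as the displacement interpolation establishes the final claim that the minimizer is the $W_p$-geodesic.

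The hard part will be the rigorous justification of the lower bound, namely the equivalence between solutions of the continuity equation and absolutely continuous curves in Wasserstein space together with the sharp control of the metric derivative; this is the content of the superposition principle and requires care with measurability and with the $L^p$-integrability of the velocity field. A secondary technical point is the regularity of the optimal map $T$ and of the associated interpolating flow, needed both to guarantee that $(\mu_t,v_t)$ is a genuine admissible pair and to validate the change of variables in the action.
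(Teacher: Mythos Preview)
The paper does not actually prove this theorem: it is quoted in the introduction as a result due to Benamou and Brenier, with the reference~\cite{BenamouBrenier}, and is used only as motivation for the discrete constructions that follow. There is therefore no ``paper's own proof'' to compare against.

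That said, your outline is the standard route to the Benamou--Brenier formula as presented, for instance, in~\cite{AmbrosioGigliSavareBook} and~\cite{VillaniBook2}: one inequality via the metric-derivative estimate $|\dot\mu_t|\le\|v_t\|_{L^p(\mu_t)}$ for solutions of the continuity equation, the other via the displacement interpolation built from an optimal transport map. You have correctly identified the genuine technical core (the equivalence between absolutely continuous curves in $(\mathcal{P}_p,W_p)$ and weak solutions of the continuity equation, with sharp control of the metric derivative) and the places where regularity must be handled carefully. One small caveat: the statement as written in the paper assumes $\mu_t=f_t\,d\mathrm{vol}$, so the existence of an optimal map is available, but in full generality one should work with optimal plans and the associated dynamical formulation rather than maps; your sketch implicitly relies on $\mu_0$ being absolutely continuous, which is consistent with how the theorem is stated here.
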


This theorem has been extended to the framework of separable Hilbert spaces by Ambrosio, Gigli and Savr\'e in~\cite{AmbrosioGigliSavareBook}.

\medskip

The strategy used by Erbar and Maas in~\cite{ErbarMaas} is based on a generalization of the minimization problem~\eqref{eq:BBproblem} in the framework of discrete Markov chains. Our approach will consist in defining a discrete version of a characterization of its solutions. More precisely, as pointed in~\cite{BenamouBrenier}, the formal optimality condition for the optimization problem~\eqref{eq:BBproblem} can be written:

\begin{equation}\label{eq:velocityCont}
\frac{\partial}{\partial t} v_t(x) = - v_t(x) \nabla \cdot v_t(x).
\end{equation} 

Another point of view on the formal optimality condition~\eqref{eq:velocityCont} is provided by writing the velocity field $(v_t(x))$ as the gradient of a family of convex functions $v_t := \grad \Phi_t$. As explained for instance in~\cite{OttoVillani}, it can be proven that such a function $\Phi$ satisfies the Hamilton-Jacobi equation
\begin{equation}\label{eq:RiemannianHJ}
\frac{\partial}{\partial t} \Phi_t + \frac{1}{2} |\nabla \Phi_t|^2 = 0.
\end{equation}
It suffices to consider the gradient of equation~\eqref{eq:RiemannianHJ} to recover equation~\eqref{eq:velocityCont}.

\medskip

The links between the convexity of the entropy $H(t)$ of $\mu_t$ and the Ricci curvature tensor on the manifold $M$ are seen on the following heuristic formula, established by Otto and Villani in~\cite{OttoVillani}:
\begin{equation}\label{eq:OttoVillaniEntropy}
H''(t) = \int_M [\tr((D^2\Phi_t)^TD^2\Phi_t)+ \ric(\grad \Phi_t,\grad \Phi_t)] d\mu_t.
\end{equation}
In particular, the non-negativity of the tensor $\ric$ easily implies that $H''(t) \geq 0$.

\medskip

The formal optimality condition~\eqref{eq:velocityCont} on velocity fields makes sense only when $v$ is regular enough. The question of the regularity of optimal couplings is a difficult topic, see for instance~\cite{AmbrosioGigliSavareBook}. However, what is important for our purposes is that~\eqref{eq:velocityCont} can be used to construct $W_2$-geodesics: if $(f_t(x))$ is a smooth family of probability densities satisfying the transport equation~\eqref{eq:TransportEquationCont} for a smooth velocity field $(v_t(x))$ satisfying the condition~\eqref{eq:velocityCont}, then the curve $(f_t(x))$ is a $W_2$-geodesic.

\medskip

In the simpler framework of the real line $\mathbb{R}$ with usual distance and Lebesgue reference measure, it is possible to give an equivalent statement of this result without introducing explicitly the velocity field.:

\begin{prop} \label{prop:BBCondCont}
Let $(f_t(x))$ be a family of smooth probability densitites on $\mathbb{R}$. We define the families of functions 
\begin{equation}\label{eq:defghCont}
g_t(x) := -\int_{-\infty}^x \frac{\partial}{\partial t} f_t(z) dz \ , \ h_t(x) := -\int_{-\infty}^x \frac{\partial}{\partial t} g_t(z) dz .
\end{equation}
We suppose that $g_t(z)>0$ and that the following one-dimensional Benamou-Brenier condition holds:
\begin{equation}\label{eq:BBCondCont}
f_t(x) h_t(x) = g_t(x)^2. 
\end{equation}
Then $(f_t(x))$ is a $W_p$-geodesic for any $p >1$.
\end{prop}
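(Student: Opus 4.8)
The plan is to recognize that $g_t$ and $h_t$ are, respectively, the momentum $f_t v_t$ and the time-derivative momentum associated with a velocity field $v_t$, so that the two hypotheses become exactly the one-dimensional transport equation~\eqref{eq:TransportEquationCont} and the optimality condition~\eqref{eq:velocityCont}; the conclusion will then follow from the characterization recalled after Theorem~\ref{th:WpBB}. First I would differentiate the definition of $g_t$ to get $\partial_x g_t = -\partial_t f_t$, and set
\[
v_t(x) := \frac{g_t(x)}{f_t(x)},
\]
which is well defined and positive since $f_t>0$ and $g_t>0$. Then $f_t v_t = g_t$, so $\partial_x(f_t v_t)=\partial_x g_t = -\partial_t f_t$; because the divergence on $\mathbb{R}$ is just $\partial_x$, this is precisely the transport equation. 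In the same way, the definition of $h_t$ yields $\partial_x h_t = -\partial_t g_t$.

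The heart of the argument is to show that the Benamou--Brenier condition $f_t h_t = g_t^2$ is equivalent to $v_t$ solving the inviscid Burgers equation $\partial_t v_t + v_t\,\partial_x v_t = 0$, which is the one-dimensional form of~\eqref{eq:velocityCont}. I would compute $\partial_t v_t$ and $v_t\,\partial_x v_t$ by the quotient rule, substitute $\partial_t f_t = -\partial_x g_t$ and $\partial_t g_t = -\partial_x h_t$, and clear the denominator $f_t^3$. The equation $\partial_t v_t + v_t\,\partial_x v_t = 0$ then reduces to
\[
-f_t^2\,\partial_x h_t + 2 f_t g_t\,\partial_x g_t - g_t^2\,\partial_x f_t = 0.
\]
Differentiating $f_t h_t = g_t^2$ in $x$ gives $2 g_t\,\partial_x g_t = h_t\,\partial_x f_t + f_t\,\partial_x h_t$; inserting this cancels the $\partial_x h_t$ terms and leaves $\partial_x f_t\,(f_t h_t - g_t^2)=0$, which holds exactly because of the hypothesis. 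This establishes the desired equivalence.

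Having produced a smooth velocity field satisfying both the transport equation and the optimality condition~\eqref{eq:velocityCont} (and which is trivially of gradient form $v_t=\grad\Phi_t$ on $\mathbb{R}$), the result recalled in the text guarantees that $(f_t)$ is a $W_2$-geodesic. To upgrade this to $W_p$ for every $p>1$, I would invoke the fact that on the real line the monotone coupling is optimal for every strictly convex cost, so that all $W_p$-geodesics with $p>1$ coincide with McCann's displacement interpolation; equivalently, integrating the characteristics of Burgers' equation shows that each mass element travels along a straight line at constant speed, which is precisely that displacement interpolation.

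I expect the main obstacle to be not the geodesic theory, which is imported from Theorem~\ref{th:WpBB} and the subsequent remark, but two bookkeeping points: (i) organizing the quotient-rule computation so that the reduction of the previous paragraph closes cleanly, and (ii) using the sign hypothesis $g_t>0$ together with smoothness on all of $[0,1]$ to ensure the Burgers solution is shock-free, so that the characteristics do not cross and the curve is a genuine geodesic rather than merely a critical point of the Benamou--Brenier functional.
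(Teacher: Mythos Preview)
Your approach is correct and is exactly the one the paper takes: the paper's entire proof is the single sentence ``it suffices to realize that equation~\eqref{eq:BBCondCont} easily implies equation~\eqref{eq:velocityCont},'' and your computation with the quotient rule is precisely the verification of that implication. Your additional remarks on upgrading from $W_2$ to $W_p$ via the monotone coupling and on the shock-free characteristics go beyond what the paper spells out, but they are consistent with its intent and do not change the strategy.
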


To prove Proposition~\ref{prop:BBCondCont}, it suffices to realize that equation~\eqref{eq:BBCondCont} easily implies equation~\eqref{eq:velocityCont}.

\medskip

Apart from regularity issues, which will not play an important role in a discrete framework, the main restriction made in the statement of Proposition~\ref{prop:BBCondCont} is the non-degeneracy condition $g_t(z)>0$. It is quite easy to prove that such a condition implies that $f_0$ is stochastically dominated by $f_1$. In the setting of graphs, we will introduce the notion of $W_1$-orientation (see Paragraph~\ref{sec:W1orient}) in order to force the function $g_t$ to stay positive.

\medskip

The main purpose of this article is to study curves in the space of probability distributions on a graph which satisfy a discrete version of the Benamou-Brenier condition~\eqref{eq:BBCondCont}.

\begin{itemize}
\item The goal of Section~\ref{sec:Definition} is to provide a generalization of equations~\eqref{eq:defghCont} and \eqref{eq:BBCondCont} to this discrete setting. We will first show that these equations can be recovered in a particular form in the case of contraction of measures. Given a couple of probability distributions $f_0, f_1$ defined on $G$, we then endow $G$ with an orientation which will allow us to give a general definition of $W_{1,+}$-geodesics on $G$. The terminology ``$W_{1,+}$-geodesic'' will be explained by considering a discrete version of problem~\eqref{eq:BBproblem} when $p>1$ is close to $1$. 
\item In Section~\ref{sec:Structure}, we are looking for necessary conditions satisfied by $W_{1,+}$-geodesics on $G$. In particular, we prove in Theorem~\ref{thm:BBCBinom} that if $f_0$ and $f_1$ are finitely supported, then any $W_{1,+}$-geodesic $(f_t)$ can be written as a mixture of binomial distributions supported on geodesics of $G$.
\item In Section~\ref{sec:Existence} we prove the existence of $W_{1,+}$-geodesics $(f_t)$ with prescribed initial and final distributions $f_0$ and $f_1$. The construction of such curves suggests us strong links with the ``Entropic Interpolations'' studied in a recent series of papers by L\'eonard.
\end{itemize}

\section{The discrete Benamou-Brenier condition}\label{sec:Definition}

In this paper, we consider a locally finite and connected graph $G$. A path $\gamma$ on $G$ of length $n=L(\gamma)$ is a collection of vertices $\gamma(0),\ldots, \gamma(n) \in G$ such that $\gamma(i) \sim \gamma(i+1)$ for every $i=0,\ldots n-1$, where the relation $x \sim y$ means that $(xy)$ is in the edge set of the graph $G$. To any path $\gamma: \{0,\ldots n\} \rightarrow G$ are associated its endpoints $e_0(\gamma):= \gamma(0)$ and $e_1(\gamma) := \gamma(n)$.

\medskip

We will use the usual graph distance on $G$: $d(x,y)$ is the length of the shortest path joining $x$ to $y$. The set of geodesics joining $x$ to $y$, denoted by $\Gamma_{x,y}$, is the set of paths $\gamma$ joining $x$ to $y$ such that $L(\gamma)= d(x,y)$. The set of all geodesics on $G$ is denoted by $\Gamma(G)$.

\medskip

A coupling $\pi \in \Pi(f_0,f_1)$ is said to be a $W_p$-optimal coupling for some $p \geq 1$ if it is a minimizer for the functional
\begin{equation}
I_p : \pi \rightarrow \sum_{x,y \in G \times G} d(x,y)^p \pi(x,y).
\end{equation}
We denote by $\Pi_p(f_0,f_1)$ the set of $W_p$-optimal couplings.

\medskip

\begin{remark} 

The equality $I_1(\alpha \pi_1 + (1-\alpha) \pi_2) = \alpha I_1(\pi_1) +(1-\alpha) I_1(\pi_2)$ proves that the set $\Pi_1(f_0,f_1)$ is a convex subset of $\Pi(f_0,f_1)$. 
\end{remark}

\subsection{Contraction of measures and the Benamou-Brenier equation}

Among early attempts to generalize particular Wasserstein geodesics to the discrete case, one important example is given by the thinning operation:

\begin{defi}
Let $f$ be a probability distribution finitely supported on $\mathbb{Z}_+$. The thinning of $f$ is the family $(T_t f)$ of probability distributions defined by 
\begin{equation}
T_t f(k) := \sum_{l \geq 0} \bino_{l,t}(k) f(l) = \sum_{l \geq 0} \binom{l}{k} t^k (1-t)^{l-k} f(l),
\end{equation}
where by convention $\binom{l}{k}=0$ if $l < 0$ or if $k \notin \{0,\ldots l\}$.
\end{defi}

The operation $f \mapsto T_t f$ is often seen as a discrete version of the operation 
\begin{equation} \label{eq:thinningR} 
f(x) \mapsto f_t(x) := \frac{1}{t} f\left(\frac{x}{t}\right),
\end{equation} 
and is for instance used to state a weak law of small numbers (see~\cite{JohnsonSmallNumbers}) about the limit in distribution of $T_{1/n} (f^{\star n})$ when $n \rightarrow  \infty$. 

\medskip

We know that, given a smooth probability density $f$ on $\mathbb{R}$, the family $(f_t)$ defined by equation~\eqref{eq:thinningR} is a $W_p$-geodesic for any $p \geq 1$. According to Sturm-Lott-Villani theory, the metric space $(\mathbb{R},|\cdot |)$ satisfies the condition $CD(0,\infty)$, so the entropy $H(t)$ of $f_t$ with respect to the Lebesgue measure is a convex function of $t$. On the other hand, a theorem by Johnson and Yu (see~\cite{JohnsonEntropyThinning}) asserts that the entropy of the thinning $T_t f$ is also a convex function of $t$. The proof of this theorem given in~\cite{HillionContraction} relies on the following:

\begin{prop}\label{prop:BBCthinning}
Let $(f_t) := (T_t f)$ be the thinning family associated to a probability distribution $f = f_1$ supported on $\mathbb{Z}_+$. We define the families of functions $(g_t)$ and $(h_t)$ by
\begin{equation}\label{eq:defgh}
g_t(k) := -\sum_{l \leq k} \frac{\partial}{\partial t}f_t(l) \ , \ h_t(k) := -\sum_{l \leq k} \frac{\partial}{\partial t}g_t(l).
\end{equation}
The triple $(f_t,g_t,h_t)$ then satisfies the discrete Benamou-Brenier equation:
\begin{equation}\label{eq:BBCondZ}
\forall k \in \mathbb{Z} \ , \ f_t(k) h_t(k-1) = g_t(k) g_t(k-1).
\end{equation}
Moreover, $g_t(k) \geq 0$, and if $g_t(k)=0$ then either $f_t(k+1)=0$ or $h_t(k)=0$.
\end{prop}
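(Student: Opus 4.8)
The plan is to first compute the families $(g_t)$ and $(h_t)$ in closed form. The engine is the elementary derivative identity for the binomial weights,
\[
\frac{\partial}{\partial t}\bino_{l,t}(k) = l\left(\bino_{l-1,t}(k-1) - \bino_{l-1,t}(k)\right),
\]
which one checks directly from $\bino_{l,t}(k) = \binom{l}{k}t^k(1-t)^{l-k}$ using $l\binom{l-1}{k-1} = k\binom{l}{k}$ and $l\binom{l-1}{k} = (l-k)\binom{l}{k}$. Substituting this into $\frac{\partial}{\partial t} f_t(j) = \sum_l \frac{\partial}{\partial t}\bino_{l,t}(j)\, f(l)$ and summing over $j\leq k$, the inner differences telescope (the boundary contribution at $-\infty$ vanishes since $\bino_{l-1,t}(j)=0$ for $j<0$), leaving
\[
g_t(k) = \sum_{l} l\,\bino_{l-1,t}(k)\,f(l).
\]
Applying the same identity and telescoping once more gives $h_t(k) = \sum_l l(l-1)\,\bino_{l-2,t}(k)\,f(l)$. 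Thus $g_t$ and $h_t$ are themselves ``shifted'' thinnings, of the measures $l\,f(l)$ and $l(l-1)f(l)$ respectively. The nonnegativity $g_t(k)\geq 0$ is then immediate, since every summand $l\,\bino_{l-1,t}(k)f(l)$ is nonnegative.

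For the Benamou-Brenier equation I would expand both sides as double sums over the support indices $l,m$ of $f$:
\[
f_t(k)h_t(k-1) = \sum_{l,m} \bino_{l,t}(k)\,m(m-1)\bino_{m-2,t}(k-1)\,f(l)f(m),
\]
\[
g_t(k)g_t(k-1) = \sum_{l,m} l\,\bino_{l-1,t}(k)\,m\,\bino_{m-1,t}(k-1)\,f(l)f(m).
\]
The whole statement then reduces to the pointwise combinatorial identity
\[
\bino_{l,t}(k)\,m(m-1)\bino_{m-2,t}(k-1) = m\,\bino_{m-1,t}(k)\,l\,\bino_{l-1,t}(k-1),
\]
after which the two double sums coincide by relabelling $l \leftrightarrow m$ (the weight $f(l)f(m)$ being symmetric). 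To prove this identity one factors the common monomial $t^{2k-1}(1-t)^{l+m-2k-1}$ out of all four weights; writing the surviving binomial coefficients as factorials, each side collapses to the single expression $\frac{l!\,m!}{k!\,(k-1)!\,(l-k)!\,(m-1-k)!}$, with the convention $1/n!=0$ for $n<0$ making the degenerate ranges consistent. This factorial bookkeeping is the one genuinely computational, and hence the main, step; everything around it is formal telescoping and relabelling.

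Finally, the degeneracy statement drops out of the Benamou-Brenier equation itself, with no separate argument: replacing $k$ by $k+1$ in $f_t(k)h_t(k-1)=g_t(k)g_t(k-1)$ gives $f_t(k+1)h_t(k) = g_t(k+1)g_t(k)$, so whenever $g_t(k)=0$ the right-hand side vanishes, forcing $f_t(k+1)=0$ or $h_t(k)=0$.
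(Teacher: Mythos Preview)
Your proof is correct. The derivative identity for binomial weights, the telescoping that yields $g_t(k)=\sum_l l\,\bino_{l-1,t}(k)f(l)$ and $h_t(k)=\sum_l l(l-1)\bino_{l-2,t}(k)f(l)$, the factorial identity (both sides indeed collapse to $\dfrac{l!\,m!}{k!(k-1)!(l-k)!(m-1-k)!}$), and the relabelling argument all check out; the degeneracy clause is, as you note, an immediate consequence of the equation itself at the shifted index.

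As for comparison: the present paper does not actually prove Proposition~\ref{prop:BBCthinning}; it only states the result and refers to \cite{HillionContraction} for the argument. Your approach is in fact the natural one in that setting as well --- the closed forms for $g_t$ and $h_t$ as ``shifted thinnings'' are the standard way this is organized --- so there is no meaningful divergence to report.
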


\begin{remark} Denoting by $\nabla_1$ (resp. $\nabla_2$) the left derivative operator (resp. the left second derivative operator) defined by $\nabla_1 u(k) := u(k)-u(k-1)$ (resp. $\nabla_2 u(k) = u(k)-2u(k-1)+u(k-2)$), we thus have
\begin{equation*}
\frac{\partial f_t(k)}{\partial t} = -\nabla_1 g_t(k) \ ,\ \frac{\partial^2 f_t(k)}{\partial t^2} = \nabla_2 h_t(k).
\end{equation*}
\end{remark}

The proof of the convexity of the entropy along thinning families relies so importantly on Proposition~\ref{prop:BBCthinning} that this proof can be used \textit{verbatim} to prove a stronger statement:

\begin{prop} \label{thm:MBBCEntropy}
Let $(f_t)$ be a family of finitely supported probability distributions on $\mathbb{Z}$. We suppose that the families of functions $(g_t)$ and $(h_t)$, defined by equation~\eqref{eq:defgh}, satisfy the discrete Benamou-Brenier equation~\eqref{eq:BBCondZ} and the non-negativity condition $g_t(k) \geq 0$. Then the entropy $H(t)$ of $f_t$ is a convex function of $t$.
\end{prop}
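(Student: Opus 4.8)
The plan is to differentiate the entropy $H(t) := \sum_{k \in \mathbb{Z}} f_t(k)\log f_t(k)$ twice in $t$ and to show that the resulting expression is non-negative, using the two difference relations recorded in the remark after Proposition~\ref{prop:BBCthinning}, namely $\partial_t f_t(k) = -\nabla_1 g_t(k)$ and $\partial_t^2 f_t(k) = \nabla_2 h_t(k)$, together with the Benamou-Brenier equation~\eqref{eq:BBCondZ}. Since the distributions are finitely supported, all sums are finite, so differentiation under the sum and discrete summations by parts (Abel's transform) are justified with no boundary terms.

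First I would compute $H'(t)$. Writing $\partial_t[f\log f] = (\partial_t f)(\log f + 1)$ and using $\sum_k \partial_t f_t(k) = \partial_t \sum_k f_t(k) = 0$, the constant term drops, so $H'(t) = \sum_k \partial_t f_t(k)\log f_t(k) = -\sum_k \nabla_1 g_t(k)\,\log f_t(k)$. A summation by parts then gives the clean form $H'(t) = \sum_k g_t(k)\log\frac{f_t(k+1)}{f_t(k)}$. Differentiating $\partial_t[f\log f] = (\partial_t f)(\log f+1)$ once more and summing yields, after again discarding $\sum_k \partial_t^2 f_t(k)=0$,
\begin{equation*}
H''(t) = \underbrace{\sum_k \frac{(\partial_t f_t(k))^2}{f_t(k)}}_{=:B} + \underbrace{\sum_k \partial_t^2 f_t(k)\,\log f_t(k)}_{=:A}.
\end{equation*}
The term $B = \sum_k (g_t(k)-g_t(k-1))^2/f_t(k)$ is manifestly non-negative, being a discrete Fisher-information. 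For $A$, I substitute $\partial_t^2 f_t(k) = \nabla_2 h_t(k)$ and apply summation by parts twice to transfer the two differences onto $\log f_t$, obtaining $A = \sum_k h_t(k-1)\,\log\frac{f_t(k+1)f_t(k-1)}{f_t(k)^2}$. At this point the Benamou-Brenier equation enters in two ways: it shows $h_t(k-1) = g_t(k)g_t(k-1)/f_t(k) \ge 0$ on the support, so each weight in $A$ is non-negative, and it rewrites $A$ purely in terms of $f_t$ and $g_t$.

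The main obstacle is the last step, proving $A + B \ge 0$. The difficulty is genuine: the summand of $A$ carries a second difference of $\log f_t$, which is negative precisely where $f_t$ is locally log-concave, so $A$ need not be non-negative on its own and no termwise estimate suffices. In particular a naive tangent-line bound such as $\log u \ge 1 - 1/u$ is too lossy and does not reproduce the exact square $\int \frac1f\bigl(\partial_x g - g\,\partial_x f/f\bigr)^2$ that one obtains in the continuous computation. The non-negativity must instead be extracted globally: I would combine $A$ and $B$ after a further summation by parts and use a sharp convexity inequality for the logarithm, equivalently the logarithmic-mean representation $\log\frac{f_t(k+1)}{f_t(k)} = (f_t(k+1)-f_t(k))/\Lambda(f_t(k+1),f_t(k))$, so that the contributions reorganize, with $g_t \ge 0$ keeping every inequality pointing the same way, into a sum of squares $\sum_k \frac{1}{f_t(k)}(\cdots)^2$. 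Checking that this reorganization closes, i.e. that the cross terms produced by the logarithmic differences cancel against those coming from $B$, is the technical heart of the argument; it is exactly the computation performed for the thinning family in~\cite{HillionContraction}, which applies here \emph{verbatim} once~\eqref{eq:BBCondZ} and $g_t \ge 0$ are assumed.
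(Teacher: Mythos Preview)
Your proposal is correct and follows exactly the route the paper takes: the paper does not give an independent proof of this proposition but simply observes that the entropy-convexity computation for the thinning family in~\cite{HillionContraction} uses only~\eqref{eq:BBCondZ} and $g_t\ge 0$, and hence applies \emph{verbatim}. Your write-up supplies the standard setup ($H''=A+B$ via summation by parts, with $h_t(k-1)\ge 0$ from~\eqref{eq:BBCondZ}) and then makes the same deferral to~\cite{HillionContraction} for the closing estimate, which is precisely what the paper does.
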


Because the similarities with equation~\eqref{eq:BBCondCont}, it seems legitimate to consider a family of measures satisfying equation~\eqref{eq:BBCondZ} and the non-negativity condition $g_t(k) \geq 0$ as a pseudo $W_p$-geodesic, for $p > 1$, along which the entropy functional is convex, which is reminiscent of Sturm-Lott-Villani theory.

\medskip

The notion of thinning has been extended in~\cite{HillionContraction} to the setting of general graphs in the following way: we consider a probability distribution $f_1$ defined on $G$ and another probability measure $f_0$ which is a Dirac mass at a given point $o \in G$. In this case, an interpolating curve $(f_t)$, called contraction of $f_1$ on $o$, is defined as a mixture of binomial distributions by
\begin{equation}
f_t := \sum_{z \in G} \frac{1}{|\Gamma_{o,z}|} \sum_{\gamma \in \Gamma(o,z)} \bino_{\gamma,t},
\end{equation}
where the binomial distribution on $\gamma$ is related to the classical binomial distribution by
\begin{equation}
\forall p \in \{0,\ldots L(\gamma)\} \ , \ \bino_{\gamma,t}(\gamma(p)) := \bino_{L(\gamma),t}(p)
\end{equation}
and where $|\Gamma_{o,z}|$ denotes the cardinality of the set $\Gamma_{o,z}$ of geodesics joining $o$ to $z$.

\medskip

A couple of initial and final distributions $\delta_{o}=f_0$ and $f_1$ being given, we define a partial order on the set of vertices of $G$ by writing $x_1 \leq x_2$ if the vertex $x_1$ belongs to a geodesic $\gamma \in \Gamma_{o,x_2}$. If $x_1 \sim x_2$ and $x_1 \leq x_2$, we say that $(x_1x_2)$ is an oriented edge and we write $x_2 \in \mathcal{F}(x_1)$, $x_1 \in \mathcal{E}(x_2)$ or $x_1 \rightarrow x_2$.

\medskip

To the oriented graph $(G,\rightarrow)$ are associated two other oriented graphs:

\begin{defi}
The oriented edge graph $(E(G),\rightarrow)$ is the graph of oriented couples $x_1 \rightarrow x_2 \in G$, oriented itself by the relation $(x_1x_2) \rightarrow (x_2x_3)$. In particular, for any $(x_1x_2) \in (E(G),\rightarrow)$ we have 
\begin{equation*}
\mathcal{E}((x_1x_2)) = \{ (x_0x_1) : x_0 \in \mathcal{E}(x_1) \} \ , \ \mathcal{F}((x_1x_2)) = \{ (x_2x_3) : x_3 \in \mathcal{F}(x_2) \}.
\end{equation*}
Similarly, we define the graph of oriented triples $(T(G),\rightarrow) := (E(E(G)),\rightarrow)$, having as vertices the triples $(x_1x_2x_3)$ with $x_1 \rightarrow x_2 \rightarrow x_3$ and edges between each couple $(x_0x_1x_2)$ and $(x_1x_2x_3)$.
\end{defi}      

\begin{remark}
The graph $G$ being now oriented, the notations $E(G)$ and $T(G)$ stand for $(E(G),\rightarrow)$ and $(T(G),\rightarrow)$, which is a slight abuse of notation. For instance, $(xy) \in E(G)$ imply that $x \rightarrow y$. This remark will still be valid once introduced the $W_1$-orientation on $G$.
\end{remark}

Orienting the graph $G$ allows us to define a divergence operator:

\begin{defi}\label{def:DivergenceOriented}
The divergence of a function $g : E(G) \rightarrow \mathbb{R}$ is the function $\nabla g : G \rightarrow \mathbb{R}$ defined by $$\forall x_1 \in G \ , \ \nabla g(x_1) := \sum_{x_2 \in \mathcal{F}(x_1)} g(x_1x_2) - \sum_{x_0 \in \mathcal{E}(x_1)} g(x_0x_1).$$
Similarly, the divergence of a function $h : T(G) \rightarrow \mathbb{R}$ is the function $\nabla h : E(G) \rightarrow \mathbb{R}$ defined by $$\nabla  h(x_1x_2) := \sum_{(x_2x_3) \in \mathcal{F}(x_1x_2)} h(x_1x_2x_3)-\sum_{(x_0x_1) \in \mathcal{E}(x_1x_2)} h(x_0x_1x_2).$$
\end{defi}

\medskip

We use this orientation to express the function $f_t$ as a product of two functions satisfying interesting differential equations:

\begin{prop}\label{prop:ftutvt}
There exists a couple $(P_t), (Q_t)$ of families of non-negative functions on $G$ such that:
\begin{enumerate}
\item We have $f_t(x)=P_t(x)Q_t(x)$.
\item The functions $P$ and $Q$ satisfy the equations
\begin{equation}
\frac{\partial}{\partial t} P_t(x_1) = \sum_{x_0 \in \mathcal{E}(x_1)} P_t(x_0) \ , \ \frac{\partial}{\partial t} Q_t(x_1) = - \sum_{x_2 \in \mathcal{F}(x_1)} Q_t(x_2).
\end{equation}
\end{enumerate}
\end{prop}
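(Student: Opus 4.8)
The plan is to construct $P_t$ and $Q_t$ explicitly, guided by the heuristic that $P_t$ should only record the portion of a geodesic already travelled from $o$ (hence the forward, predecessor-driven equation), while $Q_t$ records the portion still to be travelled (hence the backward, successor-driven equation). Writing $n := d(o,x)$, I would set
\[ P_t(x) := \frac{|\Gamma_{o,x}|}{n!}\, t^{n}. \]
Non-negativity on $[0,1]$ is immediate, and the forward equation can be checked by direct differentiation: $\partial_t P_t(x_1) = \frac{|\Gamma_{o,x_1}|}{(n-1)!}t^{n-1}$, while the prescribed right-hand side is $\sum_{x_0 \in \mathcal{E}(x_1)} \frac{|\Gamma_{o,x_0}|}{(n-1)!}t^{n-1}$, since each $x_0 \in \mathcal{E}(x_1)$ satisfies $d(o,x_0)=n-1$. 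The two agree because every geodesic from $o$ to $x_1$ has a unique penultimate vertex in $\mathcal{E}(x_1)$, giving $|\Gamma_{o,x_1}| = \sum_{x_0 \in \mathcal{E}(x_1)} |\Gamma_{o,x_0}|$; for $x_1 = o$ both sides vanish.

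Next I would let the factorisation $f_t = P_t Q_t$ dictate $Q_t$. Expanding the contraction and using that each geodesic from $o$ to a vertex $z \geq x$ passing through $x$ splits uniquely into a geodesic $o \to x$ followed by a geodesic $x \to z$, one gets, with $m := d(o,z)$ and $k := d(x,z) = m-n$,
\[ f_t(x) = \sum_{z \geq x} \frac{f_1(z)}{|\Gamma_{o,z}|}\, |\Gamma_{o,x}|\, |\Gamma_{x,z}|\, \binom{m}{n}\, t^{n}(1-t)^{k}. \]
Dividing formally by the monomial $P_t(x)$ suggests
\[ Q_t(x) := \sum_{z \geq x} \frac{f_1(z)}{|\Gamma_{o,z}|}\, |\Gamma_{x,z}|\, \frac{m!}{k!}\,(1-t)^{k}. \]
I would adopt this polynomial in $t$ as the \emph{definition} of $Q_t$, rather than the literal quotient $f_t/P_t$ (which is $0/0$ at $t=0$ for $x \neq o$); then $P_t(x)Q_t(x) = f_t(x)$ holds as an identity of polynomials, because $\frac{1}{n!}\frac{m!}{k!} = \binom{m}{n}$, and $Q_t(x) \geq 0$ on $[0,1]$.

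It remains to verify the backward equation for $Q$, which is the crux. Differentiating term by term gives $\partial_t Q_t(x_1) = -\sum_{z > x_1} \frac{f_1(z)}{|\Gamma_{o,z}|}\, |\Gamma_{x_1,z}|\, \frac{m!}{(k-1)!}(1-t)^{k-1}$, whereas $-\sum_{x_2 \in \mathcal{F}(x_1)} Q_t(x_2)$, after interchanging the two summations, reads $-\sum_{z} \frac{f_1(z)}{|\Gamma_{o,z}|}\frac{m!}{(k-1)!}(1-t)^{k-1}\bigl(\sum_{x_2} |\Gamma_{x_2,z}|\bigr)$, the inner sum running over $x_2 \in \mathcal{F}(x_1)$ with $x_2 \leq z$. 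The two expressions coincide once one has the index shift $d(x_2,z)=d(x_1,z)-1$ and the combinatorial identity
\[ |\Gamma_{x_1,z}| = \sum_{x_2 \in \mathcal{F}(x_1),\, x_2 \leq z} |\Gamma_{x_2,z}| \qquad (z > x_1). \]
This last point is where I expect the real work to lie: I would prove it by showing that prepending the oriented edge $(x_1x_2)$ is a bijection from geodesics $x_2 \to z$ (over admissible $x_2$) onto geodesics $x_1 \to z$, which rests on the additivity $d(o,z) = d(o,x_1)+d(x_1,z)$ forcing the second vertex of any geodesic $x_1 \to z$ to be simultaneously an element of $\mathcal{F}(x_1)$ and $\leq z$. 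The main obstacle is thus the orientation and partial-order bookkeeping; once it is settled, both differential equations hold and the factorisation $f_t=P_tQ_t$ with $P_t,Q_t\geq 0$ completes the proof.
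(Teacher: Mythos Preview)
The paper does not actually prove this proposition: immediately after stating it, the text reads ``This proposition is proven in~\cite{HillionContraction}.'' So there is no in-paper proof to compare against.

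Your argument is correct and is precisely the natural explicit construction one would expect in the cited reference. The definitions
\[
P_t(x)=\frac{|\Gamma_{o,x}|}{d(o,x)!}\,t^{d(o,x)},\qquad
Q_t(x)=\sum_{z\ge x}\frac{f_1(z)}{|\Gamma_{o,z}|}\,|\Gamma_{x,z}|\,\frac{d(o,z)!}{d(x,z)!}\,(1-t)^{d(x,z)}
\]
are exactly right, and the two combinatorial identities you isolate,
\[
|\Gamma_{o,x_1}|=\sum_{x_0\in\mathcal{E}(x_1)}|\Gamma_{o,x_0}|,\qquad
|\Gamma_{x_1,z}|=\sum_{\substack{x_2\in\mathcal{F}(x_1)\\ x_2\le z}}|\Gamma_{x_2,z}|\ \ (z>x_1),
\]
are indeed the whole point. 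Your justification of the second one is sound: if $x_1\le z$ and $\gamma$ is any geodesic from $x_1$ to $z$, then $d(o,\gamma(1))\le d(o,x_1)+1$ and $d(o,\gamma(1))\ge d(o,z)-d(\gamma(1),z)=d(o,x_1)+1$, so $\gamma(1)\in\mathcal{F}(x_1)$ and $\gamma(1)\le z$; conversely, prepending the edge $(x_1x_2)$ to a geodesic $x_2\to z$ yields a path of the correct length $d(x_1,z)$. The only cosmetic point is that you might state explicitly that defining $Q_t$ as a polynomial (rather than as the quotient $f_t/P_t$) is what makes the construction valid at $t=0$, which you already do. Nothing is missing.
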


This proposition is proven in~\cite{HillionContraction}. We can now use Definition~\ref{def:DivergenceOriented} and~\ref{prop:ftutvt} to state a generalized version of Proposition~\ref{prop:BBCthinning}:

\begin{prop} \label{prop:BBContraction}
We define the families of functions $(g_t) : E(G) \rightarrow \mathbb{R}$ and $(h_t) : T(G) \rightarrow \mathbb{R}$ by
\begin{equation}
g_t(x_1x_2) := P_t(x_1)Q_t(x_2) \ , \ h_t(x_0x_1x_2) := P_t(x_0) Q_t(x_2).
\end{equation}
\begin{enumerate}
\item The functions $f$, $g$ and $h$ satisfy the differential equations
\begin{equation} \label{eq:fghContraction}
\frac{\partial}{\partial t} f_t(x_1) = - \nabla g_t(x_1) \ , \  \frac{\partial}{\partial t} g_t(x_1x_2) = - \nabla h_t(x_1x_2).
\end{equation}
\item For every oriented triple $(x_0x_1x_2) \in T(G)$ we have
\begin{equation}\label{eq:BBGraph}
h_t(x_0x_1x_2) f_t(x_1) = g_t(x_0x_1) g_t(x_1x_2).
\end{equation}
\end{enumerate}
\end{prop}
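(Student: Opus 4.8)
The plan is to verify both claims directly from the explicit product structure provided by Proposition~\ref{prop:ftutvt}, namely $f_t(x) = P_t(x) Q_t(x)$ together with the two evolution equations for $P$ and $Q$. The key observation is that the definitions $g_t(x_1x_2) = P_t(x_1) Q_t(x_2)$ and $h_t(x_0x_1x_2) = P_t(x_0) Q_t(x_2)$ are designed precisely so that the algebraic identity in part 2 is almost immediate, while part 1 reduces to a telescoping computation.

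I would begin with part 2, since it is purely algebraic and sets the tone. For an oriented triple $(x_0x_1x_2) \in T(G)$ we compute the right-hand side as
\begin{equation*}
g_t(x_0x_1) g_t(x_1x_2) = \bigl(P_t(x_0) Q_t(x_1)\bigr)\bigl(P_t(x_1) Q_t(x_2)\bigr) = \bigl(P_t(x_0) Q_t(x_2)\bigr)\bigl(P_t(x_1) Q_t(x_1)\bigr),
\end{equation*}
and the last product is exactly $h_t(x_0x_1x_2) f_t(x_1)$ by the definitions of $h_t$ and of $f_t$. Thus \eqref{eq:BBGraph} holds with no further work; the entire content of this part is that the factorization $f_t = P_t Q_t$ lets the two middle factors $Q_t(x_1)$ and $P_t(x_1)$ recombine into $f_t(x_1)$.

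For part 1, I would substitute the definitions into the divergence operator of Definition~\ref{def:DivergenceOriented} and use the evolution equations. For the first identity, $\nabla g_t(x_1) = \sum_{x_2 \in \mathcal{F}(x_1)} g_t(x_1x_2) - \sum_{x_0 \in \mathcal{E}(x_1)} g_t(x_0x_1) = P_t(x_1)\sum_{x_2 \in \mathcal{F}(x_1)} Q_t(x_2) - Q_t(x_1)\sum_{x_0 \in \mathcal{E}(x_1)} P_t(x_0)$. By the equations of Proposition~\ref{prop:ftutvt}, the first sum equals $-\partial_t Q_t(x_1)$ and the second equals $\partial_t P_t(x_1)$, so $-\nabla g_t(x_1) = P_t(x_1) \partial_t Q_t(x_1) + Q_t(x_1)\partial_t P_t(x_1) = \partial_t(P_t Q_t)(x_1) = \partial_t f_t(x_1)$ by the product rule. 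The second identity is entirely analogous: expanding $\nabla h_t(x_1x_2)$ over $\mathcal{F}(x_1x_2)$ and $\mathcal{E}(x_1x_2)$ factors out $P_t(x_1)$ and $Q_t(x_2)$ respectively, and one recognizes $-\partial_t g_t(x_1x_2) = P_t(x_1)\partial_t Q_t(x_2) + Q_t(x_2)\partial_t P_t(x_1)$ via the same product rule.

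I do not expect a serious obstacle here, as the proposition is essentially a bookkeeping consequence of the factorization. The only point requiring care is matching the index sets in the divergence: one must confirm that the outgoing and incoming neighbor sets $\mathcal{E}((x_1x_2))$ and $\mathcal{F}((x_1x_2))$ in the edge graph, as defined in the excerpt, correspond correctly to the summation ranges appearing in $\partial_t P$ and $\partial_t Q$. Once the orientation conventions are aligned, both parts follow from the product rule for differentiation and the recombination of the central factors, so the argument is short and the main work is verifying that Proposition~\ref{prop:ftutvt} has been invoked with the correct neighbor sets.
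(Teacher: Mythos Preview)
Your proposal is correct and is exactly the direct verification the paper has in mind: the paper does not spell out a proof of this proposition but presents it as an immediate consequence of Definition~\ref{def:DivergenceOriented} and Proposition~\ref{prop:ftutvt}, and your computation---using the product rule together with the evolution equations for $P_t$ and $Q_t$ for part~1, and the trivial regrouping of factors for part~2---is precisely that verification.
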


\begin{remark} As in the thinning case, Proposition~\ref{prop:BBContraction}, and in particular equation~\eqref{eq:BBGraph} are used to study the convexity of the entropy functional along contraction families on graphs. 
\end{remark}

\subsection{The $W_1$-orientation} \label{sec:W1orient}

It is not possible to use directly Proposition~\ref{prop:BBContraction} to propose a general Benamou-Brenier condition because such a definition relies on an orientation of the graph $G$ which has been constructed by using the fact that $f_0$ is Dirac. As a first necessary step in the construction of general $W_{1,+}$-geodesics, we thus need to find a nice orientation on $G$, depending on the initial and final measures $f_0$ and $f_1$. 

\medskip

The term ``nice orientation'' is vague, but the study of the thinning and of the contraction families suggests that, in order to have interesting consequences on the convexity of the entropy, we should at least require that $g_t(x_1x_2) \geq 0$ for every $x_1 \rightarrow x_2 \in E(G)$. As we will see at the end of this paragraph, this requirement can be interpreted in the framework of optimal transportation theory. 

\medskip

We first recall some properties of supports of $W_1$-optimal couplings:

\begin{defi}
Given a couple $f_0,f_1$ of finitely supported measures, we associate the set
\begin{equation}
\mathcal{C}(f_0,f_1) := \{(x,y) \in G \times G \ : \ \exists \pi \in \Pi_1(f_0,f_1), \pi(x,y) > 0 \}.
\end{equation}
Equivalently, $\mathcal{C}(f_0,f_1)$ is the smallest subset of $G \times G$ containing the supports of all the $W_1$-optimal couplings between $f_0$ and $f_1$.
\end{defi}

\begin{prop}\label{prop:pifullsupp}
There exists $\pi \in \Pi_1(f_0,f_1)$ such that $\supp(\pi) = \mathcal{C}(f_0,f_1)$.
\end{prop}

\begin{proof} For every $(x,y) \in \mathcal{C}(f_0,f_1)$, there exists a coupling $\pi_{(x,y)} \in \Pi_1(f_0,f_1)$ with $\pi_{(x,y)}(x,y)>0$. As $f_0$ and $f_1$ are finitely supported, we can consider the barycenter 
\begin{equation}
\pi := \frac{1}{|\mathcal{C}(f_0,f_1)|} \sum_{(x,y) \in \mathcal{C}(f_0,f_1)} \pi_{(x,y)},
\end{equation}
which by convexity is in $\Pi_1(f_0,f_1)$ and which is clearly fully supported in $\mathcal{C}(f_0,f_1)$.
\end{proof}

\medskip

A tool often used when studying the support of optimal couplings is the cyclic monotonicity property:

\begin{lem} \label{lem:suppC}
If $(x_0,y_0), \ldots (x_p,y_p)$ are in $\mathcal{C}(f_0,f_1)$ then 
\begin{equation}\label{eq:suppC}
\sum_{i=0}^p d(x_i,y_i) \leq d(x_0,y_p) + \sum_{i=0}^{p-1} d(x_{i+1},y_i).
\end{equation}
\end{lem}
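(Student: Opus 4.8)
The plan is to reduce the inequality to the optimality of a single $W_1$-optimal coupling, using Proposition~\ref{prop:pifullsupp} to guarantee that all the pairs $(x_i,y_i)$ live in the support of \emph{one} optimal coupling. By Proposition~\ref{prop:pifullsupp} there is a coupling $\pi \in \Pi_1(f_0,f_1)$ with $\supp(\pi)=\mathcal{C}(f_0,f_1)$, so in particular $\pi(x_i,y_i)>0$ for every $i=0,\ldots,p$. The idea is then to perturb $\pi$ by shifting a small amount of mass cyclically among these pairs: we form a competitor coupling $\pi'$ which moves mass off the diagonal pairs $(x_i,y_i)$ and onto the shifted pairs $(x_0,y_p)$ and $(x_{i+1},y_i)$ appearing on the right-hand side of \eqref{eq:suppC}. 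Because $\pi$ is $W_1$-optimal, the cost cannot strictly decrease under this perturbation, and that inequality is exactly \eqref{eq:suppC}.

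First I would fix a small parameter $\ve>0$ with $\ve<\min_i \pi(x_i,y_i)$, which is possible since each $\pi(x_i,y_i)$ is strictly positive. Then I would define the perturbed measure
\begin{equation*}
\pi' := \pi - \ve\sum_{i=0}^p \delta_{(x_i,y_i)} + \ve\,\delta_{(x_0,y_p)} + \ve\sum_{i=0}^{p-1}\delta_{(x_{i+1},y_i)}.
\end{equation*}
The key verification is that $\pi'$ is again a coupling in $\Pi(f_0,f_1)$: one checks that the $x$-marginal and the $y$-marginal are unchanged. For the $x$-marginal, the mass removed at each $x_i$ is $\ve$ (total $(p+1)\ve$), while the mass added at the $x$-coordinates is $\ve$ at $x_0$ plus $\ve$ at each of $x_1,\ldots,x_p$, again $(p+1)\ve$, and these cancel vertex by vertex; the $y$-marginal is handled symmetrically, the removed mass at $y_0,\ldots,y_p$ being matched by the added mass at $y_p$ and at $y_0,\ldots,y_{p-1}$. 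One also checks $\pi'\geq 0$, which holds by the choice of $\ve$.

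Finally I would compare costs. Since $\pi \in \Pi_1(f_0,f_1)$ minimizes $I_1$ and $\pi'\in\Pi(f_0,f_1)$, we have $I_1(\pi)\leq I_1(\pi')$. Expanding the linear functional $I_1$ along the perturbation gives
\begin{equation*}
0 \leq I_1(\pi') - I_1(\pi) = \ve\Bigl[d(x_0,y_p) + \sum_{i=0}^{p-1} d(x_{i+1},y_i) - \sum_{i=0}^p d(x_i,y_i)\Bigr],
\end{equation*}
and dividing by $\ve>0$ yields \eqref{eq:suppC}. The main subtlety is the marginal bookkeeping in the definition of $\pi'$: one must be careful that the indices of the added pairs $(x_0,y_p)$ and $(x_{i+1},y_i)$ recombine to return precisely the marginals of the removed diagonal pairs, and that no pair is double-counted when several of the $x_i$ or $y_i$ happen to coincide (in which case the $\delta$-masses simply add, and the cancellation still goes through). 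Once the marginal identities are confirmed, the rest is a one-line consequence of optimality.
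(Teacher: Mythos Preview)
Your proof is correct and follows essentially the same approach as the paper: both use the fully supported optimal coupling from Proposition~\ref{prop:pifullsupp}, perturb it by shifting a small mass $\ve$ off the pairs $(x_i,y_i)$ and onto the cyclically permuted pairs, and then read off \eqref{eq:suppC} from the optimality inequality $I_1(\pi)\leq I_1(\pi')$. The only difference is cosmetic---the paper writes the perturbation as a function $h$ with $\pi+h$ in place of your $\pi'$---and your version is in fact slightly more careful about the marginal bookkeeping.
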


\begin{proof} We consider a coupling $\pi \in \Pi_1(f_0,f_1)$ as constructed in Proposition~\ref{prop:pifullsupp} and a number $0<a< \inf_i(\pi(x_i,y_i)).$ We introduce the function $h$ on $G \times G$ defined by $h(x_0,y_p) := a$, $h(x,y) := a$ if $(x,y)=(x_{i+1},y_i)$ for some $i \in \{0,\ldots p-1\}$, $h(x,y) := -a$ if $(x,y)=(x_i,y_i)$ for some $i \in \{0,\ldots p\}$,  and $h(x,y):=0$ elsewhere. Then $\pi + h$ is a coupling in $\Pi(f_0,f_1)$ and 
\begin{equation}
I_1(\pi+h)-I_1(\pi) = a \left( d(x_0,y_p) + \sum_{i=0}^{p-1} d(x_{i+1},y_i)-\sum_{i=0}^p d(x_i,y_i)\right), 
\end{equation}
but $ \pi  \in \Pi_1(f_0,f_1)$ implies $I_1(\pi+h) \geq I_1(\pi)$, which shows equation~\eqref{eq:suppC}.
\end{proof}

Lemma~\ref{lem:suppC} is used to define unambiguously an orientation on some edges of $G$:

\begin{thm}\label{th:OrientW1}
Let $\gamma^{(1)}, \gamma^{(2)}$ be two geodesics in $G$ such that $(e_0(\gamma^{(i)}),e_1(\gamma^{(i)})) \in \mathcal{C}(f_0,f_1)$ for $i=1,2$. Then for any $(k_1,k_2)$ with $k_i \leq L(\gamma^{(i)})-1$ we cannot have both identities $\gamma^{(1)}(k_1)=\gamma^{(2)}(k_2+1)$ and $\gamma^{(1)}(k_1+1) = \gamma^{(2)}(k_2)$.
\end{thm}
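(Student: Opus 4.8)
The plan is to argue by contradiction, using the cyclic monotonicity of the support of $W_1$-optimal couplings (Lemma~\ref{lem:suppC}) in its simplest instance $p=1$. Write $x_i := e_0(\gamma^{(i)})$, $y_i := e_1(\gamma^{(i)})$ and $L_i := L(\gamma^{(i)})$, so that by hypothesis $(x_i,y_i) \in \mathcal{C}(f_0,f_1)$ for $i=1,2$. Suppose, for contradiction, that both claimed identities hold, and set $a := \gamma^{(1)}(k_1) = \gamma^{(2)}(k_2+1)$ and $b := \gamma^{(1)}(k_1+1) = \gamma^{(2)}(k_2)$. Then $a \sim b$, and $\gamma^{(1)}$ traverses the edge $(ab)$ from $a$ to $b$ while $\gamma^{(2)}$ traverses it from $b$ to $a$; the condition $k_i \le L_i - 1$ guarantees that all the indices appearing below are legitimate.

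First I would record that, since each $\gamma^{(i)}$ is a geodesic, $d(x_1,y_1) = L_1$ and $d(x_2,y_2) = L_2$. Next I would build the two ``crossed'' paths by splicing the geodesics at the common vertices $a$ and $b$. Concatenating the initial segment of $\gamma^{(1)}$ from $x_1$ to $a$ (length $k_1$) with the terminal segment of $\gamma^{(2)}$ from $a = \gamma^{(2)}(k_2+1)$ to $y_2$ (length $L_2 - k_2 - 1$) produces a path from $x_1$ to $y_2$, so that $d(x_1,y_2) \le k_1 + L_2 - k_2 - 1$. Symmetrically, splicing the initial segment of $\gamma^{(2)}$ from $x_2$ to $b$ (length $k_2$) with the terminal segment of $\gamma^{(1)}$ from $b = \gamma^{(1)}(k_1+1)$ to $y_1$ (length $L_1 - k_1 - 1$) gives a path from $x_2$ to $y_1$, whence $d(x_2,y_1) \le k_2 + L_1 - k_1 - 1$.

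Adding these two estimates makes all the index terms cancel, leaving $d(x_1,y_2) + d(x_2,y_1) \le L_1 + L_2 - 2$. On the other hand, Lemma~\ref{lem:suppC} applied to the pair $(x_1,y_1),(x_2,y_2) \in \mathcal{C}(f_0,f_1)$ yields $d(x_1,y_1) + d(x_2,y_2) \le d(x_1,y_2) + d(x_2,y_1)$, i.e. $L_1 + L_2 \le d(x_1,y_2) + d(x_2,y_1)$. Combining the two inequalities gives $L_1 + L_2 \le L_1 + L_2 - 2$, a contradiction, which proves the theorem. Intuitively, the two geodesics cross the edge $(ab)$ in opposite directions, and rerouting each through the other's continuation lets both paths skip that edge, saving a total length of $2$ and thereby violating optimality.

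The argument is short once the correct splicing is identified, so I expect essentially the only obstacle to be the bookkeeping of the two concatenations: one must verify that the junction vertices genuinely coincide (which is exactly what the two assumed identities furnish) and that each spliced path omits the edge $(ab)$, so that the summed length drops strictly by $2$ below $L_1 + L_2$. There is no analytic difficulty; the delicate point is simply getting the index counts right.
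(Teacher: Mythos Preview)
Your proof is correct and follows essentially the same approach as the paper's: assume the two identities, splice the geodesics at the shared vertices to bound the crossed distances $d(x_1,y_2)$ and $d(x_2,y_1)$, sum to obtain a total strictly less than $L_1+L_2$, and derive a contradiction with the cyclic monotonicity Lemma~\ref{lem:suppC} applied to the two pairs $(x_1,y_1),(x_2,y_2)$. Your introduction of the labels $a,b$ and the explicit explanation of why the spliced paths omit the edge $(ab)$ make the bookkeeping slightly more transparent, but the argument is the same.
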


\begin{proof} Suppose that both identities $\gamma^{(1)}(k_1)=\gamma^{(2)}(k_2+1)$ and $\gamma^{(1)}(k_1+1) = \gamma^{(2)}(k_2)$ hold. By considering the path $e_0(\gamma^{(1)}),\cdots \gamma^{(1)}(k_1),\gamma^{(2)}(k_2+2),\cdots e_1(\gamma^{(2)})$, we see that $$d(e_0(\gamma^{(1)}),e_1(\gamma^{(2)})) \leq k_1+L(\gamma^{(2)})-k_2-1.$$ Similarly we have $$d(e_0(\gamma^{(2)}),e_1(\gamma^{(1)})) \leq k_2+L(\gamma^{(1)})-k_1-1.$$ Since $L(\gamma^{(i)})=d(e_0(\gamma^{(1)}),e_1(\gamma^{(1)}))$ for $i=1,2$, we have 
\begin{equation}
d(e_0(\gamma^{(1)}),e_1(\gamma^{(1)}))+d(e_0(\gamma^{(2)}),e_1(\gamma^{(2)})) \geq d(e_0(\gamma^{(1)}),e_1(\gamma^{(2)}))+d(e_0(\gamma^{(2)}),e_1(\gamma^{(1)}))+2,
\end{equation}
which by Lemma~\ref{lem:suppC} is a contradiction.
\end{proof}

\begin{defi} Let $f_0,f_1$ be two finitely supported probability distributions on $G$.
\begin{itemize}
\item The $W_1$-orientation with respect to $f_0,f_1$ is defined orienting the edge $(x,y) \in E(G)$ by $x \rightarrow y$ if there exists a geodesic $\gamma$ on $G$ such that
\begin{enumerate}
\item $(e_0(\gamma),e_1(\gamma)) \in \mathcal{C}(f_0,f_1)$.
\item $\gamma(k)=x$, $\gamma(k+1)=y$ for some $k \in \{0,\ldots L(\gamma)-1\}$.
\end{enumerate}
\item An oriented path on the oriented graph $(G,\rightarrow)$ is an application $\gamma:\{0,\ldots L\} \rightarrow G$ such that $\gamma(i) \rightarrow \gamma(i+1)$ for $i=0,\ldots L-1$. 
\item We define a partial order relation on the vertices of $G$ by writing $x \leq y$ if there exists an oriented path joining $x$ to $y$.
\end{itemize}
\end{defi}

An important property of the $W_1$-orientation is the following:

\begin{thm}\label{prop:GeodG}
Every oriented path on $(G,\rightarrow)$ is a geodesic.
\end{thm}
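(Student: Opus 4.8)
```latex
\textbf{Proof plan.}
The plan is to show that any oriented path is automatically a geodesic by using the orientation's defining property together with the cyclic monotonicity inequality of Lemma~\ref{lem:suppC}. Let $\gamma : \{0,\ldots,L\} \to G$ be an oriented path, so that each edge $(\gamma(i),\gamma(i+1))$ is oriented $\gamma(i) \to \gamma(i+1)$. By the definition of the $W_1$-orientation, for each $i$ there is a geodesic $\sigma^{(i)}$ with endpoints in $\mathcal{C}(f_0,f_1)$ which traverses the edge $(\gamma(i),\gamma(i+1))$ in the forward direction. The goal is to prove $d(\gamma(0),\gamma(L)) = L$, i.e.\ that the path has no ``wasted'' steps; since $\gamma$ is a path of length $L$, we always have $d(\gamma(0),\gamma(L)) \leq L$, so the content is the reverse inequality, which is equivalent to saying the $L$ edges genuinely accumulate distance.

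First I would reduce to a local statement about consecutive edges. The key intermediate claim is that if $x \to y \to z$ is an oriented path of length two, then $d(x,z) = 2$; equivalently, the step from $x$ to $y$ and the step from $y$ to $z$ cannot ``cancel,'' which would force $x = z$, nor can they create a shortcut. The case $x = z$ is precisely ruled out by Theorem~\ref{th:OrientW1}: the two edges $(x,y)$ and $(y,z)$ come from geodesics $\sigma^{(0)}, \sigma^{(1)}$ with endpoints in $\mathcal{C}(f_0,f_1)$, and $x = z$ would give the forbidden configuration $\sigma^{(0)}(k_0) = \sigma^{(1)}(k_1+1)$ and $\sigma^{(0)}(k_0+1) = \sigma^{(1)}(k_1)$ at the relevant indices. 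So $x \neq z$ and hence $d(x,z) \in \{1,2\}$; the remaining task is to exclude $d(x,z)=1$, i.e.\ to show $x$ and $z$ are not adjacent via a shortcut, which I would handle by feeding the triple of endpoints of $\sigma^{(0)}$ and $\sigma^{(1)}$ into Lemma~\ref{lem:suppC} and deriving a contradiction with the geodesic lengths, exactly in the style of the proof of Theorem~\ref{th:OrientW1}.

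From the two-step statement I would bootstrap to arbitrary length. The clean way is to apply cyclic monotonicity directly to the full collection of endpoints: for each edge of $\gamma$, let $(a_i,b_i) := (e_0(\sigma^{(i)}), e_1(\sigma^{(i)})) \in \mathcal{C}(f_0,f_1)$, where $\sigma^{(i)}$ passes through $\gamma(i) \to \gamma(i+1)$ at index $k_i$. Each such geodesic decomposes its length as
\begin{equation}
d(a_i,b_i) = d(a_i,\gamma(i)) + 1 + d(\gamma(i+1),b_i).
\end{equation}
I would apply Lemma~\ref{lem:suppC} to the tuple $(a_0,b_0),\ldots,(a_{L-1},b_{L-1})$, estimate the cross terms $d(a_{i+1},b_i)$ via the triangle inequality through the path $a_{i+1} \to \gamma(i+1) \to \cdots$, and observe that the savings produced by the oriented structure of $\gamma$ force $d(\gamma(0),\gamma(L)) \geq L$, so $\gamma$ is a geodesic.

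\textbf{Main obstacle.} I expect the hard part to be bookkeeping the cross terms $d(a_{i+1}, b_i)$ correctly so that the distances along $\gamma$ assemble into the single quantity $d(\gamma(0),\gamma(L))$ without slack; the local (two-step) argument is a direct copy of Theorem~\ref{th:OrientW1}'s proof, but chaining it globally cleanly enough to conclude the reverse distance inequality, rather than merely pairwise statements, is where care is needed. An induction on $L$ using the two-step claim as the inductive engine is a safe fallback if the one-shot cyclic monotonicity argument proves fiddly.
```
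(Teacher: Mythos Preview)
Your ``clean way'' --- applying Lemma~\ref{lem:suppC} in one shot to the full tuple $(a_0,b_0),\ldots,(a_{L-1},b_{L-1})$, bounding each cross term $d(a_{i+1},b_i)$ by the triangle inequality through $\gamma(i+1)$, and bounding the wrap-around term $d(a_0,b_{L-1})$ through $\gamma(0)$ and $\gamma(L)$ so that $d(\gamma(0),\gamma(L))$ appears --- is exactly the paper's proof, and the bookkeeping you are worried about telescopes cleanly: the $k_i$'s and $L_i$'s cancel and one is left with $0 \le d(\gamma(0),\gamma(L)) - L$. The preliminary two-step discussion and the appeal to Theorem~\ref{th:OrientW1} are unnecessary detours; the paper goes straight to the global estimate.

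One warning: your proposed fallback, induction on $L$ with the two-step claim as engine, is \emph{not} safe. Knowing that every oriented length-two subpath is a geodesic, together with the inductive hypothesis that $d(\gamma(0),\gamma(L-1)) = L-1$, only yields $d(\gamma(0),\gamma(L)) \geq L-2$ via the triangle inequality; nothing in the local two-step statement rules out a long-range shortcut from $\gamma(0)$ to $\gamma(L)$. To close that gap you would have to reinvoke cyclic monotonicity on the full chain anyway, at which point you are back to the one-shot argument. So commit to the global application of Lemma~\ref{lem:suppC}; it is both the paper's route and the only one that actually lands.
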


\begin{proof} Let $\gamma$ be an oriented path on $(G,\rightarrow)$ of length $n$. To show that $\gamma$ is a geodesic, it suffices to prove that $d(\gamma(0),\gamma(n)) \geq n$. By definition of the $W_1$-orientation, for each $i \in \{0,\ldots n-1\}$ there exists a geodesic $\gamma^{(i)}$ of length $L_i \geq 1$ and $k_i \in \{0,\ldots L_i-1\}$ such that
\begin{itemize}
\item $(e_0(\gamma^{(i)}),e_1(\gamma^{(i)})) \in \mathcal{C}(f_0,f_1)$,
\item $\gamma^{(i)}(k_i)=\gamma(i)$,
\item $\gamma^{(i)}(k_i+1)=\gamma(i+1)$.
\end{itemize}
By Lemma~\ref{lem:suppC}, setting $x_i:=e_0(\gamma^{(i)})$ and $y_i:=e_1(\gamma^{(i)})$, we have
\begin{equation}
\sum_{i=0}^{n-1} d(x_i,y_i) \leq d(x_0,y_{n-1}) + \sum_{i=0}^{n-2} d(x_{i+1},y_i).
\end{equation}
But, $\gamma^{(i)}$ being a geodesic of $G$, we have $d(x_i,y_i)=d(e_0(\gamma^{(i)}),e_1(\gamma^{(i)}))=L_i$. Furthermore, for $i \in \{0,\ldots n-2\}$ we have
\begin{eqnarray*}
d(x_{i+1},y_i) & \leq & d(x_{i+1},\gamma(i+1))+d(\gamma(i+1),y_i) \\
&=& d(\gamma^{(i+1)}(0),\gamma^{(i+1)}(k_{i+1})) + d(\gamma^{(i)}(k_i+1),\gamma^{(i)}(L_i)) \\ 
&=& k_{i+1}+L_{i}-k_i-1.
\end{eqnarray*}
We also have the estimation
\begin{eqnarray*}
d(x_0,y_{n-1}) &\leq & d(x_0,\gamma(0))+d(\gamma(0),\gamma(n))+d(\gamma(n),y_{n-1}) \\
&=& d(\gamma^{(0)}(0),\gamma^{(0)}(k_0))+d(\gamma(0),\gamma(n))\\&&+d(\gamma^{(n-1)}(L_{n-1}),\gamma^{(n-1)}(k_{n-1}+1))\\
&=& k_0+L_{n-1}-k_{n-1}-1+d(\gamma(0),\gamma(n)).
\end{eqnarray*}
We finally have
\begin{equation}
\sum_{i=0}^{n-1} L_i \leq k_0+L_{n-1}-k_{n-1}-1+d(\gamma(0),\gamma(n)) + \sum_{i=0}^{n-2} k_{i+1}+L_{i}-k_i-1,
\end{equation}
which gives $0 \leq d(\gamma(0),\gamma(n)) -n$ and proves the theorem.
\end{proof}

The following shows that the $W_1$-orientation is in some sense stable by restriction:

\begin{prop} \label{prop:W1restriction}
Let $(f_t)_{t \in [0,1]}$ be a $W_1$-geodesic on $G$. For $0 \leq s \leq t \leq 1$, let $(x,y)$ in $E(G)$ such that $x \rightarrow y$ for the $W_1$-orientation with respect to $f_s,f_t$. Then $x \rightarrow y$ for the $W_1$-orientation with respect to $f_0,f_1$.
\end{prop}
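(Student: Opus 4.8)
The plan is to use the single structural feature of a $W_1$-geodesic that we need, namely that distances add along the curve: for all $0\le s\le t\le 1$ one has $W_1(f_0,f_s)+W_1(f_s,f_t)+W_1(f_t,f_1)=W_1(f_0,f_1)$. Suppose $x\to y$ for the $W_1$-orientation with respect to $f_s,f_t$, and let $\gamma$ be a witnessing geodesic with endpoints $a:=e_0(\gamma)$, $b:=e_1(\gamma)$ satisfying $(a,b)\in\mathcal{C}(f_s,f_t)$ and $\gamma(k)=x$, $\gamma(k+1)=y$ for some $k$. By definition of $\mathcal{C}$ there is a coupling $\pi_{s,t}\in\Pi_1(f_s,f_t)$ with $\pi_{s,t}(a,b)>0$. (I note in passing that, $G$ being locally finite and $f_0,f_1$ finitely supported, each intermediate $f_u$ is finitely supported as well, its support lying on the finitely many geodesics joining $\supp(f_0)$ to $\supp(f_1)$; so all the objects below are well defined and the gluing step is purely combinatorial.)

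First I would fix optimal couplings $\pi_{0,s}\in\Pi_1(f_0,f_s)$ and $\pi_{t,1}\in\Pi_1(f_t,f_1)$ and glue them with $\pi_{s,t}$. Since all measures are finitely supported, conditional probabilities produce a probability measure $\Pi$ on $G^4$, with coordinates indexed by the times $0,s,t,1$, whose marginal on coordinates $(1,2)$ is $\pi_{0,s}$, on $(2,3)$ is $\pi_{s,t}$, and on $(3,4)$ is $\pi_{t,1}$. Let $\pi_{0,1}$ be the marginal of $\Pi$ on coordinates $(1,4)$.

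Next I would show $\pi_{0,1}$ is $W_1$-optimal. For every quadruple $(a_0,a',b',b_1)$ the triangle inequality gives $d(a_0,b_1)\le d(a_0,a')+d(a',b')+d(b',b_1)$; integrating against $\Pi$ and using optimality of the three consecutive marginals, the cost of $\pi_{0,1}$ is at most $W_1(f_0,f_s)+W_1(f_s,f_t)+W_1(f_t,f_1)=W_1(f_0,f_1)$. As $\pi_{0,1}\in\Pi(f_0,f_1)$ its cost is also at least $W_1(f_0,f_1)$, so $\pi_{0,1}\in\Pi_1(f_0,f_1)$ and, the total inequality being an equality while the integrand inequality is pointwise, equality holds at every atom of $\Pi$. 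Since $\pi_{s,t}(a,b)>0$ is the $(2,3)$-marginal, some atom $(a_0,a,b,b_1)$ of $\Pi$ has positive mass; for it $d(a_0,b_1)=d(a_0,a)+d(a,b)+d(b,b_1)$, so $a$ and $b$ lie in this order on a geodesic from $a_0$ to $b_1$, and $\pi_{0,1}(a_0,b_1)>0$ yields $(a_0,b_1)\in\mathcal{C}(f_0,f_1)$. Concatenating a geodesic from $a_0$ to $a$, the geodesic $\gamma$ from $a$ to $b$ (which traverses the edge $x\to y$), and a geodesic from $b$ to $b_1$ gives a path of length $d(a_0,b_1)$, hence a geodesic from $a_0$ to $b_1$ through the edge $(x,y)$; combined with $(a_0,b_1)\in\mathcal{C}(f_0,f_1)$ this is precisely the witness required, so $x\to y$ for the $W_1$-orientation with respect to $f_0,f_1$.

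I expect the main obstacle to be the optimality-plus-tightness argument for $\pi_{0,1}$: one must be sure both that the glued projection attains $W_1(f_0,f_1)$ and that the triangle inequality is therefore saturated at the relevant atom, so that the geodesic segments concatenate into a genuine geodesic. The measure-theoretic gluing itself is harmless here because every $f_u$ is finitely supported, and the final concatenation is then just the observation that a path whose length equals the distance between its endpoints is a geodesic.
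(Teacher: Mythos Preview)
Your proof is correct and follows essentially the same route as the paper: glue optimal couplings $\pi_{0,s}$, $\pi_{s,t}$, $\pi_{t,1}$, project to the outer coordinates, use additivity of $W_1$ along the geodesic to force optimality of the projection and hence equality in the triangle inequality at each atom, then concatenate geodesic segments through the edge $(x,y)$. Your write-up is in fact slightly more explicit than the paper's about the final concatenation step yielding a genuine geodesic from $a_0$ to $b_1$ through $(x,y)$ witnessing the $(f_0,f_1)$-orientation.
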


\begin{proof} It suffices to show that, if $\tilde{\pi} \in \Pi_1(f_s,f_t)$ and $\tilde{\pi}(b,c) > 0$ then $b \leq c$ for the partial order coming from the $W_1$-orientation w.r.t. $f_0,f_1$. 

\medskip

The proof of this fact is inspired by the 'gluing lemma' stated and explained in~\cite{LottVillani}: let $\pi^{(1)} \in \Pi_1(f_0,f_s), \pi^{(2)} \in \Pi_1(f_s,f_t)$ and $\pi^{(3)} \in \Pi_1(f_t,f_1)$. We consider the 'gluing' $\pi$ of these three couplings, defined by: $$\pi(a,d) := \sum_{b,c \in G} \frac{\pi^{(1)}(a,b) \pi^{(2)}(b,c) \pi^{(3)}(c,d)}{f_s(b) f_t(c)},$$ where the quotient is zero when $f_s(b)=0$ or $f_t(c)=0$. It is easily shown that $\pi \in \Pi(f_0,f_1)$. Moreover,
\begin{eqnarray*}
W_1(f_0,f_1) &\leq & \sum_{a,d \in G} d(a,d) \pi(a,d) \\
& \leq& \sum_{a,b,c,d \in G} (d(a,b)+d(b,c)+d(c,d)) \pi(a,d) \\
& = & \sum_{a,b,c,d \in G} (d(a,b)+d(b,c)+d(c,d)) \frac{\pi^{(1)}(a,b) \pi^{(2)}(b,c) \pi^{(3)}(c,d)}{f_s(b) f_t(c)} \\
&=& \sum_{a,b \in G} d(a,b) \pi^{(1)}(a,b) + \sum_{b,c \in G} d(b,c) \pi^{(2)}(b,c) + \sum_{c,d \in G} d(c,d) \pi^{(3)}(c,d) \\
&=& W_1(f_0,f_s) + W_1(f_s,f_t) + W_1(f_t,f_1) \\
&=& W_1(f_0,f_1).
\end{eqnarray*}
This shows the $W_1$-optimality of $\pi$ and the equality $$d(a,d) \pi(a,d) = (d(a,b)+d(b,c)+d(c,d)) \pi(a,d).$$  Theorem~\ref{prop:GeodG} shows that whenever $\pi(a,d) > 0$, we have $a \leq b \leq c \leq d$. On the other hand, if $\pi^{(2)}(b,c) > 0$ then there exists $a \in \supp(f_0)$ and $d \in \supp(f_1)$ with $\pi^{(1)}(a,b)>0$ and $\pi^{(3)}(c,d) >0 $, so $\pi(a,b)=0$ and so $b \leq c$. 
\end{proof}

\medskip

We now prove:

\begin{thm}\label{th:W1OrientGeod}
Let $(f_t)$ be a smooth $W_1$-geodesic on $G$. We endow this graph with the $W_1$-orientation with respect to $f_0,f_1$. There exists a family $(g_t) : E(G) \rightarrow \mathbb{R}$  such that 
\begin{itemize}
 \item $\forall x \in G$, $\frac{\partial}{\partial t} f_t(x) = -\nabla  g_t(x)$.
 \item $\forall (xy) \in E(G) \ , \ g_t(xy) \geq 0.$
\end{itemize}
Moreover, there exists a family $h_t : T(G) \rightarrow \mathbb{R}$ such that 
\begin{equation*}
\forall (xy) \in E(G) \ , \ \frac{\partial}{\partial t} g_t(xy) = -\nabla  h_t(xy).
\end{equation*}
\end{thm}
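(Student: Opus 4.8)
The plan is to solve the two continuity equations one after the other, treating each as a discrete flow problem on the oriented graph. Throughout I use that, by Theorem~\ref{prop:GeodG}, every oriented path is a geodesic, so $(G,\rightarrow)$ is acyclic and in fact \emph{graded}: since any two oriented paths with the same endpoints have equal length, there is a height function $\phi:G\to\mathbb Z$ (on each connected component of the oriented graph) with $\phi(y)-\phi(x)=1$ for every oriented edge $x\rightarrow y$, and moreover $\phi(y)-\phi(x)=d(x,y)$ whenever $x\leq y$. I also use that the divergence operators of Definition~\ref{def:DivergenceOriented} are, up to sign, the adjoints of the discrete gradients $\phi\mapsto(\phi(y)-\phi(x))$ and $\psi\mapsto(\psi(yz)-\psi(xy))$; hence the image of each divergence operator is exactly the set of functions whose sum vanishes on every connected component of the relevant graph.

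First I would construct $g_t$. Fix $t$ and regard $s_t:=-\frac{\partial}{\partial t}f_t$ as a supply/demand on the (finite) oriented graph; I seek a non-negative flow $g_t$ with $\nabla g_t=s_t$. By Gale's feasibility criterion, such a flow exists if and only if $\sum_{x\in S}s_t(x)\le 0$ for every vertex set $S$ having no outgoing oriented edge, i.e.\ for every up-set $S$ of the partial order. This is exactly the statement that $t\mapsto f_t(S)=\sum_{x\in S}f_t(x)$ is non-decreasing for every up-set $S$. That monotonicity follows from Proposition~\ref{prop:W1restriction}: taking $\pi\in\Pi_1(f_s,f_t)$, every pair $(b,c)\in\supp(\pi)$ satisfies $b\leq c$, so no mass can leave an up-set and $f_t(S)\geq f_s(S)$. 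Thus a non-negative $g_t$ exists for each $t$; I then fix the remaining freedom in $g_t$ by a construction smooth in $t$ (for instance routing an optimal coupling $\pi^{s,t}$ along oriented geodesics to obtain a non-negative edge-flow $\Theta^{s,t}$ with $\nabla\Theta^{s,t}=f_s-f_t$, and setting $g_t=\lim_{s\to t}(t-s)^{-1}\Theta^{s,t}$, or selecting a minimal-norm feasible flow), so that $\frac{\partial}{\partial t}f_t=-\nabla g_t$ holds identically.

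For $h_t$ I differentiate the first equation, giving $\nabla(\frac{\partial}{\partial t}g_t)=-\frac{\partial^2}{\partial t^2}f_t$, and then solve $\frac{\partial}{\partial t}g_t=-\nabla h_t$ on the graph of oriented triples. Since there is no sign constraint on $h_t$, this is purely linear: by the adjointness above it is solvable precisely when $\frac{\partial}{\partial t}g_t$ has zero sum on each connected component $\mathcal K$ of $(E(G),\rightarrow)$, equivalently when $t\mapsto\sum_{e\in\mathcal K}g_t(e)$ is constant. The global version of this is where the geodesic rigidity enters: summing against $\phi$, one gets $\langle\phi,f_t\rangle-\langle\phi,f_s\rangle=\sum_{b,c}(\phi(c)-\phi(b))\pi^{s,t}(b,c)=\sum_{b,c}d(b,c)\pi^{s,t}(b,c)=W_1(f_s,f_t)=(t-s)\,W_1(f_0,f_1)$, so $\langle\phi,f_t\rangle$ is affine and $\frac{\partial^2}{\partial t^2}\langle\phi,f_t\rangle=0$. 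I would then localise this identity to each $\mathcal K$, using that a single oriented geodesic lies entirely in one component, to obtain the per-component conservation and hence $h_t$.

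The main obstacle is the interaction between the two levels. At the first level the non-negativity of $g_t$ is forced, whereas at the second level one needs $\frac{\partial}{\partial t}g_t$ to be \emph{exact} (component-balanced), and both must hold for a single family $g_t$ that is smooth in $t$. The degrees of freedom in the (highly non-unique) flow $g_t$ must therefore be spent to meet the component conditions; consistency of those conditions is guaranteed by the global affine identity, but producing a smooth, non-negative, component-balanced selection \emph{simultaneously}, rather than merely at each frozen time, is the heart of the argument. Establishing the smoothness of the chosen flow and the localisation of the affine identity to individual components are the two steps I expect to require the most care.
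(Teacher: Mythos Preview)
Your overall logic is sound, but you have manufactured a difficulty that the paper avoids entirely, and the difficulty is real: you do not actually produce a \emph{smooth} non-negative $g_t$. Gale's criterion gives, for each frozen $t$, a non-empty polytope of feasible non-negative flows, but neither of your proposed selections is known to depend smoothly on $t$. The routing construction $g_t=\lim_{s\to t}(t-s)^{-1}\Theta^{s,t}$ requires you to choose optimal couplings $\pi^{s,t}$ and geodesic routings varying smoothly in $(s,t)$, which is not automatic (the set $\Pi_1(f_s,f_t)$ is itself a polytope). The minimal-norm selection is a parametric quadratic program whose solution is only piecewise smooth in general: it can develop kinks when the active inequality constraints $g_t(e)=0$ change. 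Since the very next step differentiates $g_t$ in $t$, this is a genuine gap, and it is precisely the obstacle you flag yourself at the end.

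The paper sidesteps this completely by working on a \emph{spanning tree} $G'$ of the oriented graph. On a tree the equation $\nabla g_t=-\partial_t f_t$ has a \emph{unique} solution, given by the explicit formula
\[
g_t(x_0y_0)=-\sum_{z\in G'_1(x_0y_0)}\frac{\partial}{\partial t}f_t(z),
\]
where $G'_1(x_0y_0)$ is the component of $G'\setminus(x_0y_0)$ containing $x_0$. Smoothness is then inherited from $f_t$, and non-negativity reduces to showing that $t\mapsto f_t(G'_1)$ is non-increasing, which is exactly your up-set monotonicity argument via Proposition~\ref{prop:W1restriction} (each $G'_1$ is a down-set for the order). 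One then extends $g_t$ by zero to $E(G)\setminus E(G')$. So your feasibility/monotonicity idea is the right one; what you are missing is that restricting to a spanning tree kills the freedom in $g_t$ and hands you the smooth selection for free.

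For $h_t$, the paper proceeds as you do: it proves the global identity $\sum_{(xy)\in E(G)} g_t(xy)=W_1(f_0,f_1)$ by pairing with the height function you call $\phi$, and then invokes Lemma~\ref{lem:unablag} on the edge graph. Your concern about localising to connected components of $(E(G),\rightarrow)$ is legitimate in principle, but note that once $g_t$ is supported on a fixed spanning tree and given by the explicit formula above, this localisation issue is far more tractable than in your setting where the support of $g_t$ might itself vary with $t$. In short: replace Gale's criterion by the spanning-tree construction and both of your ``steps requiring the most care'' largely dissolve.
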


We first prove a general result implying the existence of a family $(g_t)$ such that $\frac{\partial}{\partial t} f_t(x) = -\nabla  g_t(x)$:

\begin{lem}\label{lem:unablag}
Let $(G,\rightarrow)$ be an oriented graph and $u : G  \rightarrow \mathbb{R}$ finitely supported such that $ \sum_{x \in G} u(x) = 0$. Then there exists $g : (E(G),\rightarrow) \rightarrow \mathbb{R}$ with $\nabla g=u$.
\end{lem}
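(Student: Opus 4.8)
The plan is to exploit the linearity of the divergence operator and reduce the problem to the case of a single source--sink pair. Since $\nabla$ is linear in $g$, it suffices to produce, for each vertex $x$ and a fixed base vertex $o$, a finitely supported $g_x$ on $E(G)$ with $\nabla g_x = \delta_x - \delta_o$; the general solution is then obtained by superposition. To see that this suffices, fix $o$ and use the hypothesis $\sum_x u(x)=0$ to write
\begin{equation*}
u \;=\; \sum_{x \in \supp(u)} u(x)\,\delta_x \;=\; \sum_{x \in \supp(u)} u(x)\,(\delta_x - \delta_o),
\end{equation*}
a finite sum because $u$ is finitely supported. If each dipole $\delta_x-\delta_o$ is realized as $\nabla g_x$, then $g := \sum_x u(x)\,g_x$ is finitely supported and satisfies $\nabla g = u$ by linearity.

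Next I would solve the dipole problem by pushing a unit of flow along a path. The basic observation is that for a single oriented edge $a \to b$, the function equal to $1$ on $(ab)$ and $0$ elsewhere has divergence $\delta_a - \delta_b$, directly from Definition~\ref{def:DivergenceOriented} (the edge $(ab)$ is outgoing at $a$ and incoming at $b$). Since $G$ is connected, I can join $o$ to $x$ by a simple path $o = w_0, w_1, \ldots, w_k = x$. For each consecutive pair the corresponding edge is oriented one way or the other; I assign to it the value $-1$ if it is oriented $w_i \to w_{i+1}$ and $+1$ if it is oriented $w_{i+1} \to w_i$, and $0$ off the path. Telescoping the single-edge identities gives
\begin{equation*}
\nabla g_x \;=\; \sum_{i=0}^{k-1} \bigl(\delta_{w_{i+1}} - \delta_{w_i}\bigr) \;=\; \delta_x - \delta_o,
\end{equation*}
as required, and $g_x$ is supported on the finitely many edges of the path.

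Assembling these yields the desired $g$ and establishes the lemma. The point I expect to require the most care is the bookkeeping of orientation signs: because $g$ is allowed to take negative values, a ``backward'' oriented edge along the routing path carries flow $+1$ rather than $-1$, so the orientation of $G$ never actually obstructs the construction---only the underlying connectivity of $G$ matters, and in particular the acyclicity furnished by Theorem~\ref{prop:GeodG} is not needed here. The only hypotheses genuinely used, beyond $\sum_x u(x)=0$ and finiteness of $\supp(u)$, are that the relevant vertices can be joined by paths in the oriented graph, which is guaranteed by the connectedness of $G$. An equivalent alternative would be to induct on $|\supp(u)|$, at each step choosing $x$ with $u(x)>0$ and $y$ with $u(y)<0$, routing $\min(u(x),-u(y))$ units of flow along a path from $x$ to $y$ so as to strictly decrease the support; I find the dipole decomposition cleaner since it avoids carrying an inductive hypothesis.
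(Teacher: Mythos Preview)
Your proof is correct and takes a genuinely different route from the paper's. The paper argues abstractly: it introduces the inner products $\langle u,v\rangle_G$ and $\langle a,b\rangle_E$, checks that the gradient $\partial u(xy)=u(y)-u(x)$ is (minus) the adjoint of $\nabla$, observes that $\ker\partial$ consists of constants (using connectedness), and then invokes finite-dimensional linear algebra to get $(\ker\partial)^{\perp_G}\subset\im(\nabla)$. Your argument is instead fully constructive: a dipole decomposition $u=\sum_x u(x)(\delta_x-\delta_o)$ followed by explicit unit-flow routing along paths, with the sign bookkeeping (which you handle correctly) absorbing the edge orientations.

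Each approach buys something. Yours is more elementary and in fact slightly more general: it needs only that $\supp(u)$ is finite and that any two vertices of the support can be joined by a walk through oriented edges, whereas the paper's proof relies on the finite-dimensionality of the ambient function spaces. The paper's approach, on the other hand, sets up the adjointness identity $\langle\nabla a,u\rangle_G=-\langle a,\partial u\rangle_E$, which is not idle: it is reused immediately afterwards in the proof of Theorem~\ref{th:W1OrientGeod} to compute $g_t(x_0y_0)$ via the indicator functions $u_{(x_0y_0)}$. So while your argument dispatches the lemma more directly, the paper's detour through duality pays dividends downstream.
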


\begin{proof} We consider two scalar products, on the spaces of functions defined respectively on $G$ and $E(G)$, defined by $$<u,v>_G := \sum_{x \in G} u(x) v(x) \ , \ <a,b>_E := \sum_{x \rightarrow y} a(xy) b(xy).$$ The adjoint of the divergence operator $\nabla$ is $-\partial$, where $\partial$ is the linear operator $\partial$ defined by $(\partial u) (xy) := u(y)-u(x)$, in the sense that 
\begin{equation} \label{eq:nablapartialadjoint}
<\nabla a,u>_G = -<a,\partial u>_E 
\end{equation}
for any couple $u,a$ of functions respectively defined on $G$ and $E(G)$. The kernel of $ \partial$ is the one-dimensional space generated by the constant function $v=1$. The condition $ \sum_{x \in G} u(x) = 0$ is thus equivalent to $<u,v>_G=0$ or $u \in (\ker(\partial))^{\perp_G}$. We thus want to prove the inclusion $(\ker(\partial))^{\perp_G} \subset \im(\nabla)$. As the linear spaces we are considering are finite-dimensional, this inclusion is equivalent to $(\im(\nabla))^{\perp_G} \subset \ker(\partial)$. Let $u \in (\im(\nabla))^{\perp_G}$. Then for any $b : (E(G),\rightarrow) \rightarrow \mathbb{R}$ we have $<\nabla b, u>_G=0$, so $<b,\partial u>_E = 0$, which proves that $ u \in \ker(\partial)$.
\end{proof}

\medskip

As we have $\sum_{x \in G} \frac{\partial}{\partial t} f_t(x) = 0$, Lemma~\ref{lem:unablag} gives the existence of a family $(g_t)$ with $\frac{\partial}{\partial t} f_t(x) = -\nabla  g_t(x)$. However, this result does not provide an explicit construction of $g$ and in general nothing can be said about its sign.

\medskip

\textit{Proof of Theorem~\ref{th:W1OrientGeod}.} Let $G'$ be a spanning tree of $G$, i.e. a tree having the same vertices as $G$, but with possibly fewer edges. We endow $G'$ with the restriction of the orientation on $G$. According to Lemma~\ref{lem:unablag}, there exists a family of functions $(g_t) : E(G') \rightarrow \mathbb{R}_+$ satisfying $\frac{\partial}{\partial t} f_t(x) = -\nabla  g_t(x)$. As $G'$ is a tree, we know that removing an edge $(x_0y_0)$ from the graph $G'$ will cut it into two disjoint subgraphs $G'_1 := G'_1(x_0y_0)$ and $G'_2 := G'_2(x_0y_0)$ such that $x_0 \in G_1'$ and $y_0 \in G_2'$. Let $u_{(x_0y_0)}$ be the indicator function of $G_1'$. This function satisfies $(\partial u_{(x_0y_0)})(xy) = -1$ if $(xy)=(x_0y_0)$ and  $(\partial u_{(x_0y_0)})(xy) = 0$ otherwise, which implies: 
\begin{eqnarray*}
g_t(x_0y_0) &=&  - \sum_{(xy) \in E(G'} g_t(xy) (\partial u_{(x_0y_0)})(xy) \\
&=& -<g_t, \partial u_{(x_0y_0)}>_E =  <\nabla g_t, u_{(x_0y_0)}>_G \\
&=& -<\frac{\partial}{\partial t} f_t, u_{(x_0y_0)}> = - \sum_{z \in G'_1} \frac{\partial}{\partial t}f_t(z).
\end{eqnarray*}
We want to prove that $g_t(x_0y_0) \geq 0$. Actually we will prove that the function $t \mapsto \sum_{z \in G'_1} f_t(z)$ is strictly decreasing, so we have $g_t(x_0y_0) > 0$. For $0 \leq s \leq t \leq 1$, let $\pi \in \Pi_1(f_s,f_t)$. We have: $$\sum_{z \in G'_1} f_s(z) = \sum_{x \leq y \in G \ : \ x \in G'_1} \pi(x,y) \ , \ \sum_{z \in G'_1} f_t(z) = \sum_{x \leq y \in G \ : \ y \in G'_1} \pi(x,y)$$ By Proposition~\ref{prop:W1restriction}, we know that if $\pi(x,y)>0$ then $x \leq y$. In particular, we cannot have $x \in G_2'$ and $y \in G_1'$. Equivalently, if $x\leq y$, $\pi(x,y)>0$ and $y \in G_1$ then $x \in G_1$. Consequently, we have
\begin{equation} \label{eq:fsdecrease}
\sum_{z \in G'_1} f_s(z)-\sum_{z \in G'_1} f_t(z) = -\sum_{x \leq y  \in G \ : \ x \in G'_1 , y \in G'_2} \pi(x,y) \leq 0.
\end{equation}
Furthermore, as $(x_0y_0)$ is an oriented edge, we know by the definition of $W_1$-orientation that there exists $(x,y) \in \mathcal{C}(f_0,f_1)$ such that $x \leq x_0 \leq y_0 \leq y$. In particular, $x \in G_1'$, $y \in G_2'$ and $\pi(x,y)>0$. This proves that the inequality~\eqref{eq:fsdecrease} is actually strict, which shows the positivity of the family of functions $(g_t)$ on $E(G')$. The first point of Theorem~\ref{th:W1OrientGeod} is proven by extending $g_t$ to $E(G)$, setting $g_t(xy):=0$ if $(xy) \notin E(G')$.

\medskip

The existence of a family of functions $(h_t)$ such that $\frac{\partial }{\partial t} g_t = -\nabla h_t $ is proven by Lemma~\ref{lem:unablag}. We only need to check that $\sum_{(xy) \in E(G)} \frac{\partial }{\partial t} g_t(xy) = 0$. We are actually going to prove the stronger statement: $$\sum_{(xy) \in E(G)} g_t(xy) = W_1(f_0,f_1).$$  To prove this fact, we consider the function $u := \sum_{(x_0y_0) \in E(G')} u_{(x_0y_0)}.$ The function $u$ satisfies $(\partial u)(xy) = 1$ for every $x\rightarrow y \in E(G')$. We then have:
\begin{eqnarray*}
\int_0^t \sum_{(xy) \in E(G)} g_s(xy) ds &=& \int_0^t \sum_{(xy) \in E(G)} g_s(xy) (\partial u)(x,y)ds \\
&=& - \int_0^t \sum_{x \in G} (\nabla g_s)(x) u(x) dt \\
&=& \int_0^t \sum_{x \in G} \frac{\partial}{\partial s} f_s(x) u(x) ds \\
&=& \sum_{y \in G}f_t(y)u(y) - \sum_{x \in G} f_0(x) u(x). 
\end{eqnarray*}
Let $\pi \in \Pi_1(f_0,f_t)$. We know by Proposition~\ref{prop:W1restriction} that if $\pi(x,y)>0$ then $x \leq y$. On the other hand, if $x \leq y$ then there exists a path $x= \gamma_0 \rightarrow \cdots \rightarrow \gamma_n = y$ and we have $u(y)-u(x) = (u(\gamma_n)-u(\gamma_{n-1}))+\cdots +(u(\gamma_1)-u(\gamma_0))= n = d(x,y)$, so we have 
$$\int_0^t \sum_{(xy) \in E(G)} g_s(xy) ds = \sum_{x \leq y} \pi(x,y) d(x,y) = W_1(f_0,f_t) = t W_1(f_0,f_1).$$ Differentiating with respect to $t$ shows that the sum $\sum_{(xy) \in E(G)} g_t(xy)$ is constant and equal to $W_1(f_0,f_1)$. To finish the proof of the theorem, we extend $(h_t)$ to $T(G)$ by defining $h_t(x_0x_1x_2) := 0$ if $(x_0x_1x_2) \notin T(G')$. $\square$ 

\medskip

Actually, Theorem~\ref{th:W1OrientGeod} can be strengthened in the following way:

\begin{prop}\label{prop:W1gpositive}
In Theorem~\ref{th:W1OrientGeod}, we can replace the assertion $\forall (xy) \in E(G) \ , \ g_t(xy) \geq 0$ by $\forall (xy) \in E(G) \ , \ g_t(xy) > 0$.
\end{prop}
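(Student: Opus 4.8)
The plan is to exploit the freedom in the choice of $(g_t)$: the family produced in Theorem~\ref{th:W1OrientGeod} is only constrained by $\frac{\partial}{\partial t}f_t = -\nabla g_t$, so any two admissible families differ by an element of $\ker(\nabla)$, i.e. by a divergence-free flow (a circulation) on the oriented graph $E(G)$. Adding such a flow leaves the first defining property untouched, so the whole task reduces to adding a suitable circulation to the specific $g_t$ built in the proof of Theorem~\ref{th:W1OrientGeod} so as to push its value on every edge strictly above zero, while still being able to recover a family $(h_t)$ with $\frac{\partial}{\partial t}g_t = -\nabla h_t$. Recall that the $g_t$ of that proof is supported on a spanning tree $G'$, is strictly positive on the edges of $G'$, and vanishes on every edge of $E(G)\setminus E(G')$; it is therefore exactly the off-tree edges that must be lifted.

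For the concrete construction I would use the fundamental circulations of the spanning tree. For each off-tree edge $e=(xy)$, adjoining $e$ to $G'$ creates a unique cycle; let $c_e$ be its signed indicator, oriented so that $e$ is traversed along its orientation. Then $\nabla c_e = 0$, $c_e(e)=1$, $c_e$ vanishes on every other off-tree edge, and $c_e$ takes values in $\{0,\pm 1\}$ on the tree edges. I would then set
\[
\tilde g_t \;:=\; g_t + \sum_{e \in E(G)\setminus E(G')} \varepsilon_e\, c_e
\]
for constants $\varepsilon_e>0$ to be fixed. Since each $c_e\in\ker(\nabla)$ we keep $\frac{\partial}{\partial t}f_t=-\nabla \tilde g_t$; on every off-tree edge $\tilde g_t(e)=\varepsilon_e>0$; and because the perturbation is independent of $t$ we have $\frac{\partial}{\partial t}\tilde g_t=\frac{\partial}{\partial t}g_t$, whence $\sum_{(xy)\in E(G)} \frac{\partial}{\partial t}\tilde g_t(xy)=\sum_{(xy)\in E(G)}\frac{\partial}{\partial t}g_t(xy)=0$. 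Applying Lemma~\ref{lem:unablag} to the oriented edge-graph $E(G)$ (whose own edge set is $T(G)$) then again yields a family $(\tilde h_t)$ with $\frac{\partial}{\partial t}\tilde g_t=-\nabla \tilde h_t$, exactly as at the end of the proof of Theorem~\ref{th:W1OrientGeod}. Thus the only condition left to secure is strict positivity on the tree edges, i.e. $g_t(f)+\sum_e \varepsilon_e c_e(f)>0$ for every $f\in E(G')$.

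The main obstacle is precisely this last positivity requirement, and it is the genuinely delicate point of the statement. Because every oriented path is a geodesic (Theorem~\ref{prop:GeodG}), the $W_1$-orientation is acyclic, so no circulation is strictly positive on all edges; one cannot merely add a positive flow, and the strict positivity of $g_t$ on the tree must serve as a reservoir absorbing the (necessarily sign-changing) perturbations $c_e$. On a compact subinterval of $(0,1)$ the map $t\mapsto g_t(f)$ is continuous and strictly positive, hence bounded below by some $\delta>0$, and choosing the $\varepsilon_e$ so small that $\sum_e \varepsilon_e\,\max_{f}|c_e(f)|<\delta$ keeps every tree edge strictly positive while lifting each off-tree edge to $\varepsilon_e$. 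The care needed near the endpoints is unavoidable: already for the binomial interpolation on a path one has $g_0=0$ on an interior edge, so the strict positivity of the whole family can only be expected for $t$ in the open interval, and the coefficients $\varepsilon_e$ must be calibrated to the lower bound $\delta$ available on each compact subinterval.
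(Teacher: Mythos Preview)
Your approach is genuinely different from the paper's, and the difference matters. The paper does not perturb a single spanning-tree solution by circulations; instead it takes the convex combination
\[
g_t(xy) := \frac{1}{|\mathcal{T}|}\sum_{G'\in\mathcal{T}} g_t^{G'}(xy)
\]
over \emph{all} spanning trees $G'$ of the $W_1$-oriented graph. Since each $g_t^{G'}$ is nonnegative everywhere and, for any given edge $(xy)$, there is some spanning tree $G''$ containing $(xy)$ on which $g_t^{G''}(xy)>0$, the average is strictly positive on every edge, at every $t$ for which the tree-solutions are strictly positive. No uniform-in-$t$ lower bound is ever needed, and the same averaging applied to the $h_t^{G'}$ immediately produces the companion family $(h_t)$.

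Your construction, by contrast, hinges precisely on such a uniform bound. Because the fundamental circulations $c_e$ necessarily take negative values on some tree edges (the orientation is acyclic), the choice of the constants $\varepsilon_e$ must be dominated by $\displaystyle\inf_{t}\min_{f\in E(G')} g_t^{G'}(f)$. You correctly observe that this infimum over the closed interval can vanish (your binomial example is exactly right: on a path of length $n\geq 2$ one has $g_0(1,2)=0$), so you retreat to compact subintervals of $(0,1)$. But then the $\varepsilon_e$ depend on the chosen subinterval, and you no longer obtain a \emph{single} family $(\tilde g_t)$ that is strictly positive for all $t\in(0,1)$ simultaneously---which is what the proposition asserts. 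This is the genuine gap: the compact-subinterval calibration produces a different perturbed family for each subinterval, not one global family.

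The paper's averaging trick sidesteps this entirely, because a convex combination of nonnegative functions is positive as soon as one summand is, pointwise in $t$; there is nothing to calibrate. If you wanted to rescue your approach, you would need time-dependent coefficients $\varepsilon_e(t)$ proportional to $\min_{f\in E(G')} g_t^{G'}(f)$, and then re-derive $(h_t)$ using $\sum_{(xy)} c_e(xy)=0$ (which does hold, since the potential $u$ from the proof of Theorem~\ref{th:W1OrientGeod} satisfies $(\partial u)(xy)=1$ on every oriented edge). That is workable but noticeably heavier than the paper's one-line averaging.
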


\begin{proof}
The proof of Theorem~\ref{th:W1OrientGeod} allowed us to construct, given a spanning tree $G' \subset G$, a family of functions $(g_t^{G'})$ such that $g_t^{G'}(xy)>0$ when $(xy) \in E(G')$ and $g_t^{G'}(xy)=0$ when $(xy) \notin E(G')$. But for each edge $(x_0y_0) \in E(G)$ there exists a spanning tree $G' \subset G$ with $(x_0y_0) \in E(G')$. We define a family $(g_t) : E(G) \rightarrow \mathbb{R}$ as the barycenter
\begin{equation*}
\forall (xy) \in E(G) \ , \ g_t(xy) := \frac{1}{\mathcal{|T|}} \sum_{G' \in  \mathcal{T}} g_t^{G'}(xy),
\end{equation*}
where $\mathcal{T}$ is the (finite) set of spanning trees for $G$. Then $g_t > 0$ and satisfies the conditions of Theorem~\ref{th:W1OrientGeod}. We finally construct a suitable family $(h_t)$ by defining $ h_t := \frac{1}{\mathcal{T}} \sum_{G' \in  \mathcal{|T|}} h_t^{G'}$, where $(h_t^{G'})$ is constructed from $(g_t^{G'})$ as in the proof of Theorem~\ref{th:W1OrientGeod}.
\end{proof}

\subsection{Definition of $W_{1,+}$-geodesics}

Having now constructed an orientation of $G$ associated to each couple of finitely supported probability distributions $f_0,f_1 \in \mathcal{P}(G)$, we propose a definition of $W_{1,+}$-geodesic inspired by Proposition~\ref{prop:BBContraction}:

\begin{defi}\label{def:BBCurves}
Let $G$ be a graph, $W_1$-oriented with respect to a couple of finitely supported probability distributions $f_0,f_1$. A family  $(f_t)$ is said to be a $W_{1,+}$-geodesic if:
\begin{enumerate}
\item The curve $(f_t)$ is a $W_1$-geodesic.
\item There exists two families  $(g_t)$ and $(h_t)$ defined respectively on $E(G)$ and $T(G)$ such that $$\frac{\partial}{\partial t} f_t = - \nabla g_t \ , \ \frac{\partial}{\partial t} g_t = - \nabla h_t.$$
\item For every $(xy) \in E(G)$ we have $g_t(xy) >0$. 
\item The triple $(f_t,g_t,h_t)$ satisfies the Benamou-Brenier equation
\begin{equation} \label{eq:BBgraph}
\forall (x_0x_1x_2) \in T(G) \ , \ f_t(x_1) h_t(x_0x_1x_2) = g_t(x_0x_1)g_t(x_1x_2).
\end{equation}
\end{enumerate}
\end{defi}

\begin{remark} In the sequel,the assertion ``let $(f_t)$ be a $W_{1,+}$-geodesic'' means ``let $((f_t),(g_t),(h_t))$ be a triple of families of functions satisfying the conditions of Definition~\ref{def:BBCurves}''. This is an abuse because nothing is a priori known about the uniqueness of the families $(g_t)$ and $(h_t)$ associated to a $W_{1,+}$-geodesic. 
\end{remark}

\begin{remark} We can check that any contraction of measure on a graph is also a $W_{1,+}$-geodesic: if $f_0=\delta_o$ is a Dirac measure, then the set $\Pi_1(f_0,f_1)$ has only one element, and it easy to prove that the $W_1$-orientation with respect to $f_0,f_1$ coincide with the orientation used for contraction of measures. Proposition~\ref{prop:BBContraction} shows that the other points of Definition~\ref{def:BBCurves} are satisfied by contraction families. 
\end{remark}

It is possible to state \eqref{eq:BBgraph} in terms of two different velocity fields:

\begin{prop} \label{prop:BBCvelocity}
Let $(f_t)_{t \in [0,1]}$ be a $W_{1,+}$-geodesic on $G$.  We define the velocity fields $v_{+,t}$ and $v_{-,t}$ by
\begin{equation} \label{eq:defvfield}
v_{+,t}(x_0x_1) := \frac{g_t(x_0x_1)}{f_t(x_0)} \ , \ v_{-,t}(x_0x_1) := \frac{g_t(x_0x_1)}{f_t(x_1)}
\end{equation}
and the velocity functions $V_{+,t}$ and $V_{-,t}$ by
\begin{equation} \label{eq:defVfunction}
V_{+,t}(x_1) := \sum_{x_2 \in \mathcal{F}(x_1)} v_{+,t}(x_1x_2) \ , \ V_{-,t}(x_1) := \sum_{x_0 \in \mathcal{E}(x_1)} v_{-,t}(x_0x_1).
\end{equation}
The following differential equations then hold:
\begin{equation}
\frac{\partial}{\partial t} v_{+,t}(x_0x_1) = - v_{+,t}(x_0x_1) \left[V_{+,t}(x_1)-V_{+,t}(x_0)\right],
\end{equation}
\begin{equation}
\frac{\partial}{\partial t} v_{-,t}(x_0x_1) = - v_{-,t}(x_0x_1) \left[V_{-,t}(x_1)-V_{-,t}(x_0)\right].
\end{equation}
\end{prop}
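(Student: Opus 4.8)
The plan is to prove the two velocity-field equations directly by computing $\frac{\partial}{\partial t} v_{\pm,t}$ from the definition $v_{+,t}(x_0x_1) = g_t(x_0x_1)/f_t(x_0)$, substituting the evolution equations $\partial_t g_t = -\nabla h_t$ and $\partial_t f_t = -\nabla g_t$, and then using the Benamou-Brenier equation~\eqref{eq:BBgraph} to recognize the resulting expression as the claimed product. The two cases ($v_+$ and $v_-$) are symmetric under reversing the orientation, so it suffices to carry out one of them carefully — say $v_+$ — and indicate that the other follows by the analogous computation.

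**First I would** differentiate the quotient:
\begin{equation*}
\frac{\partial}{\partial t} v_{+,t}(x_0x_1) = \frac{(\partial_t g_t(x_0x_1)) f_t(x_0) - g_t(x_0x_1)(\partial_t f_t(x_0))}{f_t(x_0)^2}.
\end{equation*}
Now I substitute $\partial_t g_t(x_0x_1) = -\nabla h_t(x_0x_1)$ and $\partial_t f_t(x_0) = -\nabla g_t(x_0)$, writing out both divergences from Definition~\ref{def:DivergenceOriented}:
\begin{equation*}
\nabla h_t(x_0x_1) = \sum_{x_2 \in \mathcal{F}(x_1)} h_t(x_0x_1x_2) - \sum_{x_{-1} \in \mathcal{E}(x_0)} h_t(x_{-1}x_0x_1),
\end{equation*}
\begin{equation*}
\nabla g_t(x_0) = \sum_{x_1' \in \mathcal{F}(x_0)} g_t(x_0x_1') - \sum_{x_{-1} \in \mathcal{E}(x_0)} g_t(x_{-1}x_0).
\end{equation*}

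**The key step** is to rewrite each $h_t$ term using~\eqref{eq:BBgraph}, which gives $h_t(x_0x_1x_2) = g_t(x_0x_1)g_t(x_1x_2)/f_t(x_1)$ and $h_t(x_{-1}x_0x_1) = g_t(x_{-1}x_0)g_t(x_0x_1)/f_t(x_0)$. Factoring out $g_t(x_0x_1)$ throughout, I expect the numerator to collapse into $g_t(x_0x_1)$ times a bracket that, after dividing by $f_t(x_0)^2$ and recognizing the definitions in~\eqref{eq:defvfield} and~\eqref{eq:defVfunction}, reproduces exactly $-v_{+,t}(x_0x_1)[V_{+,t}(x_1) - V_{+,t}(x_0)]$. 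Concretely, the terms involving $\mathcal{F}(x_1)$ assemble into $v_{+,t}(x_0x_1)V_{+,t}(x_1)$, while the incoming terms at $x_0$ (from both $\nabla h_t$ and the correction $-g_t(x_0x_1)\nabla g_t(x_0)/f_t(x_0)^2$) must combine to produce $-v_{+,t}(x_0x_1)V_{+,t}(x_0)$.

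**The main obstacle** I anticipate is bookkeeping the terms at the vertex $x_0$: the contribution $-v_{+,t}(x_0x_1)\nabla g_t(x_0)/f_t(x_0)$ contains both an outgoing sum over $\mathcal{F}(x_0)$ and an incoming sum over $\mathcal{E}(x_0)$, and the latter must cancel exactly against the $\mathcal{E}(x_0)$-term arising from $\nabla h_t(x_0x_1)$ after the Benamou-Brenier substitution. Verifying that $V_{+,t}(x_0) = \sum_{x_1' \in \mathcal{F}(x_0)} g_t(x_0x_1')/f_t(x_0)$ emerges cleanly while all incoming-edge contributions to $x_0$ disappear is the delicate part; the symmetry of the $v_-$ computation (where the roles of $\mathcal{E}$ and $\mathcal{F}$ and of the two endpoints in the denominator are interchanged) means no additional idea is needed once the $v_+$ case is settled.
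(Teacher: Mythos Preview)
Your proposal is correct and follows essentially the same route as the paper: differentiate the quotient, insert $\partial_t f_t=-\nabla g_t$ and $\partial_t g_t=-\nabla h_t$, replace each $h_t$ via the Benamou--Brenier relation, and observe that the two $\mathcal{E}(x_0)$-sums cancel to leave $v_{+,t}(x_0x_1)[V_{+,t}(x_0)-V_{+,t}(x_1)]$. The paper then simply notes that the $v_-$ identity is proven by similar methods, just as you indicate.
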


\begin{proof} We use the definitions of $g_t$ and $h_t$ and then apply the Benamou-Brenier equation~\eqref{eq:BBgraph} to write:
\begin{eqnarray*}
\frac{\partial}{\partial t} v_{+,t}(x_0x_1) &=& \frac{g_t(x_0x_1)}{f_t(x_0)^2} \left[\sum_{\tilde{x}_1 \in \mathcal{F}(x_0)} g_t(x_0 \tilde{x}_1) - \sum_{x_{-1} \in \mathcal{E}(x_0)} g_t(x_{-1}x_0) \right] \\
&& + \frac{1}{f_t(x_0)} \left[-\sum_{x_2 \in \mathcal{F}(x_1)} \frac{g_t(x_0x_1)g_t(x_1x_2)}{f_t(x_1)}+ \sum_{x_{-1} \in \mathcal{E}(x_0)} \frac{g_t(x_{-1}x_0)g_t(x_0x_1)}{f_t(x_0)} \right] \\
&=& v_{+,t}(x_0x_1) \left[\sum_{\tilde{x}_1 \in \mathcal{F}(x_0)} \frac{g_t(x_0\tilde{x}_1)}{f_t(x_0)} - \sum_{x_2 \in \mathcal{F}(x_1)} \frac{g_t(x_1x_2)}{f_t(x_1)} \right] \\
&=& v_{+,t}(x_0x_1) \left[V_{+,t}(x_0) -V_{+,t}(x_1) \right].
\end{eqnarray*}
The second formula is proven by similar methods.
\end{proof}

\medskip

We now give some heuristic arguments explaining the terminology '$W_{1,+}$-geodesic'. Let us consider the minimization problem described by equation~\eqref{eq:BBproblem} of Theorem~\ref{th:WpBB}, when the paramater $p=1+\ve$ is close to $1$. We use the expansion $a^{1+\ve} = a \exp(\ve \log(a)) = a+ \ve a \log(a) + O(\ve^2)$, valid for $a>0$, to write
$$\int_M \int_0^1 |v_t(x)|^p d\mu_t(x) dt = \int_M \int_0^1 |v_t(x)| d\mu_t(x) dt+ \ve \int_M \int_0^1 |v_t(x)|\log(|v_t(x)|) d\mu_t(x) dt + O(\ve^2).$$
The integral $\int_M \int_0^1 |v_t(x)| d\mu_t(x) dt$ is exactly equation~\eqref{eq:BBproblem} for $p=1$. We thus know, by Theorem~\ref{th:WpBB} that the minimizers of this integral over the set of families $(f_t)$ of probability measures with $f_0,f_1$ prescribed and $\frac{\partial}{\partial t}f_t(x) + \nabla \cdot (v_t(x) f_t(x))=0$ are exaclty the $W_1$-geodesics joining $f_0$ to $f_1$. This suggests the following:

\begin{defi} \label{defi:W1plusCont}
We say that a curve $(f_t)$ of probability measures on a Riemannian manifold $M$ is a $W_{1,+}$-geodesic on $M$ if it is solution to the minimization problem $$\inf \int_M \int_0^1 |v_t(x)|\log(|v_t(x)|) d\mu_t(x) dt,$$ where the infimum is taken over the set of all $W_1$-geodesics between $f_0$ and $f_1$ and where the velocity field $(v_t)$ is defined by the continuity equation $$\frac{\partial}{\partial t} f_t(x) + \nabla \cdot (v_t(x) f_t(x))=0.$$
\end{defi}

The formal optimality condition on $(v_t)$ obtained by applying Euler-Lagrange equations is the same as for $W_p$-geodesics:
$$\frac{\partial}{\partial t} v_t(x) = - v_t(x) \nabla v_t(x).$$

The next proposition shows that $W_{1,+}$-geodesics on a graph can be related to a minimization problem similar to the continuous one described in Definition~\ref{defi:W1plusCont}:

\begin{prop} \label{prop:W1plusProblem}
Let $G$ be a $W_1$-orientated with respect to $f_0,f_1$ finitely supported. We consider the problem
\begin{equation} \label{eq:W1plusBB}
\inf \mathcal{I}_+(f,g) := \inf \int_0^1 \sum_{x\rightarrow y} v_{+,t}(xy) \log(v_{+,t}(xy)) f_t(x),
\end{equation}
where the infimum is taken over the set of $W_1$-geodesics $(f_t)$ between $f_0$ and $f_1$ such that the velocity $v_{+,t}(xy)$ is defined by equation~\eqref{eq:defvfield} from a positive family $(g_t)$ with $\frac{\partial}{\partial t}f_t(x) = -\nabla g_t(x)$.

\medskip

We suppose that there exists a $W_{1,+}$-geodesic $(f_t)$ joining $f_0$ to $f_1$. Then $(f_t)$ is a critical point for $\mathcal{I}_+$ in the following sense: if $(u_t)$ is a family of functions defined on $E(G)$ satisfying the boundary conditions $u_0(xy)=u_1(xy)=0$, then
\begin{equation}
\mathcal{I}_+\left(f+ \eta \nabla u,g- \eta \frac{\partial u}{\partial t}\right) = \mathcal{I}_+(f,g) +O(\eta^2).
\end{equation}
\end{prop}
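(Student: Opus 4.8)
The plan is to reduce the claim to the vanishing of the first-order (in $\eta$) term of $\mathcal{I}_+$, via a standard first-variation argument in which the two differential constraints of Definition~\ref{def:BBCurves} let me pass $t$-derivatives between the $f$- and $g$-variations. First I would record that the perturbation is admissible to first order: writing $\tilde f := f + \eta\nabla u$ and $\tilde g := g - \eta\,\partial u/\partial t$, one has $\frac{\partial}{\partial t}\tilde f_t = \frac{\partial}{\partial t}f_t + \eta\nabla\frac{\partial u_t}{\partial t} = -\nabla\tilde g_t$, so the continuity equation $\frac{\partial}{\partial t}f = -\nabla g$ is preserved exactly, while $u_0 = u_1 = 0$ gives $\tilde f_0 = f_0$ and $\tilde f_1 = f_1$. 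It is then convenient to eliminate the velocity field using $v_{+,t}(xy)f_t(x) = g_t(xy)$, rewriting the Lagrangian as $\mathcal{I}_+(f,g) = \int_0^1 \sum_{x\to y} g_t(xy)\big(\log g_t(xy) - \log f_t(x)\big)\,dt$, which makes the dependence on $(f,g)$ transparent.

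Next I would differentiate the integrand $\phi$. Its derivative in the edge variable is $\partial_{g(xy)}\phi = \log g_t(xy) - \log f_t(x) + 1 = \log v_{+,t}(xy) + 1$, and in the vertex variable $\partial_{f(x)}\phi = -\frac{1}{f_t(x)}\sum_{y\in\mathcal{F}(x)} g_t(xy) = -V_{+,t}(x)$ by the very definition~\eqref{eq:defVfunction} of $V_{+,t}$. Substituting $\delta g_t(xy) = -\partial u_t(xy)/\partial t$ and $\delta f_t(x) = (\nabla u_t)(x)$, the first-order term of $\mathcal{I}_+$ splits into
$$-\int_0^1 \sum_{x\to y}\big(\log v_{+,t}(xy)+1\big)\frac{\partial u_t(xy)}{\partial t}\,dt \;-\; \int_0^1\sum_{x\in G} V_{+,t}(x)(\nabla u_t)(x)\,dt,$$
which I shall call Term~I and Term~II.

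For Term~I, an integration by parts in $t$ annihilates the boundary contribution (because $u_0 = u_1 = 0$) and leaves $\int_0^1\sum_{x\to y}\frac{\partial}{\partial t}\big(\log v_{+,t}(xy)\big)\,u_t(xy)\,dt$. Here the velocity equation of Proposition~\ref{prop:BBCvelocity}, namely $\frac{\partial}{\partial t} v_{+,t}(xy) = -v_{+,t}(xy)\big(V_{+,t}(y)-V_{+,t}(x)\big)$, yields $\frac{\partial}{\partial t}\log v_{+,t}(xy) = V_{+,t}(x)-V_{+,t}(y)$, so Term~I equals $\int_0^1\sum_{x\to y}\big(V_{+,t}(x)-V_{+,t}(y)\big)u_t(xy)\,dt$. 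For Term~II, the adjunction~\eqref{eq:nablapartialadjoint} rewrites $\sum_{x}V_{+,t}(x)(\nabla u_t)(x) = -\sum_{x\to y} u_t(xy)\big(V_{+,t}(y)-V_{+,t}(x)\big)$, so Term~II equals $\int_0^1\sum_{x\to y}u_t(xy)\big(V_{+,t}(y)-V_{+,t}(x)\big)\,dt$. The two integrals are opposite and cancel, the first-order term vanishes, and the claimed $O(\eta^2)$ estimate follows.

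The main obstacle is not algebraic but is checking that the computation is meaningful: I must ensure $\log v_{+,t}(xy)$ and $1/f_t(x)$ stay finite on the relevant support, which rests on the strict positivity $g_t(xy)>0$ from Definition~\ref{def:BBCurves} together with the fact that $f_t(x)>0$ whenever some outgoing $g_t(xy)$ is positive. I would also emphasize that the statement is purely a first-variation (\emph{critical point}) assertion, so I need not verify that the perturbed curve $\tilde f$ remains an honest $W_1$-geodesic; it suffices that the constraints entering the Lagrangian hold to first order, and this is guaranteed by the identity $\frac{\partial}{\partial t}\tilde f = -\nabla\tilde g$ established at the outset.
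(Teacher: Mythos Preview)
Your argument is correct and follows essentially the same route as the paper: you expand the functional to first order, obtain the two contributions coming from the $g$- and $f$-variations, integrate the first by parts in $t$ using $u_0=u_1=0$, invoke the velocity equation of Proposition~\ref{prop:BBCvelocity} to compute $\partial_t\log v_{+,t}$, and then use the $\nabla/\partial$ adjunction to see that the two terms cancel. The only cosmetic difference is that you first rewrite the integrand as $g_t(xy)\bigl(\log g_t(xy)-\log f_t(x)\bigr)$ and differentiate in $(f,g)$ separately, whereas the paper expands $\mathcal{I}_+$ directly; the resulting first-order term and the subsequent manipulations are identical.
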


\textbf{Remark.}Recall that, given a $W_1$-geodesic $(f_t)$, the continuity equation $\frac{\partial}{\partial t}f_t(x) = -\nabla g_t(x)$ may be solved by a family $(g_t)$ which is not necessarily always positive. We restrict ourselves to the families of positive $(g_t)$, which always exist by Proposition~\ref{prop:W1gpositive}, in order to write $|v_{+,t}(xy)|=v_{+,t}(xy)$.

\medskip

\textit{Proof of Proposition~\ref{prop:W1plusProblem}.} When $\eta$ is small, we have the expansion 
$$\mathcal{I}_+\left(f+ \eta \nabla u,g- \eta \frac{\partial u}{\partial t}\right) - \mathcal{I}_+(f,g)$$ $$= - \eta \int_0^1 \sum_{x \rightarrow y} \frac{\partial}{\partial t} u_t(x,y)\left(1+\log(v_{+,t}(xy)) \right)+ (\nabla u_t)(x) v_t(xy) dt + O(\eta^2).$$

On the other hand, we use the boundary conditions $u_0=u_1=0$ to write:
\begin{eqnarray*}
\int_0^1 \sum_{x \rightarrow y} \frac{\partial}{\partial t} u_t(x,y)\left(1+\log(v_{+,t}(xy)) \right) dt &=& - \int_0^1 \sum_{x \rightarrow y}  u_t(x,y) \frac{\partial}{\partial t} \left(1+\log(v_{+,t}(xy)) \right) dt\\
&=& -\int_0^1 \sum_{x \rightarrow y} u_t(xy) \frac{1}{v_t(xy)} \frac{\partial}{\partial t}v_t(xy) dt\\
&=&  \int_0^1 \sum_{x \rightarrow y} u_t(xy) [V_{+,t}(y)-V_{+,t}(x)] dt\\
&=& -\int_0^1 \sum_{x \in G} (\nabla u_t)(x) V_{+,t}(x) dt\\
&=& \int_0^1 \sum_{x \rightarrow y} (\nabla u_t)(x) v_t(xy) dt, 
\end{eqnarray*}

which proves that $\mathcal{I}_+\left(f+ \eta \nabla u,g- \eta \frac{\partial u}{\partial t}\right) = \mathcal{I}_+(f,g) +O(\eta^2).$ $\square$

\medskip

\textbf{Remark. }Similarly, it can be proven that a $W_{1,+}$-geodesic is also critical for the functional \begin{equation}
\inf \mathcal{I}_-(f,g) := \inf \int_0^1 \sum_{x\rightarrow y} v_{-,t}(xy) \log(v_{-,t}(xy)) f_t(x).
\end{equation}

\section{$W_{1,+}$-geodesics as mixtures of binomial distributions} \label{sec:Structure}

$W_{1,+}$-geodesics have been constructed as generalizations of contraction families, which have been defined as mixture of binomial distributions. In this section, we fix a $W_{1,+}$-geodesic $(f_t)$ on $G$, joining two finitely supported probability distributions $f_0,f_1 \in \mathcal{P}(G)$. It will always be assumed that the graph $G$ is $W_1$-oriented with respect to $f_0,f_1$ and that every path is an oriented path, thus a geodesic, by Theorem~\ref{prop:GeodG}. 

\medskip

The main purpose of this section is to prove Theorem~\ref{thm:BBCBinom}: $(f_t)$ can also be expressed as a mixture of binomial measures, with respect to a coupling $\pi \in \Pi(f_0,f_1)$ solution to a certain minimization problem. The key ingredients to the proof of this theorem are the study of the behaviour of $(f_t)$ along particular geodesics of $G$, called extremal and semi-extremal geodesics, and the construction of two sub-Markov kernels $K,K^\star$ on $G$ associated to $(f_t)$.

\subsection{Extremal geodesics}

Recall that we write $x_2 \in \mathcal{F}(x_1)$ and $x_1 \in \mathcal{E}(x_2)$ if $x_1 \leq x_2$ and $d(x_1,x_2)=1$ or equivalently if $(x_1x_2)$ is an oriented edge of $G$. If $\gamma$ is a geodesic of $G$, it will be sometimes convenient to use the notation $\gamma_i := \gamma(i)$. 

\begin{defi}
Let $\gamma$ be a geodesic on $G$. 
\begin{itemize}
\item If $L(\gamma) = n \geq 2$, we associate to $\gamma$ the positive function
\begin{equation}
C_\gamma(t) := \frac{g_t(\gamma_0\gamma_1) \cdots g_t(\gamma_{n-1}\gamma_n)}{f_t(\gamma_1) \cdots f_t(\gamma_{n-1})}.
\end{equation}
\item If $L(\gamma)=1$, we define $C_\gamma(t) := g_t(\gamma_0\gamma_1)$.
\item If $L(\gamma)=0$, we define $C_\gamma(t) := f_t(\gamma_0)$.
\end{itemize}
\end{defi}

\begin{prop}\label{prop:Cgammadiff}
The function $C_\gamma(t)$ satisfy
\begin{equation} \label{eq:Cgammadiff}
\frac{\partial}{\partial t} C_\gamma(t) =  \sum_{x_0 \in \mathcal{E}(\gamma_0)} C_{x_0 \cup \gamma}(t) -\sum_{x_2 \in \mathcal{F}(\gamma_n)} C_{\gamma \cup x_2}(t),
\end{equation}
where $x_0 \cup \gamma$ (resp. $\gamma_n \cup x_2$) is the geodesic $x_0,\gamma_0,\ldots \gamma_n$ (resp. $\gamma_0,\ldots \gamma_n,x_2$).
\end{prop}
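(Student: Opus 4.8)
The plan is to compute $\frac{\partial}{\partial t} C_\gamma(t)$ by logarithmic differentiation, since $C_\gamma$ is a product/quotient of the functions $g_t$ and $f_t$. Writing $\log C_\gamma(t) = \sum_{i=0}^{n-1} \log g_t(\gamma_i\gamma_{i+1}) - \sum_{i=1}^{n-1} \log f_t(\gamma_i)$, we obtain
\begin{equation*}
\frac{\partial_t C_\gamma}{C_\gamma} = \sum_{i=0}^{n-1} \frac{\partial_t g_t(\gamma_i\gamma_{i+1})}{g_t(\gamma_i\gamma_{i+1})} - \sum_{i=1}^{n-1} \frac{\partial_t f_t(\gamma_i)}{f_t(\gamma_i)}.
\end{equation*}
Now I would substitute the defining differential equations of the $W_{1,+}$-geodesic, namely $\partial_t f_t(\gamma_i) = -\nabla g_t(\gamma_i)$ and $\partial_t g_t(\gamma_i\gamma_{i+1}) = -\nabla h_t(\gamma_i\gamma_{i+1})$, expanding each divergence according to Definition~\ref{def:DivergenceOriented}. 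The key point is that the Benamou-Brenier equation~\eqref{eq:BBgraph} lets one rewrite the terms $h_t(\cdot)$ and the interior $g_t(\cdot)$ ratios so that each surviving contribution becomes a value of $C$ on a geodesic obtained from $\gamma$ by adjoining one vertex at an end or by a one-step modification.

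The heart of the computation is a telescoping cancellation. When I expand $\nabla h_t(\gamma_i\gamma_{i+1}) = \sum_{x \in \mathcal{F}(\gamma_{i+1})} h_t(\gamma_i\gamma_{i+1}x) - \sum_{x \in \mathcal{E}(\gamma_i)} h_t(x\gamma_i\gamma_{i+1})$ and multiply through by $C_\gamma/g_t(\gamma_i\gamma_{i+1})$, the Benamou-Brenier relation $f_t(\gamma_{i+1}) h_t(\gamma_i\gamma_{i+1}x) = g_t(\gamma_i\gamma_{i+1}) g_t(\gamma_{i+1}x)$ converts the ``forward'' piece at level $i$ into a sum over $C$-values where $\gamma_{i+1}x$ replaces the edge leaving $\gamma_{i+1}$ along $\gamma$. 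I expect this forward piece at level $i$ to cancel against the ``backward'' piece coming from the divergence term $\nabla g_t(\gamma_{i+1})$ inside $-\partial_t f_t(\gamma_{i+1})/f_t(\gamma_{i+1})$, precisely for the internal vertices $\gamma_1,\dots,\gamma_{n-1}$. The only terms that survive are those at the two ends: the $\mathcal{E}(\gamma_0)$-contributions, which assemble into $\sum_{x_0 \in \mathcal{E}(\gamma_0)} C_{x_0\cup\gamma}(t)$, and the $\mathcal{F}(\gamma_n)$-contributions, which assemble into $-\sum_{x_2 \in \mathcal{F}(\gamma_n)} C_{\gamma\cup x_2}(t)$.

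The main obstacle will be bookkeeping the cancellation cleanly rather than any conceptual difficulty: one must match, for each interior edge, the forward-type term arising from the $g$-equation at one edge against the backward-type term arising from the $f$-equation at the adjacent vertex, and verify that after applying~\eqref{eq:BBgraph} the two expressions are literally equal $C$-values with opposite signs. I would handle the boundary cases $L(\gamma)=1$ and $L(\gamma)=0$ separately, since then $C_\gamma$ is defined directly as $g_t(\gamma_0\gamma_1)$ or $f_t(\gamma_0)$; there the formula reduces immediately to the differential equations $\partial_t g_t = -\nabla h_t$ and $\partial_t f_t = -\nabla g_t$ respectively, once one checks that the degenerate $C$-values on adjoined geodesics match the terms in those divergences. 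A careful induction on $n$, or equivalently a single telescoping sum indexed by the vertices of $\gamma$, should package the interior cancellation uniformly and yield~\eqref{eq:Cgammadiff}.
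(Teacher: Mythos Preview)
Your approach is correct and is essentially the same computation as the paper's. The paper streamlines the bookkeeping by first rewriting $C_\gamma(t) = f_t(\gamma_0)\, v_{+,t}(\gamma_0\gamma_1)\cdots v_{+,t}(\gamma_{n-1}\gamma_n)$ and invoking Proposition~\ref{prop:BBCvelocity}, so that the logarithmic derivative becomes $(-V_{+,t}(\gamma_0)+V_{-,t}(\gamma_0)) - \sum_{i}[V_{+,t}(\gamma_{i+1})-V_{+,t}(\gamma_i)]$, which telescopes at once to $V_{-,t}(\gamma_0)-V_{+,t}(\gamma_n)$; your direct substitution of $\partial_t f_t=-\nabla g_t$, $\partial_t g_t=-\nabla h_t$ together with the Benamou--Brenier relation yields exactly the same cancellation, just carried out edge by edge rather than via the pre-packaged velocity identity.
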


\begin{proof} If $L(\gamma)=0$, equation~\eqref{eq:Cgammadiff} is equivalent to $\frac{\partial}{\partial t}f_t(\gamma_0) = -(\nabla g_t)(x_0)$, which is true by the definition of $(g_t)$. If $L(\gamma) \geq 1$, we notice that 
\begin{equation}\label{eq:Cgammavt}
C_\gamma(t) = f_t(\gamma_0) v_{+,t}(\gamma_0 \gamma_1) \cdots v_{+,t}(\gamma_{n-1}\gamma_n).
\end{equation}
Proposition~\ref{prop:BBCvelocity} gives:
\begin{eqnarray*}
\frac{1}{C_\gamma(t)} \frac{\partial}{\partial t} C_\gamma(t) &=& \frac{1}{f_t(\gamma_0)}\frac{\partial}{\partial t} f_t(\gamma_0) + \sum_{i=0}^{n-1} \frac{1}{v_{+,t}(\gamma_i \gamma_{i+1})} \frac{\partial}{\partial t} v_{+,t}(\gamma_i \gamma_{i+1}) \\
&=& \left(- V_{+,t}(\gamma_0)+V_{-,t}(\gamma_0)  \right) - \sum_{i=0}^{n-1} \left[V_{+,t}(\gamma_{i+1}) - V_{+,t}(\gamma_{i})\right] \\
&=& -V_{+,t}(\gamma_n)+V_{-,t}(\gamma_0) \\
&=& - \sum_{x_2 \in \mathcal{F}(\gamma_n)} v_{+,t}(\gamma_n x_2)+\sum_{x_0 \in \mathcal{E}(\gamma_0)} v_{-,t}(x_0 \gamma).
\end{eqnarray*}
Multiplying by $C_\gamma(t)$ and applying equation~\eqref{eq:Cgammavt} leads to the result.
\end{proof}

\medskip

Equation~\eqref{eq:Cgammadiff} takes a simpler form in the case where the set $\mathcal{E}(e_0(\gamma))$ (or $\mathcal{F}(e_1(\gamma))$, or both) is empty. This motivates the following:

\begin{defi} 
We define the particular subsets of vertices of $G$:
\begin{itemize}
 \item The set of initial vertices $\mathcal{A} \subset G$ contains every $x_1 \in G$ such that $\mathcal{E}(x_1)$ is empty.
 \item The set of final vertices $\mathcal{B} \subset G$ contains every $x_1 \in G$ such that $\mathcal{F}(x_1)$ is empty.
\end{itemize}
We also define the particular subsets of geodesics f $G$:
\begin{itemize}
\item The set $\EG$ of extremal geodesics contains every $\gamma \in \Gamma(G)$ with $e_0(\gamma) \in \mathcal{A}$, $e_1(\gamma) \in \mathcal{B}$.
\item The set $\SEG_{1,x}$ contains every $\gamma \in \Gamma(G)$ with $e_0(\gamma) \in \mathcal{A}$, $e_1(\gamma) =x $.
\item The set $\SEG_{1,x}$ contains every $\gamma \in \Gamma(G)$ with $e_0(\gamma)=x$, $e_1(\gamma) \in \mathcal{B}$.
\end{itemize}
If $e_0(\gamma) \in \mathcal{A}$ or $e_1(\gamma) \in \mathcal{B}$, the geodesic $\gamma$ is said to be semi-extremal.
\end{defi}

\begin{remark} The sets $\mathcal{A}$ and $\mathcal{B}$ are both non empty. If we suppose for instance that $\mathcal{B}$ is empty, then we can construct an infinite sequence $(x_n)_{n \geq 0}$ in $G$ such that $x_{n+1} \in \mathcal{F}(x_n)$. But, $f_0$ and $f_1$ being finitely supported and $G$ being locally finite, the set of oriented edges of $G$ is finite so $x_p=x_q$ for a couple of indices $q > p$. This means that there exists a non-trivial oriented path $\gamma$ joining $x_p$ to itself, which is a contradiction because $\gamma$ is a geodesic of $G$ by Proposition~\ref{prop:GeodG}. \end{remark}

\medskip

An immediate corollary of Proposition~\ref{prop:Cgammadiff} is the following:

\begin{prop}\label{prop:CgammaConst}
Let $\gamma$ be a geodesic of $G$.
\begin{itemize}
\item If $\gamma \in \EG$, then $C_\gamma(t)=C_\gamma$ is a constant function of $t$.
\item If $\gamma \in\SEG_{1,x}$ then $C_\gamma(t)$ is polynomial in $t$ and $  \degre (C_\gamma(t)) \leq \sup \{L(\tilde{\gamma}) : \tilde{\gamma} \in SE\Gamma_{2,x} \}.$
\item If $\gamma \in \SEG_{2,x}$ then $C_\gamma(t)$ is polynomial in $t$ and $ \degre (C_\gamma(t)) \leq \sup \{L(\tilde{\gamma}) : \tilde{\gamma} \in SE\Gamma_{1,x} \}.$
\end{itemize}
\end{prop}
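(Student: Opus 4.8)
The plan is to apply Proposition~\ref{prop:Cgammadiff} directly and exploit the vanishing of the boundary sums coming from the definitions of $\mathcal{A}$ and $\mathcal{B}$. Recall equation~\eqref{eq:Cgammadiff}:
\begin{equation*}
\frac{\partial}{\partial t} C_\gamma(t) = \sum_{x_0 \in \mathcal{E}(e_0(\gamma))} C_{x_0 \cup \gamma}(t) - \sum_{x_2 \in \mathcal{F}(e_1(\gamma))} C_{\gamma \cup x_2}(t).
\end{equation*}
First I would treat the extremal case. If $\gamma \in \EG$, then $e_0(\gamma) \in \mathcal{A}$ means $\mathcal{E}(e_0(\gamma))$ is empty, so the first sum is void; likewise $e_1(\gamma) \in \mathcal{B}$ makes $\mathcal{F}(e_1(\gamma))$ empty, killing the second sum. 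Hence $\frac{\partial}{\partial t} C_\gamma(t) = 0$, which gives that $C_\gamma$ is constant.

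Next I would handle $\gamma \in \SEG_{1,x}$, so $e_0(\gamma) \in \mathcal{A}$ and $e_1(\gamma)=x$. Again the left sum vanishes, leaving
\begin{equation*}
\frac{\partial}{\partial t} C_\gamma(t) = - \sum_{x_2 \in \mathcal{F}(x)} C_{\gamma \cup x_2}(t).
\end{equation*}
The crucial observation is that each $\gamma \cup x_2$ is again a semi-extremal geodesic with $e_0 \in \mathcal{A}$, hence lies in $\SEG_{1,x_2}$ with length exactly $L(\gamma)+1$. This sets up an induction on the quantity $n(\gamma) := \sup\{L(\tilde\gamma) - L(\gamma) : \tilde\gamma \in \SEG_{1,e_1(\gamma)}, \ \tilde\gamma \text{ extends } \gamma\}$, or more cleanly on the maximal remaining length $M(x) := \sup\{L(\tilde\gamma) : \tilde\gamma \in \SEG_{2,x}\}$ (the sup of lengths of semi-extremal geodesics emanating from $x$). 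The base case is $x \in \mathcal{B}$: then $\mathcal{F}(x)$ is empty, the sum is void, $C_\gamma$ is constant (degree $0$), matching $M(x)=0$. For the inductive step, every $x_2 \in \mathcal{F}(x)$ satisfies $M(x_2) \leq M(x) - 1$, since any semi-extremal geodesic from $x_2$ of length $\ell$ prepends to give one from $x$ of length $\ell+1$. By the induction hypothesis each $C_{\gamma \cup x_2}(t)$ is polynomial of degree at most $M(x_2) \leq M(x)-1$; integrating the finite sum in $t$ raises the degree by one, so $C_\gamma(t)$ is polynomial of degree at most $M(x)$, which is exactly the claimed bound $\sup\{L(\tilde\gamma) : \tilde\gamma \in \SEG_{2,x}\}$.

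The case $\gamma \in \SEG_{2,x}$ is symmetric: here $e_1(\gamma) \in \mathcal{B}$ kills the right sum and one runs the dual induction backward through $\mathcal{E}(x)$, with the degree controlled by $\sup\{L(\tilde\gamma) : \tilde\gamma \in \SEG_{1,x}\}$. I expect the only genuine subtlety to be bookkeeping: verifying that the extension $\gamma \cup x_2$ really remains a geodesic and stays in the correct semi-extremal class (this is guaranteed because every oriented path is a geodesic by Theorem~\ref{prop:GeodG}, and prepending/appending an oriented edge preserves the $\mathcal{A}$- or $\mathcal{B}$-endpoint), and confirming that the induction is well-founded, i.e. that $M(x)$ is finite. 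Finiteness follows from the remark that $\mathcal{B}$ is non-empty together with the finiteness of the oriented edge set: there is no infinite forward oriented path, so the lengths of semi-extremal geodesics from $x$ are bounded. Once these structural points are in place the degree count is a routine integration, so the main work is really just setting up the induction on the correct length parameter.
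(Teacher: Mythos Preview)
Your proposal is correct and follows essentially the same route as the paper: both use Proposition~\ref{prop:Cgammadiff} to kill the relevant boundary sum and then induct on the quantity $M(x)=\sup\{L(\tilde\gamma):\tilde\gamma\in\SEG_{2,x}\}$ (the paper denotes this $m(\gamma)$), reducing the degree by one at each step. Your write-up is in fact slightly more careful than the paper's in two places: you justify only the inequality $M(x_2)\leq M(x)-1$ (which is all the induction needs, whereas the paper asserts equality without comment), and you explicitly check that $\gamma\cup x_2$ is again a geodesic and that the induction is well-founded.
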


\begin{proof} If $\gamma \in \EG$, then the sets $\mathcal{E}(e_0(\gamma))$ and $\mathcal{F}(e_1(\gamma))$ are empty, which by Proposition~\ref{prop:Cgammadiff} shows that $C_\gamma$ is a constant function of $t$.
We prove the second point by induction on $m=m(\gamma) := \sup \{L(\tilde{\gamma}) : \tilde{\gamma} \in \SEG_{2,x}\}$, which only depends on the endpoint $e_1(\gamma)=x$ . If $m=0$ then $\gamma \in \EG$ and this case has been considered in the first point. We now fix a geodesic $\gamma \in \SEG_{1,x}$ such that $m(\gamma) \geq 1$. We apply Proposition~\ref{prop:Cgammadiff} and use the fact that $e_0(\gamma) \in \mathcal{A}$ to write:
\begin{equation}\label{eq:SEGdiff}
\frac{\partial}{\partial t} C_\gamma(t)= -\sum_{x_2 \in \mathcal{F}(x)} C_{\gamma\cup x_2}(t).
\end{equation}
It is easily shown that, for $z \in \mathcal{F}(x)$, $m(\gamma \cup \{z\})= m(\gamma)-1$, which proves by induction on $m$ that $C_\gamma(t)$ is polynomial in $t$ of degree less than $m(\gamma)$. 
\end{proof}

\subsection{Sub-Markov kernels associated to a $W_{1,+}$-geodesic}

The fact that the function $C_\gamma$ is constant and positive on extremal geodesics allows us to introduce a useful function on ordered subsets of $G$: 

\begin{defi} \label{def:mfromC}
Given an ordered $p$-uple $z_1 \leq z_2 \leq \ldots z_p$ of vertices of $G$, we define
\begin{equation}
m(z_1,\ldots z_p) := \sum_{\gamma \in E(z_1,\ldots z_p)} C_\gamma,
\end{equation}
where $E(z_1,\ldots z_p) \subset \EG$ is defined by:
\begin{equation}
\gamma \in E(z_1,\ldots z_p) \Longleftrightarrow \exists \ k_1 \leq \cdots \leq k_p \ , \ \gamma(k_i)=z_i.
\end{equation}
If $\gamma$ is a geodesic of $G$, we denote by $m(\gamma)$ the number $m(e_0(\gamma(0)),\cdots e_1(\gamma))$.
\end{defi}

\begin{prop}\label{prop:Exbiject}
For any family of vertices $x_0 \leq \cdots \leq x_m$ we have
\begin{equation*}
m(x_0,\ldots x_m) = \frac{m(x_0,x_1) \cdots m(x_{m-1,}x_m)}{m(x_1) \cdots m(x_{m-1})}. 
\end{equation*}
\end{prop}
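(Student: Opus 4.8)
\emph{Proof proposal.} The plan is to reduce the statement to a single multiplicative splitting identity for $C_\gamma$, combined with the fact that an extremal geodesic through an ordered tuple factors uniquely as a concatenation of independent segments.

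First I would establish the \textbf{splitting lemma}: if $\gamma$ is a geodesic of length $n$ and $z=\gamma(k)$ with $0\le k\le n$, then, writing $\gamma'=\gamma|_{\{0,\ldots,k\}}$ and $\gamma''=\gamma|_{\{k,\ldots,n\}}$,
$$C_\gamma(t)=\frac{C_{\gamma'}(t)\,C_{\gamma''}(t)}{f_t(z)}.$$
This is immediate from the definition of $C_\gamma$: the numerators of $C_{\gamma'}$ and $C_{\gamma''}$ multiply to the numerator of $C_\gamma$, while their denominators produce every intermediate $f_t$ except $f_t(z)$. The degenerate conventions $C_\gamma=f_t(\gamma_0)$ (length $0$) and $C_\gamma=g_t(\gamma_0\gamma_1)$ (length $1$) make the identity hold also when $k\in\{0,n\}$ or when a half degenerates, so no case distinction survives.

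Next I would record the combinatorial decomposition. By Theorem~\ref{prop:GeodG} every oriented path is a geodesic, so cutting an extremal geodesic $\gamma\in E(x_0,\ldots,x_m)$ at the successive vertices $x_0\le\cdots\le x_m$ yields, bijectively, a tuple $(\alpha,\beta_1,\ldots,\beta_m,\omega)$ with $\alpha\in\SEG_{1,x_0}$, $\beta_i\in\Gamma_{x_{i-1},x_i}$ and $\omega\in\SEG_{2,x_m}$; conversely any such tuple glues back to a unique extremal geodesic through the $x_i$. Iterating the splitting lemma at $x_0,\ldots,x_m$ gives
$$C_\gamma(t)=\frac{C_\alpha(t)\,\Big(\prod_{i=1}^m C_{\beta_i}(t)\Big)\,C_\omega(t)}{f_t(x_0)\cdots f_t(x_m)}.$$
Summing over $\gamma\in E(x_0,\ldots,x_m)$, distributing the product over the sums, and abbreviating $A_x(t):=\sum_{\alpha\in\SEG_{1,x}}C_\alpha(t)$, $B_x(t):=\sum_{\omega\in\SEG_{2,x}}C_\omega(t)$ and $M_{x,y}(t):=\sum_{\beta\in\Gamma_{x,y}}C_\beta(t)$, I obtain the master formula
$$m(x_0,\ldots,x_m)=\frac{A_{x_0}(t)\,\Big(\prod_{i=1}^m M_{x_{i-1},x_i}(t)\Big)\,B_{x_m}(t)}{f_t(x_0)\cdots f_t(x_m)}.$$
The cases $m=0$ and $m=1$ specialize to $m(x)=A_x(t)B_x(t)/f_t(x)$ and $m(x,y)=A_x(t)M_{x,y}(t)B_y(t)/(f_t(x)f_t(y))$.

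Finally I would substitute these three expressions into $\prod_{i=1}^m m(x_{i-1},x_i)\big/\prod_{j=1}^{m-1}m(x_j)$. After cancellation against $\prod_j m(x_j)$ the $A$-factors reduce to $A_{x_0}$ and the $B$-factors to $B_{x_m}$, the $f_t$-denominators combine with the $f_t(x_j)$ supplied by the $m(x_j)$ to leave exactly $f_t(x_0)\cdots f_t(x_m)$, and the $M$-factors are untouched; the result is precisely the master formula for $m(x_0,\ldots,x_m)$. The only genuinely delicate points are the bookkeeping for the degenerate lengths in the splitting lemma and the verification that the cut-and-glue map is a bijection, for which I rely on Theorem~\ref{prop:GeodG}. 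The divisions are legitimate since $f_t>0$ and the relevant extremal-geodesic sums are positive on the support, and the cancellation of the $t$-dependent auxiliaries $A,B,M$ is automatic because each $m(\cdots)$ is a sum of the constants $C_\gamma$ and hence independent of $t$.
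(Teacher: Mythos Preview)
Your proof is correct. Both your argument and the paper's rest on the same two ingredients---the multiplicative splitting $C_\gamma(t)=C_{\gamma'}(t)C_{\gamma''}(t)/f_t(z)$ at any intermediate vertex $z$, and the factorisation of an extremal geodesic through a prescribed tuple as a concatenation of independent segments---but they are organised differently. The paper builds a single bijection between $E(x_0,\ldots,x_m)\times\prod_{j=1}^{m-1}E(x_j)$ and $\prod_{i=0}^{m-1}E(x_i,x_{i+1})$ by swapping segments, checks (implicitly, via the splitting identity) that the product of the $C_\gamma$ is preserved, and reads off the identity; its closing phrase about ``cardinality'' is a slight imprecision, since $m(\cdots)$ is a weighted sum rather than a count. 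You instead derive a closed master formula expressing every $m(x_0,\ldots,x_m)$ in terms of the auxiliary $t$-dependent sums $A_x$, $B_x$, $M_{x,y}$, specialise to $m(x)$ and $m(x,y)$, and then cancel algebraically. Your route makes the splitting lemma and the degenerate cases fully explicit and avoids the bookkeeping of the segment-swapping bijection; the paper's route avoids introducing $t$-dependent auxiliaries that are destined to cancel and works entirely with the constants $C_\gamma$ on extremal geodesics. The two reach the same conclusion with comparable effort.
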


\begin{proof} Let $\gamma \in E(x_0,\cdots x_m)$ and $\gamma^{(i)} \in E(x_i)$ for $i=1,\ldots m-1$. To these geodesics we associate the geodesics $\tilde{\gamma}^{(0)} \cdots \tilde{\gamma}^{(m-1)}$ such that, for $i=1,\ldots m-1$, $\tilde{\gamma}^{(i)}$ is constructed by concatenating the begining of $\gamma^{(i)}$, a mid-part of $\gamma$ and the end of $\gamma^{(i+1)}$ in the following way:
\begin{equation}
\tilde{\gamma}^{(i)} : e_0(\gamma^{(i)}) \rightarrow \cdots x_i \rightarrow  \cdots x_{i+1} \rightarrow \cdots e_1(\gamma^{(i+1)}).
\end{equation}
The geodesic $\tilde{\gamma}^{(0)}$ is constructed by concatenating the begining of $\gamma$ and the end of $\gamma^{(1)}$ and $\tilde{\gamma}^{(m-1)}$ is contructed by concatenating the begining of $\gamma^{(m-1)}$ and the end of $\gamma$. It is clear that $\gamma^{(i)} \in E(x_i,x_{i+1})$. Moreover, the application $(\gamma,\gamma^{(1)},\cdots \gamma^{(m-1)}) \mapsto (\tilde{\gamma}^{(0)},\cdots \tilde{\gamma}^{(m-1)})$ is easily proven to be a bijection between $E(x_0,\ldots x_m) \times E(x_1) \times \cdots E(x_{m-1})$ and $E(x_0,x_1) \times \cdots E(x_{m-1},x_m)$. Writing that both sets have same cardinality gives the result. 
\end{proof}

\begin{defi}
The sub-Markov kernels $K$ and $K^*$ associated to a $W_{1,+}$-geodesic $(f_t)$ on $G$ are defined by
\begin{equation} \label{eq:MarkovKdef}
\forall x_1 \in G \ , \forall x_0 \in \mathcal{E}(x_1) \ , \ K(x_1,x_0) := \frac{m(x_1,x_0)}{m(x_1)},
\end{equation}
\begin{equation} \label{eq:MarkovKstardef}
\forall x_0 \in G \ , \forall x_1 \in \mathcal{F}(x_0) \ , \ K^*(x_0,x_1) := \frac{m(x_0,x_1)}{m(x_0)}.
\end{equation}
We also define $Kf(x_1) := \sum_{x_0} K(x_1,x_0) f(x_0)$ and $K^*f(x_0) := \sum_{x_1} K^*(x_0,x_1) f(x_1)$.
\end{defi}

\begin{prop} \label{prop:KKstarProp}
The kernels $K$ and $K^*$ satisfy the following:
\begin{itemize}
\item If $x_1 \notin \mathcal{A}$ then $\sum_{x_0 \in \mathcal{E}(x_1)} K(x_1,x_0) =1$.
\item If $x_0 \notin \mathcal{B}$ then $\sum_{x_1 \in \mathcal{F}(x_0)} K^*(x_0,x_1) = 1$.
\item The operators $K$ and $K^*$ are adjoint for the scalar product $<f,g> := \sum_{x \in G} f(x) g(x) m(x)$.
\item The iterated kernel $K^n$ is supported on the set of couples $(x_n,x_{0})$ such that $x_0 \in \mathcal{E}^n(x_n)$, i.e. such that $x_0 \leq x_n$ and $d(x_n,x_0)=n$. For such a couple we have $$K^n(x_n,x_0) = \frac{m(x_n,x_0)}{m(x_n)}.$$
\item Similarly, for $x_n \in \mathcal{F}^n(x_0)$, i.e. for $x_{n} \leq x_0$ such that $d(x_{n},x_0)=n$ we have $$(K^*)^n(x_0,x_{n}) = \frac{m(x_0,x_n)}{m(x_0)}.$$
\item The operators $K$ and $K^*$ are nilpotent.
\end{itemize}
\end{prop}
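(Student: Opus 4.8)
The plan is to establish the six bullet points essentially in the order listed, deriving each from the definitions of $m(\cdot)$ and $K, K^*$ together with the key multiplicative identity in Proposition~\ref{prop:Exbiject}. First I would prove the two \emph{stochasticity} statements. For the claim that $\sum_{x_0 \in \mathcal{E}(x_1)} K(x_1,x_0) = 1$ when $x_1 \notin \mathcal{A}$, I would unfold the definition to reduce it to showing $\sum_{x_0 \in \mathcal{E}(x_1)} m(x_1,x_0) = m(x_1)$. This should follow by a bijective/partition argument: every extremal geodesic $\gamma$ passing through $x_1$ (counted by $m(x_1)$) enters $x_1$ from a unique immediate predecessor $x_0 \in \mathcal{E}(x_1)$, and since $x_1 \notin \mathcal{A}$ such a predecessor always exists along $\gamma$; grouping the geodesics in $E(x_1)$ according to this predecessor partitions them into the sets $E(x_1,x_0)$, and the weights $C_\gamma$ are additive over this partition. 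The statement for $K^*$ is symmetric, grouping instead by the immediate successor.

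Next I would prove the \emph{adjointness}. Writing out $\langle Kf, g\rangle$ and $\langle f, K^*g\rangle$ with the weighted inner product, the claim reduces to the symmetry relation $m(x_1,x_0) = m(x_0,x_1)$ (the unordered pair weight is independent of which vertex is listed first), together with matching the weight $m(x_1)$ appearing in $K(x_1,x_0)=m(x_1,x_0)/m(x_1)$ against the $m(x_0)$ weight in the inner product on the $K^*$ side. Concretely, $\langle Kf,g\rangle = \sum_{x_1}\sum_{x_0\in\mathcal{E}(x_1)} \frac{m(x_1,x_0)}{m(x_1)} f(x_0) g(x_1) m(x_1)$; the $m(x_1)$ cancels, leaving $\sum_{x_0\leq x_1} m(x_0,x_1) f(x_0) g(x_1)$, and the same double sum emerges symmetrically from $\langle f, K^*g\rangle$ after cancelling $m(x_0)$, so the two agree once one notes $m(x_1,x_0)=m(x_0,x_1)$.

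The \emph{iterated kernel} formulas are the technical heart and where I expect the main obstacle. I would prove $K^n(x_n,x_0)=m(x_n,x_0)/m(x_n)$ by induction on $n$, the base case $n=1$ being the definition. For the inductive step I would write $K^{n+1}(x_{n+1},x_0) = \sum_{x_n \in \mathcal{E}(x_{n+1})} K(x_{n+1},x_n) K^n(x_n,x_0)$, substitute the inductive formula, and then invoke Proposition~\ref{prop:Exbiject} in the form $m(x_{n+1},x_n,x_0) = m(x_{n+1},x_n)m(x_n,x_0)/m(x_n)$ to collapse the product, followed by summing over the intermediate vertex $x_n$ using the same grouping argument as in the stochasticity step to obtain $\sum_{x_n} m(x_{n+1},x_n,x_0) = m(x_{n+1},x_0)$. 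The delicate point is ensuring the support claim: that $K^n$ vanishes unless $x_0 \in \mathcal{E}^n(x_{n})$, i.e. unless $d(x_n,x_0)=n$ with $x_0\leq x_n$; this is where the fact (from Theorem~\ref{prop:GeodG}) that oriented paths are geodesics is used, since each application of $K$ moves one step backward along an oriented edge and $n$ such steps must form a length-$n$ geodesic rather than a shorter path. The $(K^*)^n$ formula is handled symmetrically.

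Finally, \emph{nilpotency} follows immediately from the support description together with the finiteness observations already recorded in the Remark following the definition of extremal geodesics: the oriented graph has finitely many oriented edges and no oriented cycles (a cycle would be a non-trivial oriented path from a vertex to itself, contradicting that oriented paths are geodesics), so there is a uniform bound $N := \sup_{\gamma \in \EG} L(\gamma)$ on the length of oriented geodesics. Since $K^n$ is supported on pairs at oriented distance exactly $n$, we have $K^n = 0$ for all $n > N$, and likewise $(K^*)^n = 0$, which establishes nilpotency of both operators.
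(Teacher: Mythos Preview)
Your proposal is correct and follows essentially the same route as the paper: the stochasticity via the partition of $E(x_1)$ by immediate predecessor, the adjointness by cancelling the $m(x_i)$ weights to reach the common sum $\sum_{x_0\rightarrow x_1} m(x_0,x_1)f(x_0)g(x_1)$, the iterated-kernel formula via Proposition~\ref{prop:Exbiject} (the paper computes it directly rather than by induction, but the content is identical), and nilpotency from the finite length of oriented geodesics. The only cosmetic difference is that the paper invokes $\diam(G)$ for the nilpotency bound rather than $\sup_{\gamma\in\EG}L(\gamma)$, and does not separately flag the symmetry $m(x_1,x_0)=m(x_0,x_1)$ since both denote the same sum over $E(x_0,x_1)$.
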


\begin{proof} The first point comes from the fact that, if $x_0 \notin \mathcal{B}$, there exists a bijection between the set $E(x_0)$ and the disjoint union $\bigcup_{x_1 \in \mathcal{F}(x_0)} E(x_0,x_1)$. The second point is proven similarly. The third point is proven by noticing that both scalar products $<Kf,g>$ and $<f,K^*g>$ are equal to $$\sum_{x_0 \rightarrow x_1} m(x_0,x_1) f(x_0) g(x_1).$$ To prove the fourth point, we write the general formula for the iterated kernel for some $n\geq 2$: $$\forall x_0, x_n \in G \ , \ K^n(x_n,x_0) := \sum_{x_{n-1},\ldots x_{1}} K(x_n,x_{n-1})\cdots K(x_{1},x_0).$$ The product $K(x_n,x_{n-1})\cdots K(x_1,x_0)$ is non-zero if and only if $x_0 \rightarrow \cdots \rightarrow x_n$, i.e. if $(x_0,\ldots,x_n)$ is a geodesic. This proves that $K^n(x_n,x_0) > 0$ implies that $x_0 \in \mathcal{E}^n(x_n)$. Moreover we have:
\begin{equation*}
K^n(x_n,x_0) = \sum_{\gamma \in \Gamma_{x_0,x_n}} \frac{m(x_n,\gamma_{n-1})}{m(x_n)} \cdots \frac{m(\gamma_1,x_0)}{m(\gamma_1)} 
= \frac{m(x_n,x_0)}{m(x_n)}
\end{equation*}
by Proposition~\ref{prop:Exbiject}. The fifth point is proven similarly. The nilpotency of $K$ and $K^*$ comes from the fact that $(G,\rightarrow)$ has a finite diameter: if $n > \diam(G)$ then $K^n=0$ and $(K^{*})^n=0$. 
\end{proof}

\medskip

\begin{remark} The first point of Proposition~\ref{prop:KKstarProp} shows that $K$ can easily be transformed into a Markov kernel: it suffices to add a vertex $\omega$ (often called ``cemetery'') to $G$ and oriented edges $\omega \rightarrow x$ for every $x \in \mathcal{A}$. The sub-Markov kernel $K$ is extended into a Markov kernel on $G\cup \omega$ by defining $K(\omega,\omega)=1$ and $K(\omega,x)=1$ for every $x \in \mathcal{A}$. The kernel $K^*$ can be treated similarly, by considering the oriented edges $(x,\omega)$ for $x \in \mathcal{B}.$
\end{remark}

\subsection{Polynomial structure of $W_{1,+}$-geodesics}

In this paragraph we use properties of the functions $C_\gamma(t)$ and of the sub-Markovian kernels $K,K^\star$ to give expression of $(f_t)$ as a mixture of binomial distributions on geodesics of $G$. 

\medskip

A direct calculation proves the following fundamental result:

\begin{prop}\label{prop:Ctft}
Let $x\in G$ be a vertex and $\gamma,\tilde{\gamma}$ be two geodesics on $G$ with $\gamma \in \SEG_{1,x}$ and $\tilde{\gamma} \in \SEG_{2,x}$. Then \begin{equation} f_t(x) = \frac{C_{\gamma}(t) C_{\tilde{\gamma}}(t)}{C_{\gamma\cup \tilde{\gamma}}}, \end{equation} where $\gamma \cup \tilde{\gamma}$ is the concatenation of $\gamma$ and $\tilde{\gamma}$.
\end{prop}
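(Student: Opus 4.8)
The plan is to prove the identity $f_t(x) = C_\gamma(t) C_{\tilde\gamma}(t) / C_{\gamma \cup \tilde\gamma}$ by a direct substitution of the definition of $C_\gamma$ and cancellation of telescoping products. Write $\gamma : a = \gamma_0 \to \cdots \to \gamma_p = x$ in $\SEG_{1,x}$ and $\tilde\gamma : x = \tilde\gamma_0 \to \cdots \to \tilde\gamma_q = b$ in $\SEG_{2,x}$, so that the concatenation $\gamma \cup \tilde\gamma$ is the geodesic $a \to \cdots \to x \to \cdots \to b$ of length $p+q$ passing through the vertex $x$ at position $p$. The key observation is that all three of these functions $C_\gamma(t)$, $C_{\tilde\gamma}(t)$, and $C_{\gamma \cup \tilde\gamma}(t)$ are built out of the \emph{same} edge-factors $g_t(\gamma_{i}\gamma_{i+1})$ and vertex-factors $f_t(\gamma_i)$, so the ratio should collapse to the single factor $f_t(x)$.

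First I would expand each $C$ using its definition. Assuming $p \geq 1$ and $q \geq 1$ so that the generic formula applies, I have
\begin{equation*}
C_\gamma(t) = \frac{g_t(\gamma_0\gamma_1) \cdots g_t(\gamma_{p-1}\gamma_p)}{f_t(\gamma_1) \cdots f_t(\gamma_{p-1})}, \qquad
C_{\tilde\gamma}(t) = \frac{g_t(\tilde\gamma_0\tilde\gamma_1) \cdots g_t(\tilde\gamma_{q-1}\tilde\gamma_q)}{f_t(\tilde\gamma_1) \cdots f_t(\tilde\gamma_{q-1})},
\end{equation*}
while the concatenation has $p+q$ edges and $p+q-1$ interior vertices, namely $\gamma_1,\ldots,\gamma_{p-1}, x, \tilde\gamma_1, \ldots, \tilde\gamma_{q-1}$, so that
\begin{equation*}
C_{\gamma\cup\tilde\gamma}(t) = \frac{g_t(\gamma_0\gamma_1)\cdots g_t(\gamma_{p-1}x)\, g_t(x\tilde\gamma_1)\cdots g_t(\tilde\gamma_{q-1}\tilde\gamma_q)}{f_t(\gamma_1)\cdots f_t(\gamma_{p-1})\, f_t(x)\, f_t(\tilde\gamma_1)\cdots f_t(\tilde\gamma_{q-1})}.
\end{equation*}
Then I would simply form the ratio $C_\gamma(t) C_{\tilde\gamma}(t) / C_{\gamma\cup\tilde\gamma}(t)$: the product $C_\gamma C_{\tilde\gamma}$ contains exactly the same $p+q$ edge-factors in the numerator and the same $p+q-2$ interior-vertex factors (all but $f_t(x)$) in the denominator as $C_{\gamma\cup\tilde\gamma}$. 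Dividing, every $g_t$-factor cancels, every $f_t$-factor at the interior vertices $\gamma_i$ and $\tilde\gamma_j$ cancels, and the denominator of $C_{\gamma\cup\tilde\gamma}$ contributes the leftover $f_t(x)$ to the numerator, yielding exactly $f_t(x)$.

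The only real subtlety — which is why the statement uses the three-case definition of $C_\gamma$ — is the handling of the degenerate lengths. When $p = 0$ (so $\gamma$ is the trivial geodesic at $x$ and $C_\gamma(t) = f_t(x)$) or $q = 0$, or when $p = 1$ or $q = 1$ (where $C_\gamma(t) = g_t(\gamma_0\gamma_1)$ with no denominator), the generic formula no longer applies verbatim and the bookkeeping of which vertex-factors appear must be checked separately. I would verify these boundary cases by hand: for instance if $p=0$ then $\gamma \cup \tilde\gamma = \tilde\gamma$ and the claim reads $f_t(x) \cdot C_{\tilde\gamma}(t) / C_{\tilde\gamma}(t) = f_t(x)$, which is immediate; the case $p = q = 0$ forces $\gamma = \tilde\gamma$ to be the point $x$ and $\gamma\cup\tilde\gamma$ likewise, giving $f_t(x) f_t(x)/f_t(x) = f_t(x)$. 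Thus the cancellation is robust across all cases, and no step requires more than the definitions plus the telescoping already implicit in $C_\gamma$. I do not anticipate a genuine obstacle here — the result is, as the text says, a direct calculation — the main care is only in aligning the vertex- and edge-indices correctly so that the single surviving factor is $f_t(x)$ and not, say, $f_t(x)^{-1}$ or a spurious $g_t$-term.
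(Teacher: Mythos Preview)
Your proposal is correct and is exactly the ``direct calculation'' the paper alludes to without writing out: expand the three $C$-quantities from their definitions, observe that the edge-factors $g_t$ in $C_\gamma(t)C_{\tilde\gamma}(t)$ coincide with those in $C_{\gamma\cup\tilde\gamma}$, and that the interior vertex-factors match except for the extra $f_t(x)$ in the denominator of $C_{\gamma\cup\tilde\gamma}$, which survives in the numerator of the ratio. Your treatment of the degenerate cases $p=0,1$ and $q=0,1$ is also correct and complete; there is nothing to add.
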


\begin{remark} A first consequence of Propositions~\ref{prop:CgammaConst} and~\ref{prop:Ctft} is the fact that, for any $x \in G$, $f_t(x)$ is a polynomial function of $t$ such that $\degre(f_t(x)) \leq \diam(G)$. \end{remark}

We also use Proposition~\ref{prop:Ctft} to show the following:

\begin{prop}\label{prop:Cgamma1frac}
For $x \in G$, we consider two semi-extremal curves $\gamma^{(1)},\gamma^{(2)} \in \SEG_{1,x}$. The quotient $\frac{C_{\gamma^{(1)}}(t)}{C_{\gamma^{(2)}}(t)}$ does not depend on $t$ and is equal to $\frac{m(\gamma^{(1)})}{m(\gamma^{(2)})}$. Furthemore, we have
\begin{equation}
C_{\gamma^{(1)}}(t) = \frac{m(\gamma^{(1)})}{m(x)} \sum_{\gamma \in \SEG_{1,x}(G)} C_\gamma(t).
\end{equation}
\end{prop}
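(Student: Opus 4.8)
The plan is to derive the whole statement from Proposition~\ref{prop:Ctft}, which already writes $f_t(x)$ in terms of one geodesic from each family, together with the combinatorics of extremal geodesics packaged in Definition~\ref{def:mfromC}. First I would fix any auxiliary geodesic $\tilde{\gamma} \in \SEG_{2,x}$. Proposition~\ref{prop:Ctft} then gives, for $i=1,2$,
\begin{equation*}
f_t(x) = \frac{C_{\gamma^{(i)}}(t)\, C_{\tilde{\gamma}}(t)}{C_{\gamma^{(i)} \cup \tilde{\gamma}}},
\end{equation*}
where each concatenation $\gamma^{(i)} \cup \tilde{\gamma}$ lies in $\EG$, so its $C$-value is a $t$-independent constant by Proposition~\ref{prop:CgammaConst}. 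Equating the two expressions for $f_t(x)$ and cancelling the common positive factor $C_{\tilde{\gamma}}(t)$ yields
\begin{equation*}
\frac{C_{\gamma^{(1)}}(t)}{C_{\gamma^{(2)}}(t)} = \frac{C_{\gamma^{(1)} \cup \tilde{\gamma}}}{C_{\gamma^{(2)} \cup \tilde{\gamma}}},
\end{equation*}
which is a ratio of constants; this proves that the quotient does not depend on $t$. Denote its value by $\lambda$.

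Next I would identify $\lambda$ with $m(\gamma^{(1)})/m(\gamma^{(2)})$. The key combinatorial fact is that, for each $i$, the map $\tilde{\gamma} \mapsto \gamma^{(i)} \cup \tilde{\gamma}$ is a bijection from $\SEG_{2,x}$ onto the set $E(\gamma^{(i)})$ of extremal geodesics passing through all vertices of $\gamma^{(i)}$. Indeed, since $e_0(\gamma^{(i)}) \in \mathcal{A}$ has no predecessor and every oriented path is a geodesic by Theorem~\ref{prop:GeodG}, any extremal geodesic meeting all vertices of $\gamma^{(i)}$ must start at $e_0(\gamma^{(i)})$ and coincide with $\gamma^{(i)}$ all the way up to $x$, hence decomposes uniquely as $\gamma^{(i)} \cup \tilde{\gamma}$. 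Consequently $m(\gamma^{(i)}) = \sum_{\tilde{\gamma} \in \SEG_{2,x}} C_{\gamma^{(i)} \cup \tilde{\gamma}}$, and since the ratio $C_{\gamma^{(1)} \cup \tilde{\gamma}}/C_{\gamma^{(2)} \cup \tilde{\gamma}}$ equals the constant $\lambda$ for every $\tilde{\gamma}$, summing term by term gives $m(\gamma^{(1)}) = \lambda\, m(\gamma^{(2)})$, i.e. $\lambda = m(\gamma^{(1)})/m(\gamma^{(2)})$.

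For the final identity I would use the ratio formula just established, rewritten as $C_\gamma(t) = C_{\gamma^{(1)}}(t)\, m(\gamma)/m(\gamma^{(1)})$ for every $\gamma \in \SEG_{1,x}$, and sum over $\SEG_{1,x}$. It then suffices to show $\sum_{\gamma \in \SEG_{1,x}} m(\gamma) = m(x)$. This holds because each extremal geodesic through $x$ has a unique initial segment lying in $\SEG_{1,x}$ (once more by Theorem~\ref{prop:GeodG} and the fact that $\mathcal{A}$-vertices have no predecessors), so the sets $E(\gamma)$ for $\gamma \in \SEG_{1,x}$ partition the set $E(x)$ of extremal geodesics through $x$; summing $C_\sigma$ over this partition gives exactly $m(x)$. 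Substituting and rearranging produces the stated formula $C_{\gamma^{(1)}}(t) = \frac{m(\gamma^{(1)})}{m(x)} \sum_{\gamma \in \SEG_{1,x}} C_\gamma(t)$.

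The main obstacle is not the algebra but the two combinatorial reconstruction claims used in the second and third paragraphs: that an extremal geodesic is uniquely determined by, and splits along, its initial segment up to $x$. This is precisely where the geometry of the $W_1$-orientation enters, through Theorem~\ref{prop:GeodG} (oriented paths are geodesics) and the defining property of $\mathcal{A}$; everything else reduces to cancellation and finite summation.
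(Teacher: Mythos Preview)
Your argument is correct and follows essentially the same route as the paper: apply Proposition~\ref{prop:Ctft} with an auxiliary $\tilde{\gamma}\in\SEG_{2,x}$ to obtain a $t$-independent ratio, then sum over $\tilde{\gamma}$ to identify it with $m(\gamma^{(1)})/m(\gamma^{(2)})$, and finally sum the resulting proportionality over $\SEG_{1,x}$ using $\sum_{\gamma\in\SEG_{1,x}} m(\gamma)=m(x)$. The only difference is that you spell out explicitly the bijection $\SEG_{2,x}\to E(\gamma^{(i)})$ and the partition of $E(x)$ by initial segments, whereas the paper leaves these implicit.
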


\begin{proof} Let $\tilde{\gamma}$ be in $\SEG_{2,x}(G)$. Then Proposition~\eqref{prop:Ctft} shows that $$\frac{C_{\gamma^{(1)}}(t)}{C_{\gamma^{(2)}}(t)} = \frac{C(\gamma^{(1)} \cup \tilde{\gamma})}{C(\gamma^{(2)} \cup \tilde{\gamma})}.$$ We use the fact that this quotient does not depend on $\tilde{\gamma}$ to write $$\frac{C_{\gamma^{(1)}}(t)}{C_{\gamma^{(2)}}(t)} = \frac{\sum_{\tilde{\gamma} \in \SEG_{2,x}(G) }C(\gamma^{(1)} \cup \tilde{\gamma})}{\sum_{\tilde{\gamma} \in \SEG_{2,x}(G) }C(\gamma^{(2)} \cup \tilde{\gamma})} = \frac{m(\gamma^{(1)})}{m(\gamma^{(2)})}.$$
The second point is proven by writing
\begin{equation}
\sum_{\gamma \in SE\Gamma_{1,x}(G)} \frac{C_\gamma(t)}{C_{\gamma^{(1)}}(t)} = \sum_{\gamma \in \SEG_{1,x}(G)} \frac{m(\gamma)}{m(\gamma^{(1)})} = \frac{m(x)}{m(\gamma^{(1)})}.
\end{equation}
\end{proof}

We now introduce two families of functions which play the same role as in the case of contraction of measures:

\begin{defi}
We define the functions $P_t(x)$ and $Q_t(x)$ by
\begin{equation}
P_t(x) := \frac{1}{m(x)} \sum_{\gamma^{(2)} \in \SEG_{2,x}} C_t(\gamma^{(2)}) \ , \ Q_t(x) := \frac{1}{m(x)} \sum_{\gamma^{(1)} \in \SEG_{1,x}} C_t(\gamma^{(1)}).
\end{equation}
\end{defi}

\begin{prop}
The functions $f_t$, $g_t$ and $h_t$ are related to $P_t$, $Q_t$ and $m$ by
\begin{enumerate}
\item $f_t(x_0) = m(x_0) P_t(x_0) Q_t(x_0)$,
\item $g_t(x_0x_1) = m(x_0x_1) P_t(x_0)Q_t(x_1)$,
\item $h_t(x_0x_1x_2)= m(x_0x_1x_2) P_t(x_0)Q_t(x_2)$.
\end{enumerate}
\end{prop}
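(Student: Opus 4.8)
The plan is to establish the three product formulas by systematically unwinding the definitions of $P_t$, $Q_t$, and $m$, reducing everything to the structural identity of Proposition~\ref{prop:Ctft}, which expresses $f_t(x)$ as a ratio of semi-extremal $C$-functions against a constant extremal one. The central fact I would exploit is Proposition~\ref{prop:Cgamma1frac}: for any fixed $\gamma^{(1)} \in \SEG_{1,x}$ one has $Q_t(x) = \frac{1}{m(x)}\sum_{\gamma} C_\gamma(t) = \frac{C_{\gamma^{(1)}}(t)}{m(\gamma^{(1)})}$, and by the symmetric argument (interchanging the roles of the initial and final extremal directions) $P_t(x) = \frac{C_{\tilde\gamma}(t)}{m(\tilde\gamma)}$ for any fixed $\tilde\gamma \in \SEG_{2,x}$. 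Thus $P_t$ and $Q_t$ are essentially single $C$-functions normalized by their combinatorial weights, which is what makes the products collapse.

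\textbf{First}, for point (1), I would fix a vertex $x_0$ and choose any $\gamma^{(1)} \in \SEG_{1,x_0}$ and $\tilde\gamma \in \SEG_{2,x_0}$. Then $m(x_0)P_t(x_0)Q_t(x_0) = m(x_0)\cdot \frac{C_{\tilde\gamma}(t)}{m(\tilde\gamma)}\cdot\frac{C_{\gamma^{(1)}}(t)}{m(\gamma^{(1)})}$. Since $\gamma^{(1)}\cup\tilde\gamma$ is an extremal geodesic through $x_0$, Proposition~\ref{prop:Exbiject} (applied to the triple $e_0(\gamma^{(1)}) \leq x_0 \leq e_1(\tilde\gamma)$) gives $m(\gamma^{(1)})m(\tilde\gamma) = m(x_0)\,m(\gamma^{(1)}\cup\tilde\gamma)$, while by definition $m(\gamma^{(1)}\cup\tilde\gamma)=C_{\gamma^{(1)}\cup\tilde\gamma}$ is the constant extremal value. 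Substituting and invoking Proposition~\ref{prop:Ctft}, which states precisely $f_t(x_0)=C_{\gamma^{(1)}}(t)C_{\tilde\gamma}(t)/C_{\gamma^{(1)}\cup\tilde\gamma}$, yields the claim after the $m(x_0)$ factors cancel.

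\textbf{Next}, for points (2) and (3), I would run the same scheme on edges and triples. For an oriented edge $(x_0x_1)$, I would pick $\tilde\gamma\in\SEG_{2,x_0}$ ending its descending part at $x_0$ and $\gamma^{(1)}\in\SEG_{1,x_1}$, and form the extremal geodesic $\tilde\gamma\cup(x_0x_1)\cup\gamma^{(1)}$ passing through the edge. The definition $g_t(x_0x_1)=P_t(x_0)Q_t(x_1)\cdot m(x_0x_1)$ should again follow by writing $g_t(x_0x_1)=C_{\tilde\gamma\cup(x_0x_1)\cup\gamma^{(1)}}(t)$-type identities and applying Proposition~\ref{prop:Exbiject} now to the four-term chain $e_0 \leq x_0 \leq x_1 \leq e_1$ to produce the factor $m(x_0x_1)=m(x_0,x_1)$. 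Point (3) is identical with a triple $(x_0x_1x_2)$ inserted, using the Benamou-Brenier relation~\eqref{eq:BBgraph} $h_t(x_0x_1x_2)f_t(x_1)=g_t(x_0x_1)g_t(x_1x_2)$ to cross-check consistency, and the definition of $m(x_0x_1x_2)$ via $E(x_0,x_1,x_2)$.

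\textbf{The main obstacle} I anticipate is bookkeeping rather than conceptual: carefully verifying that the $C$-function of a concatenated geodesic factors as $C_{\tilde\gamma}(t)\cdot(\text{edge factors})\cdot C_{\gamma^{(1)}}(t)$ divided by the appropriate $f_t$ values at the interior junction vertices, so that the $m$-weights supplied by Proposition~\ref{prop:Exbiject} line up exactly with the velocity-field factors in the definition of $C_\gamma$. In other words, the delicate point is matching the telescoping of $f_t$-denominators in $C_{\gamma\cup\tilde\gamma}$ against the products $m(x_0,x_1)\cdots$ coming from the bijection lemma. Once the edge case is handled, the triple case is a purely formal extension, and no genuinely new idea is needed beyond Propositions~\ref{prop:Exbiject}, \ref{prop:Ctft}, and~\ref{prop:Cgamma1frac}.
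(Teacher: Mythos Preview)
Your approach is correct and rests on the same core identity (Proposition~\ref{prop:Ctft}) as the paper, but the organization is genuinely different. The paper proves point (1) by summing $C_{\gamma^{(1)}}(t)C_{\gamma^{(2)}}(t)$ over \emph{all} pairs $(\gamma^{(1)},\gamma^{(2)})\in \SEG_{1,x_0}\times\SEG_{2,x_0}$, invoking the concatenation bijection with $E(x_0)$ to obtain $m(x_0)^2 P_t(x_0)Q_t(x_0)=m(x_0)f_t(x_0)$ directly from the definitions of $P_t,Q_t$ as sums. For point (2) it does the analogous double sum over $\SEG_{1,x_0}\times\SEG_{2,x_1}$, expanding the $C$-functions explicitly to exhibit the factor $f_t(x_0)f_t(x_1)/g_t(x_0x_1)$. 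You instead use Proposition~\ref{prop:Cgamma1frac} (and its $\SEG_2$-analogue, which you correctly note must be invoked by symmetry) to collapse $P_t$ and $Q_t$ to a \emph{single} normalized $C$-value each, then apply Proposition~\ref{prop:Exbiject} once. Your route avoids summation entirely at the cost of front-loading Proposition~\ref{prop:Cgamma1frac}; the paper's route is more self-contained from the raw definitions. Both are short.

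One notational slip to fix: in your sketch of point (2), the concatenation ``$\tilde\gamma\cup(x_0x_1)\cup\gamma^{(1)}$'' is written in the wrong order, since $\tilde\gamma\in\SEG_{2,x_0}$ starts at $x_0$ heading toward $\mathcal{B}$ while $\gamma^{(1)}\in\SEG_{1,x_1}$ ends at $x_1$ coming from $\mathcal{A}$. What you actually need is to choose $\gamma^{(1)}\in\SEG_{1,x_1}$ passing through $x_0$ (equivalently $\gamma^{(1)}=\gamma'\cup(x_0x_1)$ with $\gamma'\in\SEG_{1,x_0}$) and $\tilde\gamma\in\SEG_{2,x_0}$ passing through $x_1$; then both contain the edge $(x_0x_1)$ and the computation $C_{\gamma^{(1)}}(t)C_{\tilde\gamma}(t)=C_{\gamma}\cdot g_t(x_0x_1)$, $m(\gamma^{(1)})m(\tilde\gamma)=m(\gamma)\cdot m(x_0,x_1)$ goes through exactly as you anticipate. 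For point (3) both you and the paper use the Benamou--Brenier relation~\eqref{eq:BBgraph} together with Proposition~\ref{prop:Exbiject}.
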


\begin{proof} To prove the first point, we notice that the concatenation map $\gamma^{(1)}, \gamma^{(2)} \mapsto \gamma^{(1)} \cup \gamma^{(2)}$ is a bijection between the sets $\SEG_{1,x_0} \times \SEG_{2,x_0}$ and $E(x_0)$. We then use Proposition~\ref{prop:Ctft} to write:
\begin{eqnarray*}
\sum_{(\gamma^{(1)},\gamma^{(2)}) \in \SEG_{1,x_0} \times \SEG_{2,x_0}} C_{\gamma^{(1)}}(t) C_{\gamma^{(2)}}(t) &=& \sum_{\gamma \in E(x_0)} C_\gamma f_t(x_0) \\
&=& f_t(x_0) m(x_0).
\end{eqnarray*}
To prove the second point, given of vertices $x_0 \rightarrow x_1$ we consider the bijection between the sets $\SEG_{1,x_0} \times \SEG_{2,x_1}$ and $E(x_0,x_1)$ given by the concatenation $\gamma^{(1)},\gamma^{(2)} \rightarrow \gamma^{(1)} \cup \gamma^{(2)}$. Moreover, if $\gamma^{(1)} \in \SEG_{1,x_0}$ and $\gamma^{(2)} \in \SEG_{2,x_1}$ have length $L_1 \geq 2$ and $L_2 \geq 2$ we have:
\begin{eqnarray*}
C_{\gamma^{(1)}}(t) C_{\gamma^{(2)}}(t) &=& \frac{g(\gamma^{(1)}_0 \gamma^{(1)}_1) \cdots g(\gamma^{(1)}_{L_1-1} x_0) }{f(\gamma^{(1)}_1) \cdots f(\gamma^{(1)}_{L_1-1})} \frac{g(x_1 \gamma^{(2)}_1) \cdots g(\gamma^{(2)}_{L_2-1} \gamma^{(2)}_{L_2}) }{f(\gamma^{(2)}_1) \cdots f(\gamma^{(2)}_{L_2-1})} \\
&=& C_{\gamma^{(1)} \cup \gamma^{(2)}}(t) \frac{f_t(x_0) f_t(x_1)}{g_t(x_0x_1)}.
\end{eqnarray*}
Summing over all $\gamma^{(1)},\gamma^{(2)}$ gives $$\frac{1}{m(x_0)} P_t(x_1) \frac{1}{m(x_1)} Q_t(x_0) = m(x_0,x_1) \frac{f_t(x_0) f_t(x_1)}{g_t(x_0x_1)}.$$ Replacing $f_t(x_0)$ and $f_t(x_1)$ by their expressions in terms of $P_t,Q_t$ proves the second point. The third point is simply proven by using the Benamou-Brenier equation:
\begin{eqnarray*}
h_t(x_0x_1x_2) &=& \frac{g_t(x_0x_1)g_t(x_1x_2)}{f_t(x_1)} = \frac{m(x_0,x_1)m(x_1,x_2)}{m(x_1)}P_t(x_0)Q_t(x_2) \\&=& m(x_0,x_1,x_2) P_t(x_0) Q_t(x_2).
\end{eqnarray*}
\end{proof}

\begin{prop} \label{prop:PQPDE}
The functions $P_t$ and $Q_t$ satisfy the differential equations
\begin{equation}
 \frac{\partial}{\partial t}P_t(x) =  KP_t(x)\ , \ \frac{\partial}{\partial t}Q_t(x) = -K^*Q_t(x).
\end{equation}
\end{prop}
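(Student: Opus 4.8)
The plan is to differentiate $P_t(x)$ directly, term by term, and to recognise the outcome as $KP_t(x)$. Writing $P_t(x) = \frac{1}{m(x)}\sum_{\gamma \in \SEG_{2,x}} C_\gamma(t)$ and recalling that $m(x)$ is a genuine constant in $t$ (the numbers $C_\delta$ attached to extremal geodesics are constant by Proposition~\ref{prop:CgammaConst}), I would apply Proposition~\ref{prop:Cgammadiff} to each $C_\gamma(t)$ with $\gamma \in \SEG_{2,x}$. Since such a $\gamma$ ends at $e_1(\gamma) \in \mathcal{B}$, the set $\mathcal{F}(e_1(\gamma))$ is empty and the ``forward'' sum in~\eqref{eq:Cgammadiff} disappears; only the ``backward'' term survives, giving $\frac{\partial}{\partial t}C_\gamma(t) = \sum_{x_0 \in \mathcal{E}(x)} C_{x_0 \cup \gamma}(t)$. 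Summing over $\gamma \in \SEG_{2,x}$ yields
$$\frac{\partial}{\partial t}P_t(x) = \frac{1}{m(x)} \sum_{x_0 \in \mathcal{E}(x)} \sum_{\gamma \in \SEG_{2,x}} C_{x_0 \cup \gamma}(t).$$

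The next step is to express the inner double sum through $P_t(x_0)$. For each fixed $x_0 \in \mathcal{E}(x)$ the map $\gamma \mapsto x_0 \cup \gamma$ is a bijection from $\SEG_{2,x}$ onto the subset of $\SEG_{2,x_0}$ whose first step is $x_0 \to x$, so each $x_0 \cup \gamma$ lies in $\SEG_{2,x_0}$. I would use the analogue of Proposition~\ref{prop:Cgamma1frac} for semi-extremal geodesics of the second type, which holds by the same argument with the roles of $\SEG_1$ and $\SEG_2$ exchanged: every $\tilde\gamma \in \SEG_{2,x_0}$ satisfies $C_{\tilde\gamma}(t) = m(\tilde\gamma)\,P_t(x_0)$, since $C_{\tilde\gamma}(t) = \frac{m(\tilde\gamma)}{m(x_0)}\sum_{\gamma' \in \SEG_{2,x_0}} C_{\gamma'}(t)$ and the last sum is $m(x_0)P_t(x_0)$ by definition of $P_t$. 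Hence the inner sum equals $P_t(x_0)\sum_{\gamma \in \SEG_{2,x}} m(x_0 \cup \gamma)$, and only a purely combinatorial quantity remains.

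The key identity is $\sum_{\gamma \in \SEG_{2,x}} m(x_0 \cup \gamma) = m(x_0,x)$. I would read it off the definition of $m$: the extremal geodesics counted by $m(x_0,x)$ pass through the consecutive pair $x_0 \to x$ (consecutive because $d(x_0,x)=1$), hence split uniquely as $\gamma^{(1)} \cup (x_0 \cup \gamma)$ with $\gamma^{(1)} \in \SEG_{1,x_0}$ and $\gamma \in \SEG_{2,x}$; summing the constants $C_\delta$ over the first factor gives exactly $m(x_0 \cup \gamma)$, via the bijection $E(\tilde\gamma) \cong \SEG_{1,x_0}$ valid for any $\tilde\gamma \in \SEG_{2,x_0}$. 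Substituting back produces
$$\frac{\partial}{\partial t}P_t(x) = \frac{1}{m(x)}\sum_{x_0 \in \mathcal{E}(x)} m(x_0,x)\,P_t(x_0) = \sum_{x_0 \in \mathcal{E}(x)} K(x,x_0)\,P_t(x_0) = KP_t(x),$$
which is the first equation. The identity $\frac{\partial}{\partial t}Q_t(x) = -K^*Q_t(x)$ then follows by the mirror-image computation: for $\gamma \in \SEG_{1,x}$ the initial vertex lies in $\mathcal{A}$, so it is the backward term of~\eqref{eq:Cgammadiff} that drops and the surviving forward term, which carries a minus sign, produces $-K^*$ after the same reorganisation.

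I expect the main obstacle to be bookkeeping rather than a real difficulty: one must set up the two bijections between sets of (semi-)extremal geodesics carefully and verify that the proportionality statement of Proposition~\ref{prop:Cgamma1frac} does transfer to $\SEG_2$ without change. Once those combinatorial points are pinned down, the result is an immediate consequence of Proposition~\ref{prop:Cgammadiff} together with the constancy of $m$ along extremal geodesics.
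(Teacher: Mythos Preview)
Your proposal is correct and follows essentially the same route as the paper: differentiate term by term using Proposition~\ref{prop:Cgammadiff}, invoke the $\SEG_2$-analogue of Proposition~\ref{prop:Cgamma1frac} to turn each $C_{x_0\cup\gamma}(t)$ into $m(x_0\cup\gamma)\,P_t(x_0)$, and then sum the weights $m(x_0\cup\gamma)$ over $\gamma\in\SEG_{2,x}$ to obtain $m(x_0,x)$. Your write-up is in fact more explicit than the paper's about the two bijections and the combinatorial identity $\sum_{\gamma\in\SEG_{2,x}} m(x_0\cup\gamma)=m(x_0,x)$, which is helpful.
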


\textbf{Proof: } When applied to semi-extremal geodesics, Proposition~\ref{prop:Cgammadiff} takes a simpler form. More precisely, if $\gamma^{(2)} \in SE\Gamma_{2,x_0}$, we have
\begin{equation}
\frac{\partial}{\partial t} C_{\gamma^{(2)}}(t) = \sum_{x_{-1} \in \mathcal{E}(x_0)} C_{x_{-1} \cup \gamma^{(2)}}(t).
\end{equation}
On the other hand, by Proposition~\ref{prop:Cgamma1frac}, we have:
\begin{eqnarray*}
 \sum_{\gamma^{(2)} \in SE\Gamma_{2,x_0}} C_t(x_{-1} \cup \gamma^{(2)}) &=& \sum_{\gamma_2 \in SE\Gamma_{2,x_0}} \frac{m(x_{-1} \cup \gamma^{(2)})}{m(x_{-1})} \sum_{\tilde{\gamma}^{(2)} \in SE\Gamma_{2,x_{-1}}} C_{\tilde{\gamma}^{(2)}}(t)\\
 &=& \frac{m(x_{-1},x_0)}{m(x_{-1})} P_t(x_{-1}).
\end{eqnarray*}
Summing this last equation over $x_{-1} \in \mathcal{E}(x_0)$ gives the result. The differential equation for $Q_t(x_0)$ is proven similarly. $\square$

\begin{prop} \label{prop:abexist}
There exist two functions $a,b : G \rightarrow \mathbb{R}$ such that
\begin{equation}
P_t(z) = \frac{1}{m(z)} \sum_{x \leq z} m(x,z) a(x) \frac{t^{d(x,z)}}{d(x,z)!} \ , \ Q_t(z) = \frac{1}{m(z)} \sum_{y \geq z} m(z,y) b(y) \frac{(1-t)^{d(z,y)}}{d(z,y)!}.
\end{equation}
\end{prop}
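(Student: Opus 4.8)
The plan is to recognize that Proposition~\ref{prop:PQPDE} reduces the problem to solving a linear ODE whose coefficient operator is nilpotent, so that the solution is a polynomial in $t$ obtained by exponentiating; the explicit iterated-kernel formulas of Proposition~\ref{prop:KKstarProp} then turn this exponential into exactly the claimed expansions.

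First I would treat $P_t$. By Proposition~\ref{prop:PQPDE} the family $(P_t)$ solves $\frac{\partial}{\partial t} P_t = K P_t$, a linear first-order system on the finite-dimensional space of functions on the (finitely many) vertices involved. Since Proposition~\ref{prop:KKstarProp} asserts that $K$ is nilpotent, with $K^n = 0$ for $n > \diam(G)$, the unique solution is the terminating exponential series $P_t = e^{tK} P_0 = \sum_{n \geq 0} \frac{t^n}{n!} K^n P_0$. I then set $a := P_0$ and expand each term using Proposition~\ref{prop:KKstarProp}, which gives that $K^n(z,x)$ is supported on those $x \leq z$ with $d(x,z) = n$ and equals $m(x,z)/m(z)$ there (using the identification $m(z,x) = m(x,z)$ for $x \le z$, both counting the extremal geodesics through $x$ and $z$). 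Substituting,
\begin{equation*}
P_t(z) = \sum_{n \geq 0} \frac{t^n}{n!} \sum_{\substack{x \leq z \\ d(x,z) = n}} \frac{m(x,z)}{m(z)} a(x) = \frac{1}{m(z)} \sum_{x \leq z} m(x,z)\, a(x)\, \frac{t^{d(x,z)}}{d(x,z)!},
\end{equation*}
since each $x \leq z$ contributes to exactly one index, $n = d(x,z)$, so the double sum collapses to a single sum.

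The argument for $Q_t$ is symmetric but expanded around $t = 1$ rather than $t = 0$, to produce powers of $(1-t)$. Passing to the variable $s := 1-t$ turns $\frac{\partial}{\partial t} Q_t = -K^* Q_t$ into $\frac{\partial}{\partial s} Q = K^* Q$, whose nilpotent-exponential solution is $Q_t = e^{(1-t)K^*} Q_1 = \sum_{n \geq 0} \frac{(1-t)^n}{n!} (K^*)^n Q_1$. Setting $b := Q_1$ and inserting the companion formula $(K^*)^n(z,y) = m(z,y)/m(z)$, supported on $y \geq z$ with $d(z,y) = n$, gives the stated expression for $Q_t$.

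I do not expect a real obstacle here; the only points requiring care are bookkeeping. One is to justify that $e^{tK} P_0$ really is the solution: this is immediate from linearity, finiteness of the dimension, and uniqueness for linear ODEs, with nilpotency guaranteeing the series terminates so no convergence question arises. The other is the index matching, namely that the coefficient of $t^n$ in the exponential is supported precisely on vertices at oriented distance $n$ from $z$, which is exactly what lets me read off $n = d(x,z)$ and collapse the double sum. It is also worth recording the $n=0$ term, $K^0 = \ident$, which contributes $a(z) = P_0(z)$ and matches the formula via $m(z,z) = m(z)$.
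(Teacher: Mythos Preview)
Your proof is correct and follows essentially the same route as the paper: define $a=P_0$ and $b=Q_1$, solve the linear ODEs of Proposition~\ref{prop:PQPDE} by the (terminating) exponential series, and read off the coefficients via the iterated-kernel formulas of Proposition~\ref{prop:KKstarProp}. Your extra remarks on nilpotency, the $n=0$ term, and the collapse of the double sum are helpful elaborations but do not change the underlying argument.
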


\begin{proof} For $x \in G$, let $a(x):=P_0(x)$ be the constant term of the polynomial $t \mapsto P_t(x)$. Using Proposition~\ref{prop:PQPDE} and Proposition~\ref{prop:KKstarProp}, we have 
\begin{eqnarray*}
P_t(z) &=& [\exp(tK)P_0](x) = [\exp(tK)a](z)\\
&=& \sum_{l \geq 0} \frac{t^l}{l!} (K^l a)(z) \\
&=& \sum_{l \geq 0} \sum_{x \in \mathcal{E}^l(z)} \frac{t^l}{l!} \frac{m(x,z)}{m(z)} a(x) \\
&=& \frac{1}{m(z)} \sum_{x \leq z} m(x,z) a(x) \frac{t^{d(x,z)}}{d(x,z)!}.
\end{eqnarray*}
The proof of the second point is quite similar: define $\tilde{Q}_t(z):= Q_{1-t}(z)$ and $b(y) := \tilde{Q}_0(y) = Q_1(y)$. As we have $\frac{\partial \tilde{Q}_t(z)}{\partial t} = (K^* \tilde{Q}_t)(z)$, we use again Proposition~\ref{prop:KKstarProp} to conlude. \end{proof}

\medskip

We are now ready to write the $W_{1,+}$-geodesic $(f_t)$ as a mixture of binomial distributions:

\begin{thm} \label{thm:BBCBinom}
For any couple of vertices $x\leq y \in G$ we define the binomial probability distribution on $\bino_{(x,y),t}$ on $G$, associated to the application $m$, supported on the set of vertices $z \in G$ such that $x \leq z \leq y$, by
\begin{equation} \label{eq:BinoMixture}
\bino_{(x,y),t}(z) := \frac{m(x,z,y)}{m(x,y)} \frac{d(x,y)!}{d(x,z)!d(z,y)!} t^{d(x,z)} (1-t)^{d(x,y)}.
\end{equation}
The $W_{1,+}$-geodesic $(f_t)_{t \in [0,1]}$ is a mixture of such binomial distributions:
\begin{equation} \label{eq:ftgeneralform}
 f_t(\cdot) = \sum_{x \leq y} \frac{m(x,y)}{d(x,y)!} a(x) b(y) \bino_{(x,y),t}(\cdot).
\end{equation}
\end{thm}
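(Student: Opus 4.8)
The plan is to obtain \eqref{eq:ftgeneralform} by a direct substitution, starting from the factorization of $f_t$ as a product of the two functions $P_t$ and $Q_t$. Recall that the proposition preceding Proposition~\ref{prop:PQPDE} gives
\begin{equation*}
f_t(z) = m(z)\,P_t(z)\,Q_t(z),
\end{equation*}
and that Proposition~\ref{prop:abexist} provides the closed forms
\begin{equation*}
P_t(z) = \frac{1}{m(z)} \sum_{x \leq z} m(x,z)\, a(x)\, \frac{t^{d(x,z)}}{d(x,z)!}, \qquad
Q_t(z) = \frac{1}{m(z)} \sum_{y \geq z} m(z,y)\, b(y)\, \frac{(1-t)^{d(z,y)}}{d(z,y)!}.
\end{equation*}
First I would simply multiply these two expressions by $m(z)$. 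Since one factor of $m(z)$ cancels, this yields
\begin{equation*}
f_t(z) = \frac{1}{m(z)} \sum_{x \leq z \leq y} m(x,z)\,m(z,y)\, a(x)\, b(y)\, \frac{t^{d(x,z)}}{d(x,z)!}\, \frac{(1-t)^{d(z,y)}}{d(z,y)!},
\end{equation*}
where the double sum over $x \le z$ and $z \le y$ has been merged into a single sum over the chains $x \leq z \leq y$.

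The key simplification is then to apply Proposition~\ref{prop:Exbiject} to the ordered triple $(x,z,y)$, which reads $m(x,z,y) = m(x,z)\,m(z,y)/m(z)$. Substituting this identity absorbs the remaining factor $1/m(z)$ and produces
\begin{equation*}
f_t(z) = \sum_{x \leq z \leq y} m(x,z,y)\, a(x)\, b(y)\, \frac{t^{d(x,z)}}{d(x,z)!}\, \frac{(1-t)^{d(z,y)}}{d(z,y)!}.
\end{equation*}
It remains only to recognize the summand as the binomial mixture \eqref{eq:ftgeneralform}. Inserting the definition \eqref{eq:BinoMixture} of $\bino_{(x,y),t}(z)$ into the right-hand side of \eqref{eq:ftgeneralform} and using that $\bino_{(x,y),t}$ is supported on $\{z : x \leq z \leq y\}$, one sees the factors $m(x,y)$ and $d(x,y)!$ cancel against those in \eqref{eq:BinoMixture}, leaving precisely the expression above. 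Thus the whole argument is a bookkeeping exercise: the only point requiring care is the reindexing that passes from the double sum with fixed $z$ (over pairs $(x,y)$) to the single outer sum over pairs $(x,y)$ in \eqref{eq:ftgeneralform} with $z$ running over the support of each binomial. There is no genuine analytic obstacle here, since all the hard work has already been done in Propositions~\ref{prop:abexist} and~\ref{prop:Exbiject}.

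Finally, to justify the terminology ``mixture of binomial distributions'' I would check that each $\bino_{(x,y),t}$ is genuinely a probability distribution in $z$. Grouping the vertices $z$ by the value $k = d(x,z)$ and writing $\sum_z \bino_{(x,y),t}(z)$ as $\frac{1}{m(x,y)}\sum_{k} \binom{d(x,y)}{k} t^{k}(1-t)^{d(x,y)-k} \sum_{z : d(x,z)=k} m(x,z,y)$, the normalization reduces to the combinatorial identity $\sum_{z : d(x,z)=k,\ x \le z \le y} m(x,z,y) = m(x,y)$. This in turn follows from the partition of the set of extremal geodesics through $x$ and $y$ according to the unique vertex they pass through at distance $k$ from $x$, that is $E(x,y) = \bigsqcup_{z} E(x,z,y)$, giving $\sum_z m(x,z,y) = m(x,y)$ via Definition~\ref{def:mfromC}. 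Summing the binomial weights over $k$ then yields $1$. Combined with $a(x) = P_0(x) \geq 0$ and $b(y) = Q_1(y) \geq 0$, and with $\sum_z f_t(z)=1$, this shows the coefficients $\frac{m(x,y)}{d(x,y)!} a(x) b(y)$ are nonnegative and sum to one, so \eqref{eq:ftgeneralform} is an honest convex mixture.
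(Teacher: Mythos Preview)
Your argument is correct and follows essentially the same route as the paper: start from $f_t(z)=m(z)P_t(z)Q_t(z)$, insert the closed forms of $P_t$ and $Q_t$ from Proposition~\ref{prop:abexist}, apply Proposition~\ref{prop:Exbiject} to rewrite $m(x,z)m(z,y)/m(z)$ as $m(x,z,y)$, and then identify the result with the binomial mixture~\eqref{eq:ftgeneralform}. Your final paragraph, verifying that each $\bino_{(x,y),t}$ is indeed a probability distribution and that the coefficients form a genuine convex combination, is an additional check that the paper does not carry out explicitly.
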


\begin{proof} The theorem follows from the calculation:
\begin{eqnarray*}
f_t(z) &=& m(z) P_t(z)Q_t(z) \\
&=& \frac{1}{m(z)} \sum_{x,y : x\leq z \leq y} m(x,z) a(x) \frac{t^{d(x,z)}}{d(x,z)!} m(z,y) b(y) \frac{(1-t)^{d(z,y)}}{d(z,y)!} \\
&=& \sum_{x,y : x\leq z \leq y} \frac{m(x,z)m(z,y)}{m(z)} \frac{m(x,y)}{d(x,y)!} a(x) b(y) \bino_{(x,y),t}(z),
\end{eqnarray*}
and from the fact that $\frac{m(x,z)m(z,y)}{m(z)} = m(x,z,y)$ (by Proposition~\ref{prop:Exbiject}).
\end{proof}

\section{Existence of $W_{1,+}$-geodesics}\label{sec:Existence}

In the previous section, we showed that any $W_{1,+}$-geodesic $(f_t)$ can be expressed a mixture of binomial distributions with respect to a certain coupling between $f_0$ and $f_1$. We now turn to the question of the existence of a $W_{1,+}$-geodesic $(f_t)$ joining two fixed probability distributions $f_0$, $f_1$. Through this section, we fix such a couple and endow the underlying graph $G$ with the $W_1$-orientation associated to $f_0,f_1$.

\begin{defi}\label{def:mextended}
Let $m : E(G) \rightarrow \mathbb{R}_+^*$ be satisfying $\nabla m(x) = 0$ for every $x \notin \mathcal{A},\mathcal{B}$. Let $p  \geq 0$ be an integer. We extend $m$ as a function on ordered $(p+1)-uples$ in $G$ by defining:
\begin{itemize}
\item If $p=0$, $m(x) := \sum_{y \in \mathcal{F}(x)} m(x,y)$.
\item If $p \geq 2$ and $\gamma : \{0,\ldots p\} \rightarrow G$ is a geodesic, then 
\begin{equation} \label{eq:mextendedgeodesics}
m(\gamma) := m(\gamma_0,\ldots \gamma_p) := \frac{\prod_{i=0}^{p-1} m(x_i,x_{i+1})}{\prod_{j=1}^{p-1} m(x_j)}
\end{equation}
\item If $p \geq 2$ and $x_0 \leq \cdots \leq x_p$ then  $$m(x_0,\ldots x_p) = \frac{1}{\prod_{j=1}^{p-1} m(x_j)} \prod_{i=1}^{p-1} \sum_{\gamma \in \Gamma_{x_i,x_{i+1}}} m(\gamma) .$$
\end{itemize}
\end{defi}

\begin{remark} The assumption $\nabla m(x)=0$ for $x \notin \mathcal{A},\mathcal{B}$ allows us to write $$\sum_{y \in \mathcal{F}(x)} m(x,y) = m(x) = \sum_{y' \in \mathcal{E}(x)} m(y',x).$$ \end{remark}

\begin{remark} An equivalent way to define the extension of $m$ is to define $m(\gamma)$ on extremal geodesics using equation~\eqref{eq:mextendedgeodesics} and to extend it to general $(p+1)$-uples as in Definition~\ref{def:mfromC}, the quantity $m(\gamma)$ playing the role of $C_\gamma$. \end{remark}

\begin{thm} \label{thm:BinomBBC}
A $W_1$-geodesic $(f_t)$ is a $W_{1,+}$-geodesic if and only if there exists:
\begin{itemize} 
\item A function $m : E(G) \rightarrow \mathbb{R}_+^*$ satisfying $\nabla m(x) = 0$ for $x \notin \mathcal{A},\mathcal{B}$, extended to ordered families of $G$.
\item A couple of non-negative functions $a,b : G \rightarrow \mathbb{R}_+$, 
\end{itemize}
such that equations~\eqref{eq:BinoMixture} and \eqref{eq:ftgeneralform} hold.
\end{thm}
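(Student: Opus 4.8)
The statement is an equivalence, so the plan is to prove the two implications separately; the forward implication is essentially already packaged in Section~\ref{sec:Structure}. Suppose first that $(f_t)$ is a $W_{1,+}$-geodesic. Then I would simply collect the objects built in that section: take $m(x_0x_1) := \sum_{\gamma \in E(x_0,x_1)} C_\gamma$ extended to ordered tuples via Definition~\ref{def:mfromC}, and set $a := P_0$, $b := Q_1$ as in Proposition~\ref{prop:abexist}; then \eqref{eq:BinoMixture} and \eqref{eq:ftgeneralform} are exactly Theorem~\ref{thm:BBCBinom}. It then remains to check the three structural requirements on $(m,a,b)$. Strict positivity $m(x_0x_1)>0$ holds because any oriented edge completes to an extremal geodesic (extend backwards to $\mathcal{A}$ and forwards to $\mathcal{B}$, using that oriented paths are geodesics by Theorem~\ref{prop:GeodG}), so $E(x_0,x_1)\neq\emptyset$ and each $C_\gamma>0$. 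The balance condition $\nabla m(x)=0$ for $x\notin\mathcal{A}\cup\mathcal{B}$ is a restatement of the first two items of Proposition~\ref{prop:KKstarProp}, which give $\sum_{\mathcal{F}(x)} m(x,\cdot)=m(x)=\sum_{\mathcal{E}(x)} m(\cdot,x)$; and $a,b\geq 0$ since $P_0,Q_1$ are non-negative combinations of the positive constants $C_\gamma$.

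For the converse I would run Section~\ref{sec:Structure} backwards. Given abstract data $(m,a,b)$, define $P_t,Q_t$ by the explicit polynomial formulas of Proposition~\ref{prop:abexist}, and then $g_t(x_0x_1):=m(x_0x_1)P_t(x_0)Q_t(x_1)$ and $h_t(x_0x_1x_2):=m(x_0x_1x_2)P_t(x_0)Q_t(x_2)$. I would first recover $f_t=m\,P_t\,Q_t$ by substituting \eqref{eq:BinoMixture} into \eqref{eq:ftgeneralform} and using the three-point identity $m(x,z,y)=m(x,z)m(z,y)/m(z)$ from Definition~\ref{def:mextended}. The Benamou--Brenier equation \eqref{eq:BBgraph} is then immediate from the product shapes, since both sides equal $m(x_0x_1)m(x_1x_2)\,P_t(x_0)P_t(x_1)Q_t(x_1)Q_t(x_2)$ once one invokes $m(x_1)m(x_0x_1x_2)=m(x_0x_1)m(x_1x_2)$, again built into Definition~\ref{def:mextended}.

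The two divergence equations $\partial_t f_t=-\nabla g_t$ and $\partial_t g_t=-\nabla h_t$ I would obtain from the evolution equations $\partial_t P_t=KP_t$ and $\partial_t Q_t=-K^*Q_t$, where $K,K^*$ are the sub-Markov kernels attached to the abstract $m$ exactly as in \eqref{eq:MarkovKdef}--\eqref{eq:MarkovKstardef}. Verifying these evolution equations is the main computation: differentiating the polynomial formula for $P_t$ and re-summing requires the marginalization identity $\sum_{w\in\mathcal{E}(z),\, x\leq w} m(x,w)m(w,z)/m(w)=m(x,z)$, which is precisely what the extension of $m$ is designed to guarantee (it is the $m$-analogue of Proposition~\ref{prop:Exbiject}). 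Once these hold, the divergence equations drop out of the same telescoping computation as in Proposition~\ref{prop:PQPDE}.

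The step I expect to require the most care is the remaining condition, the strict positivity $g_t(x_0x_1)>0$ on every oriented edge, since $a,b$ are only assumed non-negative; this is the one place where the transport structure, rather than pure kernel algebra, genuinely enters the converse. I would argue as follows. Evaluating the mixture at $t=0$ collapses each binomial to a Dirac at its left endpoint, giving $f_0(z)=a(z)\sum_{y\geq z}\frac{m(z,y)}{d(z,y)!}b(y)$, and symmetrically $f_1$ is proportional to $b$; hence $a(z)>0$ whenever $f_0(z)>0$ and $b(y)>0$ whenever $f_1(y)>0$. Now if $(x_0x_1)$ is oriented, the definition of the $W_1$-orientation furnishes a geodesic $\gamma$ through it with $(e_0(\gamma),e_1(\gamma))\in\mathcal{C}(f_0,f_1)$, so $f_0(e_0(\gamma))>0$ and $f_1(e_1(\gamma))>0$, whence $a(e_0(\gamma))>0$ and $b(e_1(\gamma))>0$. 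Since $e_0(\gamma)\leq x_0$ and $x_1\leq e_1(\gamma)$, the corresponding terms force $P_t(x_0)>0$ and $Q_t(x_1)>0$ for $t\in(0,1)$, and therefore $g_t(x_0x_1)=m(x_0x_1)P_t(x_0)Q_t(x_1)>0$, completing the four conditions of Definition~\ref{def:BBCurves}.
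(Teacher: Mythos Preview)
Your proposal is correct and follows essentially the same route as the paper: for the forward direction you both invoke Theorem~\ref{thm:BBCBinom} and check the structural properties of $(m,a,b)$, and for the converse you both rebuild $P_t,Q_t$ from the polynomial formulas, verify the kernel evolution equations, and read off $g_t,h_t$ and the Benamou--Brenier identity from the product structure together with $m(x_1)m(x_0x_1x_2)=m(x_0x_1)m(x_1x_2)$.

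The one substantive difference is your treatment of strict positivity of $g_t$. The paper simply asserts ``the positivity of $P_t$ and $Q_{1-t}$ implies the positivity of $g_t(x_0x_1)$'', without explaining why $P_t(x_0)$ and $Q_t(x_1)$ are \emph{strictly} positive on every oriented edge when $a,b$ are merely non-negative. Your argument---evaluating the mixture at $t=0,1$ to get $a>0$ on $\supp(f_0)$ and $b>0$ on $\supp(f_1)$, and then using the very definition of the $W_1$-orientation to produce, for each oriented edge $(x_0x_1)$, vertices $x\leq x_0$ and $y\geq x_1$ with $a(x),b(y)>0$---is a genuine improvement: it actually closes the gap that the paper leaves implicit, and it correctly identifies this as the one place in the converse where the transport structure (rather than pure $m$-algebra) is needed.
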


\begin{proof} The ``only if'' part of Theorem~\ref{thm:BinomBBC} is exactly Theorem~\ref{thm:BBCBinom}. Indeed, the restriction to $E(G)$ of the function $m$ constructed from a $W_{1,+}$-geodesic $(f_t)$ satisfies $\nabla m=0$ outside of $\mathcal{A} \cup \mathcal{B}$, and using Definition~\ref{def:mextended} to extend this restriction to ordered families allows us to recover the original $m$. Moreover, the functions $a$ and $b$ introduced in Proposition~\ref{prop:abexist} are non-negative: $a(x)$ is the constant term of the polynomial $P_t(x)$, which is non-negative for every $t \in [0,1]$, and the same goes for $b(x)$.

\medskip

Conversely, let $(f_t)$ be a curve satisfying the assumptions of Theorem~\ref{thm:BinomBBC}. We define the polynomial functions $$P_t(z) := \frac{1}{m(z)} \sum_{x \leq z} m(x,z) a(x) \frac{t^{d(x,z)}}{d(x,z)!} \ , \ Q_t(z) := \frac{1}{m(z)} \sum_{y \geq z} m(z,y) b(y) \frac{t^{d(z,y)}}{d(z,y)!}.$$ Direct calculations show that $f_t(z) = m(z) P_t(z) Q_{1-t}(z)$. Moreover, using the definition of $m(x,z)$ and $m(z,y)$, one can prove easily that $P_t$ and $Q_t$ satisfy the differential equations
$$\frac{\partial }{\partial t} P_t(z) = \sum_{z_0 \in \mathcal{E}(z)} \frac{m(z_0,z)}{m(z_0)} P_t(z_0) \ , \ \frac{\partial }{\partial t} Q_t(z) = \sum_{z_1 \in \mathcal{F}(z)} \frac{m(z,z_1)}{m(z_1)} Q_t(z_1).$$
This allows us to write $\frac{\partial}{\partial t}f_t(z) = -\nabla g_t(z)$ where we define $$g_t(x_0x_1) := m(x_0,x_1) P_t(x_0)Q_{1-t}(x_1).$$ Similarly, defining $h_t(x_0x_1x_2) := m(x_0,x_1,_2) P_t(x_0)Q_{1-t}(x_2)$ we have $\frac{\partial}{\partial t} g_t(x_0x_1) = -\nabla h_t(x_0x_1)$. The positivity of $P_t$ and $Q_{1-t}$ implies the positivity of $g_t(x_0x_1)$. Moreover, the formula $$m(x_1) m(x_0,x_1,x_2) = m(x_0,x_1)m(x_1,x_2)$$ implies $$f_t(x_1) h_t(x_0x_1x_2) = g_t(x_0x_1)g_t(x_1x_2),$$ which shows that $(f_t)$ is a $W_{1,+}$-geodesic. 
\end{proof}

The task of finding a $W_{1,+}$-geodesic joining $f_0$ to $f_1$ is simplified by Theorem~\ref{thm:BinomBBC} because it turns it into the static problem of finding a coupling $\pi$ between $f_0$ and $f_1$ such that $\pi(x,y) := \frac{m(x,y)}{d(x,y)!} a(x) b(y) 1_{x\leq y}$ for a couple of functions $a(x)$, $b(y)$ defined on $G$ and for a function $m$ constructed in Definition~\ref{def:mextended}.

\medskip

This method can be used to prove the existence of $W_{1,+}$-geodesics with prescribed initial and final distributions:

\begin{thm}
Let $f_0, f_1 \in \mathcal{P}(G)$ be finitely supported. Then there exists a $W_{1,+}$-geodesic between $f_0$ and $f_1$.
\end{thm}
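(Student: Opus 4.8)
The plan is to convert the dynamic existence question into the \emph{static} problem isolated immediately after Theorem~\ref{thm:BinomBBC}: by that theorem it suffices to produce a positive flow $m : E(G) \to \mathbb{R}_+^*$ with $\nabla m(x)=0$ for $x \notin \mathcal{A}\cup\mathcal{B}$, together with two non-negative functions $a,b : G \to \mathbb{R}_+$, such that
\begin{equation*}
\pi(x,y) := \frac{m(x,y)}{d(x,y)!}\,a(x)\,b(y)\,\mathbf{1}_{x\le y}
\end{equation*}
lies in $\Pi(f_0,f_1)$. Indeed, with $P_t,Q_t$ as in the proof of Theorem~\ref{thm:BinomBBC} one has $P_0(z)=a(z)$ and $Q_0(z)=b(z)$, so the endpoint identities $f_0=\lim_{t\to0}f_t$ and $f_1=\lim_{t\to1}f_t$ read exactly $\sum_{y\ge z}\pi(z,y)=f_0(z)$ and $\sum_{x\le z}\pi(x,z)=f_1(z)$. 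Thus prescribing the endpoints of the curve is literally prescribing the marginals of $\pi$, and $f_t=\sum_{x\le y}\frac{m(x,y)}{d(x,y)!}a(x)b(y)\,\bino_{(x,y),t}$ is automatically a probability distribution as a convex combination of the $\bino_{(x,y),t}$.

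First I would fix the flow $m$. The oriented graph $(G,\rightarrow)$ is a finite directed acyclic graph with sources $\mathcal{A}$ and sinks $\mathcal{B}$ (acyclicity is Theorem~\ref{prop:GeodG}; finiteness follows from local finiteness of $G$ together with the finite support of $f_0,f_1$). On such a graph a strictly positive conservative flow always exists: for instance $m(x,y):=\#\{\gamma\in\EG:\gamma\ \text{traverses}\ (x,y)\}$ is positive on every oriented edge, since any oriented edge extends backward to $\mathcal{A}$ and forward to $\mathcal{B}$ into an extremal geodesic, and it satisfies $\sum_{x_0\in\mathcal{E}(x)}m(x_0,x)=\sum_{x_1\in\mathcal{F}(x)}m(x,x_1)$ at every interior $x$ because each extremal geodesic through $x$ enters and leaves through exactly one edge. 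I then extend $m$ to ordered tuples via Definition~\ref{def:mextended}, noting $m(x,y)>0$ whenever $x\le y$.

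With $m$ fixed, write $R(x,y):=\frac{m(x,y)}{d(x,y)!}\mathbf{1}_{x\le y}$, a finite positive kernel whose support is exactly $\{(x,y):x\le y\}$. Producing $a,b$ is then a matrix-scaling (Schr\"odinger) problem: find non-negative diagonal scalings sending $R$ to a coupling of $f_0,f_1$. Feasibility is supplied by optimal transport: any $\pi_0\in\Pi_1(f_0,f_1)$ is supported in $\{(x,y):x\le y\}$ — if $\pi_0(x,y)>0$ then $(x,y)\in\mathcal{C}(f_0,f_1)$ and every geodesic from $x$ to $y$ is oriented, so $x\le y$ — hence $\pi_0\ll R$ and the set $\{\pi\in\Pi(f_0,f_1):\pi\ll R\}$ is a non-empty compact convex subset of a finite-dimensional simplex. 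I would then take $a,b$ from the relative-entropy projection of $R$ onto this set (the Schr\"odinger bridge in the sense of L\'eonard), i.e. the unique minimizer $\pi^\star$ of $\sum_{x\le y}\pi(x,y)\log\frac{\pi(x,y)}{R(x,y)}$; the first-order conditions attached to the two marginal constraints force the Gibbs form $\pi^\star(x,y)=a(x)R(x,y)b(y)$, with $a>0$ precisely on $\supp(f_0)$ and $b>0$ precisely on $\supp(f_1)$. Equivalently, one runs the Sinkhorn iteration and argues convergence.

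It remains to check that the curve $(f_t)$ of \eqref{eq:ftgeneralform} is a genuine $W_1$-geodesic, so that Theorem~\ref{thm:BinomBBC} applies. Here the orientation pays off: the potential $u$ built in the proof of Theorem~\ref{th:W1OrientGeod}, which increases by exactly $1$ along each oriented edge, is $1$-Lipschitz and satisfies $u(y)-u(x)=d(x,y)$ for $x\le y$; since $\pi^\star$ is supported in $\{x\le y\}$, Kantorovich duality gives $\sum\pi^\star(x,y)d(x,y)=\int u\,df_1-\int u\,df_0\le W_1(f_0,f_1)$, so $\pi^\star$ is $W_1$-optimal and $(f_t)$ is a $W_1$-geodesic. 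The positivity $g_t>0$ demanded by Definition~\ref{def:BBCurves} then holds for $t\in(0,1)$: from $g_t(x_0x_1)=m(x_0,x_1)P_t(x_0)Q_{1-t}(x_1)$, for any oriented edge the $W_1$-orientation yields $x'\le x_0\le x_1\le y'$ with $x'\in\supp(f_0)$, $y'\in\supp(f_1)$, so $a(x')>0$ makes $P_t(x_0)>0$ and $b(y')>0$ makes $Q_{1-t}(x_1)>0$. I expect the main obstacle to be exactly the solvability of the Schr\"odinger system — showing the entropy minimizer (or Sinkhorn limit) exists and factorizes through potentials that are finite and strictly positive on $\supp(f_0)$ and $\supp(f_1)$ — rather than any of the surrounding verifications, which are routine once $m,a,b$ are in hand.
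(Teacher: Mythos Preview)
Your approach is essentially the paper's: fix a positive conservative $m$, then minimize the relative entropy $\sum_{x\le y}\pi(x,y)\log\bigl(\pi(x,y)/R(x,y)\bigr)$ over the convex set of $W_1$-optimal couplings and read off $a,b$ from the Lagrange multipliers attached to the marginal constraints. The ``main obstacle'' you flag---that the minimizer has full support on $\{x\le y\}$ so that the Gibbs factorization $\pi^\star(x,y)=a(x)R(x,y)b(y)$ holds with finite positive potentials---is precisely what the paper settles with a short boundary argument: along any segment from $\partial\Pi_1(f_0,f_1)$ into the interior one has $J'(t)\to-\infty$ as $t\to0$, so the minimum cannot be attained on the boundary.
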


\begin{proof} Let $m:E(G) \rightarrow \mathbb{R}_+^*$ be any positive function with $\nabla m(x) =0$ for every $x \notin \mathcal{A},\mathcal{B}$, and extended to ordered families of $G$. We set $c(x,y) := \frac{m(x,y)}{d(x,y)!}$. By Theorem~\ref{thm:BinomBBC}, it suffices to prove the existence of a coupling $\pi \in \Pi_1(f_0,f_1)$ such that $\pi(x,y) = c(x,y)a(x)b(y) 1_{x \leq y}$ for a couple of positive $a,b : G \rightarrow \mathbb{R}$. 

\medskip

We will adopt the following point of view on the set $\Pi_1(f_0,f_1)$:

\medskip

Let $\mathcal{D} := \{(x,y) \in G \times G \ | \ x \leq y\}$. In the space $\mathbb{R}^\mathcal{D}$ with the usual sclar product, we consider the particular families of vectors $(j_{0,x})_{x \in G}$ and $(j_{1,y})_{y \in G}$ defined by
\begin{equation*}
\forall (x,y) \in \mathcal{D} \ , \ j_{0,x_0}(x,y) := 1_{x=x_0} \ , \ j_{1,y_0}(x,y) := 1_{y=y_0}.
\end{equation*}
If for every $(x,y) \in \mathcal{D}$ we have  $x_0 \neq x$ then we set $j_{0,x_0}=0$.

\medskip

If $\pi \in \mathbb{R}^\mathcal{D}$, we have 
\begin{equation*}
\pi \cdot j_{0,x_0} := \sum_{y \geq x_0} \pi(x_0,y) \ , \  \pi \cdot j_{1,y_0} := \sum_{x \leq y_0} \pi(x,y_0).
\end{equation*}

In particular, we have
\begin{equation*}
\Pi_1(f_0,f_1) := \mathbb{R}_+^{\mathcal{D}} \cap \left(\bigcap_{x_0 \in G} \{\pi : \pi \cdot j_{0,x_0} = f_0(x_0) \}\right) \cap \left( \bigcap_{y_0 \in G} \{\pi : \pi \cdot j_{1,y_0} = f_1(y_0) \} \right).
\end{equation*}

In other words, $\Pi_1(f_0,f_1)$ is seen as the intersection of the ``quadrant'' $\mathbb{R}_+^{\mathcal{D}}$ with an affine subspace of $\mathbb{R}^{\mathcal{D}}$ directed by the vector subspace $V^\perp$, where $V$ is the vector space generated by the families $(j_{0,x})_{x \in G}$ and $(j_{1,y})_{y \in G}$. 

\medskip

Depending on the dimension of $\Pi_1(f_0,f_1)$ as a subset of an affine subspace of $\mathbb{R}^{\mathcal{D}}$, we will consider two cases:

\begin{enumerate}
\item The dimension of $\Pi_1(f_0,f_1)$ is zero. In this case, the vector space $V$ is $\mathbb{R}^{\mathcal{D}}$. In particular, the vector $l \in \mathbb{R}^{\mathcal{D}}$, with components $l(x,y) := \frac{\pi(x,y)}{c(x,y)}$ for every couple $x\leq y \in \mathcal{D}$, can be written under the form $$l(x,y) = \sum_{x \in G} A(x) j_{0,x} + \sum_{y \in G} B(y) j_{1,y}$$ for a unique couple of functions $A,B$ defined on $G$. Considering the exponential of each side proves that $\pi$ can be written under the form $\pi(x, y) := c(x,y) a(x)b(y)1_{x \leq y} $ with $a(x) := \exp(A(x))$ and $b(y) := \exp(B(y))$.
\item The dimension of $\Pi_1(f_0,f_1)$ is positive. In this case we will use the fact that the interior $\Pi_1(f_0,f_1)$ is
non-empty and equal to the set of fully supported $W_1$-optimal couplings:
$$\Pi_1(f_0,f_1)^\circ = \{\pi \in \Pi_1(f_0,f_1) \ : \ \forall (x, y) \in \mathcal{D}, \pi(x, y) > 0\}.$$
The boundary of $\Pi_1(f_0,f_1)$ is thus described by:
$$\partial\Pi_1(f_0,f_1)= \{\pi \in \Pi_1(f_0,f_1) \ : \ \exists (x, y) \in \mathcal{D}, \pi(x, y) = 0\}.$$
\end{enumerate}

\medskip

We consider the mapping $J : \mathbb{R}_+^{\mathcal{D}} \rightarrow \mathbb{R}$ defined by 
\begin{equation} \label{eq:EntroFunct}
J(\pi) := \sum_{(x,y) \in \mathcal{D}} \pi(x,y) \log \left( \frac{\pi(x,y)}{c(x,y)} \right)-\pi(x,y),
\end{equation}
where the variables are denoted by $\pi(x,y)$, for $x\leq y$. The function $J$ is clearly continuous on $\mathbb{R}_+^{\mathcal{D}}$ and smooth on $\left(\mathbb{R}_+^* \right)^{\mathcal{D}}$. Moreover, we have:
\begin{equation} \label{eq:Jgradient}
\frac{\partial J}{\partial \pi(x,y)} = \log \left( \frac{\pi(x,y)}{c(x,y)} \right).
\end{equation}
The Hessian of $J$ is thus a diagonal matrix with positive coefficients $\left(\frac{1}{\pi(x,y)}\right)_{(x,y) \in \mathcal{D}}$, so $J$ is strictly convex on $\left(\mathbb{R}_+^* \right)^{\mathcal{D}}$.

\medskip

The set $\Pi_1(f_0,f_1)$ being compact, the infimum of $J$ on $\Pi_1(f_0,f_1)$ is attained for some coupling $\tilde{\pi}$. As $J$ is striclty convex and $\Pi_1(f_0,f_1)$ is a convex subset of $\mathbb{R}^\mathcal{D}$, we know that $\tilde{\pi}$ is unique and that we have either $\tilde{\pi} \in \partial \Pi_1(f_0,f_1)$ or $\tilde{\pi} \in \Pi_1(f_0,f_1)^\circ$ and in this second case $\tilde{\pi}$ is a critical point for the restriction to $\Pi_1(f_0,f_1)$ of the application $J$.

\medskip

Let us prove that $\tilde{\pi} \in \Pi_1(f_0,f_1)^\circ$: we consider a segment $\pi_t := (1-t)\pi_0+t\pi_1$, where $\pi_0 \in \partial \Pi_1(f_0,f_1)$ and $\pi_1 \in \Pi_1(f_0,f_1)^\circ$. Each $\pi_t$ is in $\Pi_1(f_0,f_1)$, by  convexity. The function $J(t) := J(\pi_t)$ is continuous on $[0,1]$, smooth on $]0,1[$ and we have:
\begin{equation*} 
J'(t) = \sum_{(x,y) \in \mathcal{D}} (\pi_1(x,y)-\pi_0(x,y)) \log \left( \pi_t(x,y) \frac{d(x,y)!}{m(x,y)} \right).
\end{equation*}
As $\pi_0 \in \partial \Pi_+$, there exists $(x_0,y_0) \in \mathcal{D}$ such that $\pi_0(x_0,y_0)=0$ and we have
\begin{equation*}
\lim_{t \rightarrow 0} (\pi_1(x_0,y_0)-\pi_0(x_0,y_0)) \log \left( \pi_t(x_0,y_0) \frac{d(x_0,y_0)!}{m(x_0,y_0)} \right)= - \infty,
\end{equation*}
so we have $\lim_{t \rightarrow 0} J'(t) = -\infty$. The infimum of $J$ on $\Pi_1(f_0,f_1)$ is thus not attained on $\partial \Pi_1(f_0,f_1)$.

\medskip

We have proven the existence of a unique critical point $\tilde{\pi} \in \Pi(f_0,f_1)^\circ$ for the restriction to $\Pi_1(f_0,f_1)$ of $J$. As $\Pi_1(f_0,f_1)$ is a subset of an affine space directed by a vector subspace $V^\perp$, we know that $$\grad_{\tilde{\pi}} J \in V.$$ In other terms, 
\begin{equation} \label{eq:PiPlusLagrange}
\grad_{\pi_0}(J) = \sum_{x \in G} A(x) j_{0,x} + \sum_{y \in G} B(y) j_{1,y} 
\end{equation}
for a couple of functions $A,B : G \rightarrow \mathbb{R}$. Due to the particular form taken by $j_{0,x}$ and $j_{1,y}$, Equation~\eqref{eq:PiPlusLagrange} can be rewritten in a simple way: $$\forall (x,y) \in \mathcal{D} \ , \ \grad_{\pi_0}(J)(x,y) = A(x)+B(y).$$
But equation~\eqref{eq:Jgradient} gives an explixcit formula for $\grad_{\pi_0}(J)(x,y)$, which allows us to write, for $(x,y) \in \mathcal{D}$:
\begin{eqnarray*}
\frac{\tilde{\pi}(x,y)}{c(x,y) } &=& \exp \left(\grad_{\pi_0}(J)(x,y)\right) \\
 &=& \exp(A(x)+B(y)) = a(x)b(y),
\end{eqnarray*}
where $a(x) := \exp(A(x))$ and $b(y) := \exp(B(y))$. Theorem~\ref{thm:BinomBBC} gives the a $W_{1,+}$-geodesic $(f_t)$ between $f_0$ and $f_1$ constructed from the function $m$ and the coupling $\tilde{\pi}$
\end{proof}

\begin{remark}
The particular form taken by $W_{1,+}$-geodesics (see Equation~\eqref{eq:ftgeneralform}) and the minimisation problems associated by the functionals~\eqref{eq:EntroFunct} and~\eqref{eq:W1plusBB}, are reminiscent of the theory of Entropic Interpolations, constructed in a recent series of articles by L\'{e}onard. A survey of the main results of this theory is found in~\cite{LeonardSurvey}. A construction of entropic interpolations and a discussion of the cases where they can be described as mixtures of binomials is found in~\cite{LeoLazy}. Another paper, see~\cite{LeoConvex}, addresses the question of the convexity of entropy along such interpolations. 

\medskip

A major difference between these two kinds of interpolations lies in their construction: in order to define an entropic interpolation on a graph $G$, one requires an underlying Markov chain to which is canonically associated a positive measure $R^{01}$ on the set of couples of vertices $x,y \in G$. On the other hand, the definition of a $W_{1,+}$-interpolation does not require an underlying Markov chain. It only relies on the ``metric-measure'' properties of the graph $G$, endowed with its counting measure.  However, to each $W_{1,+}$-geodesic is associated a function $m$ on the ordered subsets of $G$, which is used to construct sub-Markov kernels.

\medskip

A complete understanding of the links between entropic interpolations and $W_{1,+}$-geodesics, and more especially between the measure $R^{01}$ of entropic interpolations and the function $m$ of $W_{1,+}$-geodesics, is still under investigation.
\end{remark}

\end{document}